\numberwithin{equation}{section}
\numberwithin{figure}{section}
\theoremstyle{plain}
\newtheorem{thm}{\protect\theoremname}
  \theoremstyle{plain}
  \newtheorem{lem}[thm]{\protect\lemmaname}
  \newtheorem{cor}[thm]{\protect\corollaryname}
  \theoremstyle{remark}
  \newtheorem{rem}[thm]{\protect\remarkname}
  \theoremstyle{plain}
  \newtheorem{prop}[thm]{\protect\propositionname}
  \theoremstyle{definition}
  \newtheorem{defn}[thm]{\protect\definitionname}
\numberwithin{thm}{section}
  \providecommand{\definitionname}{Definition}
  \providecommand{\lemmaname}{Lemma}
  \providecommand{\propositionname}{Proposition}
  \providecommand{\corollaryname}{Corollary}
  \providecommand{\remarkname}{Remark}
\providecommand{\theoremname}{Theorem}
\newcommand{\R}{\mathbb{R}}
\newcommand{\C}{\mathbb{C}}
\newcommand{\Z}{\mathbb{Z}}
\newcommand{\N}{\mathbb{N}}
\newcommand{\bbP}{\mathbb{P}}
\newcommand{\Pb}[1]{\mathbb{P}\left[#1\right]}
\newcommand{\E}[1]{\mathbb{E}\left[#1\right]}
\newcommand{\PT}[1]{\mathbb{P}_{n}\left[#1\right]}
\newcommand{\PTa}[1]{\mathbb{P}_{n,\alpha}\left[#1\right]}
\newcommand{\ETa}[1]{\mathbb{E}_{n,\alpha}\left[#1\right]}
\newcommand{\PTaS}{\mathbb{P}_{n,\alpha}}
\newcommand{\caO}{{\mathcal O}}
\newcommand{\bsq}{{\boldsymbol q}}
\begin{document}

\title[Random permutations without macroscopic cycles]{Precise asymptotics of longest cycles in random permutations without macroscopic cycles}

\author{Volker Betz \and Julian M\"uhlbauer \and Helge Sch\"afer \and Dirk Zeindler}

\subjclass[2010]{60F17, 60F05, 60C05}

\begin{abstract}
We consider Ewens random permutations of length $n$
conditioned to have  no cycle longer than $n^\beta$ with 
$0<\beta<1$ and to study the asymptotic behaviour as $n\to\infty$. 
We obtain very precise information on the joint distribution of 
the lengths of the longest cycles; in particular we prove a 
functional limit theorem where the 
cumulative number of long cycles converges to a Poisson 
process in the suitable scaling. Furthermore, we prove convergence 
of the total variation distance between joint cycle counts 
and suitable independent Poisson random variables up to a 
significantly larger maximal cycle length than previously known. 
Finally, we remove a superfluous assumption from a central limit 
theorem for the total number of cycles proved in an earlier paper.

%
%
%
%
%
\end{abstract}

\maketitle

\section{Introduction}

In uniform random permutations, long cycles occupy almost all the available space. Indeed, it is a standard 
textbook exercise to show that in a permutation of length $n$, the probability to find an index $i$ in a cycle 
of length $k$ is equal to $1/n$, which in turn means that cycles of a length below volume order play no role 
asymptotically as $n \to \infty$. Of course, much more is known about uniform (and Ewens) random permutations, 
including the precise distribution of long and short cycles. We refer to 
\cite{ABT02} and the references therein. 
	
It is interesting to see how the behaviour of random permutations changes when the uniform measure is changed in 
a way that favours short cycles. Various such models have been studied in recent years. Many of them are 
motivated by the model of spatial random permutations \cite{BeUe09}, which by its close connections to Bose-Einstein condensation \cite{Ue06} has a significant physical relevance. In this model, a spatial 
structure is superimposed on the permutations, and the importance of that spatial structure is measured by an 
order parameter which physically is the temperature. It is conjectured that this order parameter mediates a 
phase transition between a regime of only short cycles and a regime of coexistence of long and short cycles. 
Despite some successes in the explicitly solvable annealed case without interaction between different cycles \cite{BeUe10}, 
and significant recent progress (using the method of reflection positivity) in a closely 
related model with such interaction \cite{LeTa19, Ta19}, many of the most relevant questions 
in spatial random permutations remain to be answered. 

A somewhat more direct and in general easier to analyse way to suppress long cycles is to introduce
cycle weights or hard constraints on cycle numbers. Cycle weights  appear in an (uncontrolled) 
approximation of the interacting Bose gas by a variant of the free one \cite{BeUe10b}, but have also been
studied intensively in their own right, both in cases 
where the cycle weights do not depend on the system size $n$ 
\cite{BeUeVe11, ErUe11}, and in cases where they do 
\cite{BoZe14, ElPe19}. In the latter case, it has been shown in the 
cited papers that one recovers the model treated in \cite{BeUe10} by 
a suitable choice of cycle weights, and the methods of analytic 
combinatorics used in \cite{BoZe14, ElPe19} yield very precise
information about the asymptotic cycle distribution in various regimes.

The present paper deals with the other option of constraining 
permutations, namely to completely disallow certain cycle lengths. 
Again, a distinction has to be made between cases where the set of 
disallowed cycle lengths is independent of the permutation length 
$n$, and those where it depends on $n$. In the first case, a 
significant amount of information has been obtained in the works of 
Yakymiv (see e.g. \cite{Ya09a,Ya10a}); our interest lies 
in the second case. Using precise asymptotic results by 
Manstavi{\v{c}}ius and Petuchovas \cite{MaPe16}, in \cite{BeSc17, BeScZe17} we 
investigated the case where a permutation of 
length $n$ is prevented from having any cycles above a threshold 
$\alpha(n)$ that grows strictly slower than volume order. 
While the results in these papers were 
reasonably detailed, some interesting questions and fine details 
have been left out. 

It is the purpose of the present paper to settle a significant
portion of them. We will describe our results in detail in the
next section. Here, we only briefly sketch what is new. 

One difference to \cite{BeSc17} is that 
we generalise the base model we constrain, from uniform 
random permutations to the model of Ewens permutations. The latter originally 
appeared in population genetics, see \cite{Ew72}, but has now become a 
rather standard model of random permutations. It shares many 
features and techniques with uniform permutations, and classical 
results about uniform and Ewens random permutations include
convergence of joint 
cycle counts towards independent Poisson random variables in total 
variation distance \cite{ArTa92c}, the convergence of the renormalized cycle structure towards a 
Poisson-Dirichlet distribution {\cite{Ki77, ShVe77}},  and a central limit theorem for 
cumulative cycle counts \cite{DePi85}.

In the context of the methods we use, the difference between the 
Ewens measure and uniform random permutations is not large, see \cite{Sc18} for details. What should be considered the main
contribution of the present paper compared to \cite{BeSc17,BeScZe17} are the following three items: 
firstly,  
we obtain much more precise asymptotics for the distribution of the longest cycles in various regimes 
(Propositions \ref{prop:LongestDiv} and \ref{prop:LongestConv}, and Theorem \ref{thm:Longest0Poissonprocess}); secondly, we extend the validity of the 
joint Poisson approximation (in variation distance) to the whole
regime of cycles of length $(o(\alpha(n))$ (Theorem \ref{thm:main_dtv}).
Finally, we remove a spurious additional assumption for the 
central limit theorem for cycle numbers that was present in 
\cite{BeScZe17}, see Theorem \ref{thm:Haupt}. 

The paper is organised as follows: in Section \ref{sec:results},
we introduce the model, give our results and compare them to 
previously existing ones. In Section \ref{sec:proofs}, we prove 
those results.

\newpage

\section{Model and Results} \label{sec:results}

\subsection{The symmetric group and the Ewens measure}
\label{sec:sym_group}

For $n\in\N$, let $S_{n}$ be the group of all permutations of the set $\{1,\ldots,n\}$.
For $\sigma\in S_n$ and $m\in\N$, we denote by $C_m(\sigma)$ the number of cycles of length $m$ 
in the cycle decomposition of $\sigma$ into disjoint cycle.
Note that we typically write $C_m$ instead of $C_m(\sigma)$.
Let $n \mapsto \alpha(n)$ satisfy the condition 
\begin{equation}
\label{eq:condition on alpha}
n^{a_1} \leq \alpha(n) \leq n^{a_2}
\end{equation}
with $a_{1},a_{2}\in\left(0,1\right)$.
We denote by $S_{n,\alpha}$ the subset of $S_{n}$ of all permutations $\sigma$ for which all cycles 
in the cycle decomposition of $\sigma$ have length at most $\alpha(n)$. 
In other words, $\sigma\in S_{n,\alpha}$ if and only if $C_m(\sigma) =0$ for $m > \alpha(n)$.
%
%
%
%
%
%
%
%
%
%
%
 For $\vartheta >0$, the Ewens measure on $S_n$ with parameter $\vartheta$ is defined as 
 \begin{align}
  \PT{\sigma}
  :=
  \frac{\prod_{m=1}^n\vartheta^{C_m(\sigma)}}{\vartheta(\vartheta+1)\cdots(\vartheta +n-1)}.
  \label{eq:def_Ewens_measure}
 \end{align}
Note that the case $\vartheta =1$ corresponds to the uniform measure.
Further, let $\mathbb{P}_{n,\alpha}$ denote the measure on $S_{n,\alpha}$ obtained by conditioning $\mathbb{P}_{n}$ on $S_{n,\alpha}$, i.e. 
\begin{align}
 \PTa{A}:= \PT{A|S_{n,\alpha}} 
\qquad
\text{ for all }
A\subset S_{n,\alpha}.
\end{align}
Inserting the definition $\mathbb{P}_{n}$, we obtain for $\sigma\in S_{n,\alpha}$ that
 \begin{align}
  \PTa{\sigma}
  =
  \frac{\prod_{m=1}^n\vartheta^{C_m(\sigma)}}{Z_{n,\alpha}\, n!}
  \quad 
  \text{ with }
  \quad 
  Z_{n,\alpha} = \frac{1}{n!}\sum_{\sigma\in S_{n,\alpha}} \prod_{m=1}^n\vartheta^{C_m(\sigma)}.
  \label{eq:def_Ewens_measure_alpha}
 \end{align}
Also, we write $\mathbb{E}_{n}$ for the expectation with respect to $\mathbb{P}_{n}$
and $\mathbb{E}_{n,\alpha}$ for the expectation with respect to $\mathbb{P}_{n,\alpha}$.
%
%
%
%
%
\subsection{Notation}
If two sequences $(a_n)$ and $(b_n)$ are asymptotically 
equivalent, i.e.\ if $\lim_{n\to\infty} a_n/b_n = 1$, we write 
$a_n \sim b_n$. 
Further, we write $a_n\approx b_n$ when there exist constants $c_1,c_2>0$ such that
\begin{align}
 c_1 b_n \leq a_n \leq c_2 b_n
\end{align}
for large $n$. We also use the usual $\caO$ and $o$ notation,
i.e. $f(n) = \caO(g(n))$ means that there exists some constant 
$c > 0$ so that $|f(n)| \leq c |{g(n)}|$ for large $n$,
while $f(n) = o(g(n))$ means that for all $c>0$ there exists 
$n_c \in \N$ so that the inequality $|f(n)| \leq c |g(n)|$
holds for all $n > n_c$. 
We further say that
$$
f_n(t)
=
\mathcal{O}\left(g_n(t)\right)\text{ uniformly in }t\in T_n \text{ as }n\to\infty
$$
if there are constants $c,N>0$ such that
$
\sup_{t\in T_n} |f_n(t)|\leq c |g_n(t)|
$
for all $n\geq N$. 

\subsection{Expected cycle counts} 
Here we recall some of the results from \cite{BeScZe17} and 
\cite{Sc18} that are crucial for the following.

Let $x_{n,\alpha}$ be the unique positive solution of the equation 
\begin{equation}
n=\vartheta \sum_{j=1}^{\alpha(n)}x_{n,\alpha}^{j},
\label{eq:StaSad}
\end{equation}
and
\begin{align}
\mu_m\left(n\right)
:=
\vartheta\frac{x_{n,\alpha}^{m}}{m}.
\label{eq:def_mu_n}
\end{align}
For the case where $m$ is replaced by an integer-valued 
sequence $(m(n))_{n \in 
\mathbb{N}}$, we simplify notation and write $\mu_{m(n)}$ instead of 
$\mu_{m(n)}(n)$. For any such sequence that satisfies 
$m\left(n\right)\leq\alpha\left(n\right)$, we have 
\begin{equation} \label{eqn:exp asympt}
\ETa{C_{m\left(n\right)}}
\sim
\mu_{m\left(n\right)}
\qquad \text{ as } n\to\infty.
\end{equation}
%
This was proven for $\vartheta =1$ in \cite[Proposition 2.1]
{BeScZe17}, and for $\vartheta \neq 1$ in \cite{Sc18} along the 
same lines. In view of \eqref{eqn:exp asympt} it is clear that we 
are interested in information about the asymptotics of solutions 
to equations like \eqref{eq:StaSad}. 
The following result provides it: 

\begin{lem}
\label{lem:saddle_point_with_c}
Let $0<c_1<c_2<\infty$ be fixed, but arbitrary real numbers. 
For $c \in[c_1,c_2]$, let $x_{n,\alpha}(c)$  be the solution of
\begin{align}
cn = \vartheta\sum_{j=1}^{\alpha(n)} \big( x_{n,\alpha}(c) \big)^j.
\label{eq:def_xn(c)}
\end{align}
We then have uniformly in $c \in[c_1,c_2]$ as $n\to\infty$ 
\begin{equation}
\alpha\left(n\right)\log\left(x_{n,\alpha}(c)\right)
=
\log\left(\frac{cn}{\vartheta\alpha\left(n\right)}\log\left(\frac{cn}{\vartheta\alpha\left(n\right)}\right)\right)
+
\caO\left(\frac{\log\left(\log\left(n\right)\right)}{\log\left(n\right)}\right).
\label{eq:StaSadAs}
\end{equation}
In particular,
$$x_{n,\alpha}(c)\geq1, \ 
\lim_{n\rightarrow\infty}x_{n,\alpha}(c)=1
\ \text{ and } \
\big(x_{n,\alpha}(c)\big)^{\alpha\left(n\right)}
\sim
\frac{cn}{\vartheta\alpha\left(n\right)}\log\left(\frac{cn}{\vartheta\alpha\left(n\right)}\right)$$
for large $n$. Furthermore,
\begin{equation}\label{eq:lambda2alt}
\sum_{j=1}^{\alpha(n)} j\big(x_{n,\alpha}(c)\big)^j \sim \frac{cn}{\vartheta}\alpha(n).
\end{equation}
\end{lem}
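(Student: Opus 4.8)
The plan is to treat the defining relation \eqref{eq:def_xn(c)} as a transcendental equation for $x_{n,\alpha}(c)$ and to extract its asymptotics by a short two-step bootstrap, keeping every error term uniform in $c\in[c_1,c_2]$. First I would settle the elementary claims. For each $n$ the map $x\mapsto\vartheta\sum_{j=1}^{\alpha(n)}x^j$ is continuous and strictly increasing on $(0,\infty)$ from $0$ to $\infty$, so \eqref{eq:def_xn(c)} has a unique positive root; at $x=1$ its value is $\vartheta\alpha(n)\le\vartheta n^{a_2}<c_1n\le cn$ for all large $n$ by \eqref{eq:condition on alpha}, whence $x_{n,\alpha}(c)>1$ for large $n$, uniformly in $c$. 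Keeping only the top term of the sum and using the trivial bound $cn\le\vartheta\alpha(n)\,x^{\alpha(n)}$ gives
\[
\frac{cn}{\vartheta\alpha(n)}\ \le\ \big(x_{n,\alpha}(c)\big)^{\alpha(n)}\ \le\ \frac{cn}{\vartheta},
\]
so that $\big(x_{n,\alpha}(c)\big)^{\alpha(n)}\ge\tfrac{c_1}{\vartheta}n^{1-a_2}\to\infty$ while $\log x_{n,\alpha}(c)\le\alpha(n)^{-1}\log(c_2n/\vartheta)\le n^{-a_1}\log(c_2n/\vartheta)\to0$; this already proves $x_{n,\alpha}(c)\ge1$ and $x_{n,\alpha}(c)\to1$, uniformly in $c$.

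Next I would pass to the closed form. Writing $L:=\log x_{n,\alpha}(c)>0$ and $A:=\alpha(n)L$ and summing the geometric series, \eqref{eq:def_xn(c)} becomes $\tfrac{cn}{\vartheta}=e^{A}\,\tfrac{1-e^{-A}}{1-e^{-L}}$, hence, using $\log L=\log A-\log\alpha(n)$,
\[
A=\log\!\Big(\frac{cn}{\vartheta\alpha(n)}\Big)+\log A+\big[\log(1-e^{-L})-\log L\big]-\log(1-e^{-A}).
\]
By the first step $L\to0$ and $A\to\infty$ uniformly, so the bracket is $\caO(L)=\caO(n^{-a_1}\log n)$ and $\log(1-e^{-A})=\caO(e^{-A})=\caO(n^{-(1-a_2)})$; both are $o(\log\log n/\log n)$ and get absorbed into the final error. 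The core of the argument is then the bootstrap for $A=\log(cn/(\vartheta\alpha(n)))+\log A+o(\log\log n/\log n)$: by \eqref{eq:condition on alpha} one has $\log(cn/(\vartheta\alpha(n)))\approx\log n$ uniformly in $c$, so a first pass gives $A=\log(cn/(\vartheta\alpha(n)))+\caO(\log\log n)$, feeding this into $\log A$ yields $\log A=\log\log(cn/(\vartheta\alpha(n)))+\caO(\log\log n/\log n)$, and one more substitution gives
\[
A=\log\!\Big(\frac{cn}{\vartheta\alpha(n)}\Big)+\log\log\!\Big(\frac{cn}{\vartheta\alpha(n)}\Big)+\caO\!\Big(\frac{\log\log n}{\log n}\Big),
\]
which is \eqref{eq:StaSadAs} since $A=\alpha(n)\log x_{n,\alpha}(c)$. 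Exponentiating and using $e^{\caO(\log\log n/\log n)}\to1$ gives the stated asymptotic for $\big(x_{n,\alpha}(c)\big)^{\alpha(n)}$, while $x_{n,\alpha}(c)\ge1$ and $x_{n,\alpha}(c)\to1$ were obtained in the first step.

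Finally, for \eqref{eq:lambda2alt} I would differentiate the geometric sum, getting $\sum_{j=1}^{\alpha(n)}jx^j=\big(\alpha(n)x^{\alpha(n)+2}-(\alpha(n)+1)x^{\alpha(n)+1}+x\big)/(x-1)^2$ with $x=x_{n,\alpha}(c)$. Since $L\gg1/\alpha(n)$ (their ratio is $A\to\infty$) and $x^{-\alpha(n)}=e^{-A}\to0$, the numerator equals $\alpha(n)x^{\alpha(n)+1}(x-1)(1+o(1))$, so $\sum_{j=1}^{\alpha(n)}jx^j=\tfrac{\alpha(n)x^{\alpha(n)+1}}{x-1}(1+o(1))$; comparing with $\tfrac{cn}{\vartheta}=\tfrac{x^{\alpha(n)+1}-x}{x-1}=\tfrac{x^{\alpha(n)+1}}{x-1}(1+o(1))$ yields $\sum_{j=1}^{\alpha(n)}jx^j\sim\tfrac{cn}{\vartheta}\alpha(n)$, uniformly in $c$. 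I expect the only real difficulty to be the bookkeeping: verifying that each error produced in the two bootstrap passes, together with the geometric-remainder terms $\caO(L)$ and $\caO(e^{-A})$, is uniformly $o(\log\log n/\log n)$ — which rests entirely on the polynomial two-sided control $n^{a_1}\le\alpha(n)\le n^{a_2}$ of \eqref{eq:condition on alpha} and on the uniformity in $c$ of the limits $L\to0$, $A\to\infty$ established at the outset.
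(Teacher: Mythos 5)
Your proof is correct. Note, however, that the paper does not give its own argument here: the authors state that Lemma~\ref{lem:saddle_point_with_c} is a special case of \cite[Lemma~9]{MaPe16} and simply omit the proof, so your write-up is necessarily a different route — a self-contained one replacing the citation.

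Your route is clean and has the virtue of being elementary: you pass from the geometric-sum closed form $\tfrac{cn}{\vartheta}=e^{A}\tfrac{1-e^{-A}}{1-e^{-L}}$ (with $L=\log x_{n,\alpha}(c)$, $A=\alpha(n)L$) to the fixed-point form $A=\log(cn/(\vartheta\alpha(n)))+\log A+\caO(L)+\caO(e^{-A})$, and then do a two-step bootstrap. All the quantitative input needed ($B:=\log(cn/(\vartheta\alpha(n)))\approx\log n$, $L=\caO(n^{-a_1}\log n)$, $e^{-A}=\caO(n^{-(1-a_2)})$) comes from the crude two-sided bound $cn/(\vartheta\alpha(n))\le x^{\alpha(n)}\le cn/\vartheta$ established at the outset, and these errors are indeed uniformly $o(\log\log n/\log n)$ thanks to the polynomial control $n^{a_1}\le\alpha(n)\le n^{a_2}$ with $0<a_1\le a_2<1$. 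The derivation of \eqref{eq:lambda2alt} by differentiating the geometric sum and observing that the numerator is $\alpha(n)(x-1)x^{\alpha(n)+1}(1+o(1))$ because $\alpha(n)(x-1)\sim A\to\infty$ and $x^{-\alpha(n)}\to 0$ is also correct. The only cosmetic remark: you derive the strict inequality $x_{n,\alpha}(c)>1$ for large $n$, which is fine since the lemma asserts only $x_{n,\alpha}(c)\ge 1$ (implicitly for $n$ large); and the asymptotic equivalence $(x_{n,\alpha}(c))^{\alpha(n)}\sim \tfrac{cn}{\vartheta\alpha(n)}\log(\tfrac{cn}{\vartheta\alpha(n)})$ as stated in the lemma does hold uniformly in $c\in[c_1,c_2]$, which your argument does deliver since every $\caO$ and $o$ you produce carries constants depending only on $c_1,c_2,a_1,a_2,\vartheta$.
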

%
Lemma~\ref{lem:saddle_point_with_c} is  a special case of \cite[Lemma~9]{MaPe16} and follows immediately by inserting our assumptions in \cite[Lemma~9]{MaPe16}.
We thus omit the proof.

\subsection{Asymptotics of longest cycles}

The first set of results that we present deals with the asymptotic
(joint) distribution of the longest cycles under the measure 
$\PTaS$.
Let $\ell_{k}=\ell_{k}\left(\sigma\right)$ denote 
the length of the $k$-th 
longest cycle of the permutation $\sigma$. 
We already know that for fixed $K \in \mathbb{N}$,
under the probability measures $\PTaS$,
we have as $n\to\infty$
\begin{align}
 \frac{1}{\alpha\left(n\right)}\left(\ell_{1},\ell_{2},\dots,\ell_{K}\right)\stackrel{d}{\longrightarrow}\left(1,1,\dots,1\right),
\end{align}
where $\stackrel{d}{\longrightarrow}$ denotes convergence in
distribution (see equation (2.14) in \cite{BeScZe17} or  
\cite{Sc18}). We will significantly improve on this information. 

It turns out that the behaviour of the longest cycles depends on 
the expected length given in \eqref{eqn:exp asympt}. 
In other words, we have to look at the behaviour of $\mu_{\alpha\left(n\right)}$ 
in the three regimes 
$$\mu_{\alpha\left(n\right)}\to\infty,\ 
\mu_{\alpha\left(n\right)} \to \mu  \text{ with } \mu>0
\ \text{, and } \ 
\mu_{\alpha\left(n\right)}\to 0.$$
A discussion about which regime happens when in case of \
$\alpha(n) = n^\beta$ can be found in Section 2.2 of 
\cite{BeScZe17}. 

We start with the simplest case $\mu_{\alpha\left(n\right)}\to\infty$. 
This case only occurs if $\alpha\left(n\right)=o((n\log n)^{\frac{1}{2}})$, see Proposition~\ref{prop:asymptotic_mu}
below. In this case, the distribution of the random vector 
$(\ell_1, \ldots, \ell_K)$ becomes degenerate: 

\begin{prop}
\label{prop:LongestDiv}
Suppose that $\mu_{\alpha(n)}\to\infty$.
Then, for each $K\in\mathbb{N}$, we have
\[
\lim_{n\to\infty}\PTa{\left(\ell_{1},\ell_{2},\dots,\ell_{K}\right) \neq \big(\alpha\left(n\right),\alpha(n),\dots,\alpha(n)\big)}=0.
\]
\end{prop}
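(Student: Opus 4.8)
The plan is to reduce the assertion to a concentration statement for the single cycle count $C_{\alpha(n)}$, and to prove that statement by a second moment argument fed by \eqref{eqn:exp asympt} and Lemma~\ref{lem:saddle_point_with_c}. The crucial first observation is that the event in the statement is \emph{exactly} the event $\{C_{\alpha(n)}\ge K\}$: since no cycle of a permutation in $S_{n,\alpha}$ has length greater than $\alpha(n)$, the $K$ longest cycles are all of length $\alpha(n)$ if and only if there are at least $K$ cycles of this maximal length. Hence it suffices to show $\PTa{C_{\alpha(n)}\le K-1}\to0$. By \eqref{eqn:exp asympt}, $\ETa{C_{\alpha(n)}}\sim\mu_{\alpha(n)}\to\infty$, so for large $n$ we have $\tfrac12\ETa{C_{\alpha(n)}}>K$ and therefore
\[
\bigl\{C_{\alpha(n)}\le K-1\bigr\}\ \subset\ \Bigl\{\,\bigl|C_{\alpha(n)}-\ETa{C_{\alpha(n)}}\bigr|\ge\tfrac12\ETa{C_{\alpha(n)}}\,\Bigr\},
\]
and by Chebyshev's inequality it remains to prove $\mathrm{Var}_{n,\alpha}\bigl(C_{\alpha(n)}\bigr)=o\bigl(\mu_{\alpha(n)}^{2}\bigr)$.

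For the variance I would use the elementary exact formula for falling factorial moments. For a threshold $L>0$ and $N\in\N$ let $Z_N^{(L)}$ denote the quantity defined as in \eqref{eq:def_Ewens_measure_alpha} but for permutations of $\{1,\dots,N\}$ all of whose cycles have length at most $L$, so that $Z_{n,\alpha}=Z_n^{(\alpha(n))}$. Counting ordered $r$-tuples of pairwise disjoint $m$-cycles and using \eqref{eq:def_Ewens_measure_alpha} yields, for $m\le\alpha(n)$ and $rm\le n$,
\[
\ETa{(C_m)_r}=\Bigl(\frac{\vartheta}{m}\Bigr)^{r}\,\frac{Z_{n-rm}^{(\alpha(n))}}{Z_{n}^{(\alpha(n))}}.
\]
Since $\mathrm{Var}_{n,\alpha}(C_{\alpha(n)})=\ETa{(C_{\alpha(n)})_2}+\ETa{C_{\alpha(n)}}-\ETa{C_{\alpha(n)}}^{2}$ and $\ETa{C_{\alpha(n)}}\sim\mu_{\alpha(n)}$, the variance bound reduces to $\ETa{(C_{\alpha(n)})_2}\sim\mu_{\alpha(n)}^{2}$, i.e.\ to
\[
\frac{Z_{n-2\alpha(n)}^{(\alpha(n))}}{Z_{n}^{(\alpha(n))}}\ \sim\ x_{n,\alpha}^{\,2\alpha(n)}.
\]

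I would obtain this last equivalence by writing the ratio as $\bigl(Z_{n-2\alpha(n)}^{(\alpha(n))}/Z_{n-\alpha(n)}^{(\alpha(n))}\bigr)\cdot\bigl(Z_{n-\alpha(n)}^{(\alpha(n))}/Z_{n}^{(\alpha(n))}\bigr)$. The second factor equals $\tfrac{\alpha(n)}{\vartheta}\,\ETa{C_{\alpha(n)}}$ and is thus $\sim x_{n,\alpha}^{\alpha(n)}$ by \eqref{eqn:exp asympt}. The first factor is, by the $r=1$ case of the same formula, $\tfrac{\alpha(n)}{\vartheta}$ times the expected number of $\alpha(n)$-cycles of an Ewens-distributed permutation of $n-\alpha(n)$ points conditioned to have all cycles of length $\le\alpha(n)$; applying \eqref{eqn:exp asympt} with the volume $n$ replaced by $n-\alpha(n)$ (and the cut-off kept at $\alpha(n)$) makes it $\sim\bigl(x_{n,\alpha}(1-\tfrac{\alpha(n)}{n})\bigr)^{\alpha(n)}$, where $x_{n,\alpha}(c)$ is as in Lemma~\ref{lem:saddle_point_with_c}. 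Since $1-\alpha(n)/n\to1$ (as $\alpha(n)=o(n)$ by \eqref{eq:condition on alpha}), the uniformity in $c$ of Lemma~\ref{lem:saddle_point_with_c} turns $\bigl(x_{n,\alpha}(1-\tfrac{\alpha(n)}{n})\bigr)^{\alpha(n)}$ into $x_{n,\alpha}^{\alpha(n)}(1+o(1))$. Multiplying the two factors gives $Z_{n-2\alpha(n)}^{(\alpha(n))}/Z_n^{(\alpha(n))}\sim x_{n,\alpha}^{\,2\alpha(n)}$, hence $\mathrm{Var}_{n,\alpha}(C_{\alpha(n)})=o(\mu_{\alpha(n)}^2)$, and the proposition follows.

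The step I expect to be the main obstacle is the invocation of \eqref{eqn:exp asympt} "at shifted volume" in the previous paragraph: one must verify that that asymptotic — and its proof in \cite{BeScZe17,Sc18} via the saddle point estimates of \cite{MaPe16} — depends on the pair (volume, cut-off) only through the polynomial relation \eqref{eq:condition on alpha}, and hence applies unchanged to the family of problems with volume $n-\alpha(n)$ and cut-off $\alpha(n)$; this relation is preserved because $n-\alpha(n)\sim n$, so that $\alpha(n)$ lies between $(n-\alpha(n))^{a_1'}$ and $(n-\alpha(n))^{a_2'}$ for suitable $0<a_1'<a_2'<1$ and all large $n$. Routing the estimate through the \emph{ratios} $Z_{N-\alpha(n)}^{(\alpha(n))}/Z_N^{(\alpha(n))}$ is what makes this clean: a direct estimate of $\log Z_N^{(\alpha(n))}$ would instead require controlling its discrete second difference over steps of size $\alpha(n)$, and for that the error term in \eqref{eq:StaSadAs}, which gets multiplied by the volume, is too coarse. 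Everything else is routine.
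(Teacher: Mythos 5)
Your proof is correct, and it takes a genuinely different route from the paper. The paper's proof is a one-liner given its Theorem~\ref{thm:Haupt}: it observes, as you do, that the event in question is exactly $\{C_{\alpha(n)}<K\}$, and then invokes the central limit theorem $\bigl(C_{\alpha(n)}-\mu_{\alpha(n)}\bigr)/\sqrt{\mu_{\alpha(n)}}\xrightarrow{d}N(0,1)$ to conclude that this probability tends to zero; indeed, removing the superfluous hypothesis \eqref{eq:CCcltAss} from that CLT is precisely what lets the paper drop the earlier constraint $\alpha(n)\geq n^{1/7+\delta}$ on Proposition~\ref{prop:LongestDiv}. You instead replace the CLT with a second-moment bound: Chebyshev reduces the claim to $\mathrm{Var}_{n,\alpha}(C_{\alpha(n)})=o(\mu_{\alpha(n)}^2)$, which you get from the exact falling-factorial identity $\ETa{(C_m)_r}=(\vartheta/m)^r\,Z_{n-rm}^{(\alpha(n))}/Z_n^{(\alpha(n))}$ and from controlling the resulting partition-function ratio by two applications of \eqref{eqn:exp asympt} (one at shifted volume) together with the uniformity in $c$ in Lemma~\ref{lem:saddle_point_with_c}. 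Both the reduction and the estimates check out — in particular $\alpha(n)\bigl[\log x_{n,\alpha}(1-\alpha(n)/n)-\log x_{n,\alpha}(1)\bigr]\to 0$ follows from \eqref{eq:StaSadAs} since $\log(1-\alpha(n)/n)\to 0$ and the error term is $o(1)$ uniformly in $c$, and the polynomial relation \eqref{eq:condition on alpha} persists at volume $n-\alpha(n)$. What your approach buys is economy: it only needs the already-established first-moment asymptotics \eqref{eqn:exp asympt} and Lemma~\ref{lem:saddle_point_with_c}, avoiding the full machinery of the saddle-point method with a perturbed triangular array that underlies Theorem~\ref{thm:Haupt}; what it gives up is the finer distributional information (Gaussian fluctuations) that the CLT provides, but none of that is needed here, since only a law of large numbers is required. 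Your remark that the argument must be routed through ratios $Z_{N-\alpha}^{(\alpha)}/Z_N^{(\alpha)}$ rather than through $\log Z_N^{(\alpha)}$ directly, because the additive error in \eqref{eq:StaSadAs} is too coarse once multiplied by the volume, is correct and the right thing to flag.
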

A similar proposition was proven in \cite[Theorem 2.8]{BeScZe17} and \cite{Sc18} 
under the additional assumption that $\alpha(n)\geq n^{\frac{1}{7}+\delta}$ for $\delta>0$. 
The reason why we can omit this assumption here is our improved central limit theorem, 
Theorem \ref{thm:Haupt}. We give the  proof of Proposition 
\ref{prop:LongestDiv} in Section 
\ref{sect:proof_longest_1}.

Next, we now look at the case $\mu_{m(n)} \to \mu$ with $\mu>0$. 
We find
\begin{prop}
\label{prop:LongestConv}
Suppose that $\mu_{\alpha\left(n\right)} \to \mu$ with $\mu>0$ as $n\to\infty$. 
We then have for all $d\in\mathbb{N}_{0}$ and all $k\in\N$ that
\begin{align}
 \PTa{\ell_{k}=\alpha\left(n\right)-d}
 \xrightarrow{n\to\infty}
 \frac{1}{\Gamma\left(k\right)}\int_{d\mu}^{\left(d+1\right)\mu}v^{k-1}\mathrm{e}^{-v}\mathrm{d}v.
\end{align}
In other words, $\alpha\left(n\right)-\ell_{k}$
converges in distribution to $\left\lfloor \mu^{-1}X\right\rfloor $,
where $X$ is a gamma-distributed random variable with parameters
$k$ and $1$ and $\lfloor x\rfloor = \max\{n\in\Z;\, n\leq x \}$.
\end{prop}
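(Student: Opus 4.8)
The plan is to show that the joint law of $(C_{\alpha(n)}, C_{\alpha(n)-1}, \ldots, C_{\alpha(n)-d})$ converges to a vector of independent Poisson random variables with parameters $\mu, \mu, \ldots, \mu$, and then to translate this into a statement about the $k$-th longest cycle. The heuristic is clear: by \eqref{eqn:exp asympt}, $\ETa{C_{\alpha(n)-j}} \sim \mu_{\alpha(n)-j}$, and since $x_{n,\alpha} \to 1$ by Lemma~\ref{lem:saddle_point_with_c} (applied with $c=1$), we have $\mu_{\alpha(n)-j} = \vartheta x_{n,\alpha}^{\alpha(n)-j}/(\alpha(n)-j) \sim \mu_{\alpha(n)} \cdot x_{n,\alpha}^{-j} \to \mu$ for each fixed $j$. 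So each relevant cycle count should be asymptotically Poisson($\mu$), and the top few cycle counts should decouple.

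First I would establish the joint Poisson limit. The cleanest route is to compute joint factorial moments: for fixed $d$ and fixed nonnegative integers $r_0, \ldots, r_d$, show
\begin{equation}
\ETa{\prod_{j=0}^{d} (C_{\alpha(n)-j})_{r_j}} \xrightarrow{n\to\infty} \prod_{j=0}^{d} \mu^{r_j},
\label{eq:fact-mom-target}
\end{equation}
where $(x)_r = x(x-1)\cdots(x-r+1)$. By the method of moments for Poisson limits, this forces joint convergence to independent Poisson($\mu$) variables. The factorial moments of cycle counts have the standard combinatorial expression in terms of partition function ratios: with $R = \sum_j (\alpha(n)-j) r_j$, one gets
\begin{equation}
\ETa{\prod_{j=0}^{d} (C_{\alpha(n)-j})_{r_j}} = \frac{n!}{(n-R)!}\,\frac{Z_{n-R,\alpha}}{Z_{n,\alpha}}\, \prod_{j=0}^{d} \left(\frac{\vartheta}{\alpha(n)-j}\right)^{r_j},
\label{eq:fact-mom-exact}
\end{equation}
valid once $R \le n$ (true for large $n$ since $R = \caO(\alpha(n)) = o(n)$) and provided none of the removed cycle lengths exceeds $\alpha(n-R)$ — here one must check that the threshold $\alpha$ applied at size $n-R$ still permits cycles of length $\alpha(n)-d$, which is where the growth condition \eqref{eq:condition on alpha} and the fact $n - R \sim n$ come in. The main analytic input is then the ratio asymptotics $Z_{n-R,\alpha}/Z_{n,\alpha}$ together with $n!/(n-R)! \sim n^R$; combining these with the saddle-point estimate $x_{n,\alpha}^{\alpha(n)} \sim \frac{n}{\vartheta\alpha(n)}\log(\frac{n}{\vartheta\alpha(n)})$ from Lemma~\ref{lem:saddle_point_with_c} should reproduce $\prod_j \mu_{\alpha(n)-j}^{r_j} \to \prod_j \mu^{r_j}$, giving \eqref{eq:fact-mom-target}. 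I expect the partition-function ratio asymptotics to be available from \cite{BeScZe17, Sc18} or to follow from the same saddle-point analysis used there; this is the step I would lean on existing results for most heavily.

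Second, I would convert the joint cycle-count limit into the statement about $\ell_k$. The key observation is that $\sigma \in S_{n,\alpha}$ means there are no cycles longer than $\alpha(n)$, so the length of the $k$-th longest cycle satisfies $\ell_k = \alpha(n) - d$ if and only if $\sum_{j=0}^{d-1} C_{\alpha(n)-j} < k \le \sum_{j=0}^{d} C_{\alpha(n)-j}$ (with the convention that the empty sum is $0$). Writing $N_d := \sum_{j=0}^{d} C_{\alpha(n)-j}$ for the number of cycles of length at least $\alpha(n)-d$, we have $\{\ell_k = \alpha(n)-d\} = \{N_{d-1} < k \le N_d\}$, and by the joint Poisson limit, $N_d$ converges in distribution to a sum of $d+1$ i.i.d. Poisson($\mu$) variables, i.e. to Poisson$((d+1)\mu)$, jointly over $d$ in the natural coupled way (partial sums of the same independent Poissons). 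Hence
\begin{equation}
\PTa{\ell_k = \alpha(n)-d} = \PTa{N_{d-1} < k \le N_d} \xrightarrow{n\to\infty} \Pb{S_d \ge k} - \Pb{S_{d-1} \ge k},
\label{eq:limit-expr}
\end{equation}
where $S_d \sim$ Poisson$((d+1)\mu)$. A standard identity — the tail of a Poisson equals an incomplete Gamma integral, $\Pb{\mathrm{Poisson}(\lambda) \ge k} = \frac{1}{\Gamma(k)}\int_0^\lambda v^{k-1}\mathrm{e}^{-v}\,\mathrm{d}v$ — turns the right-hand side into $\frac{1}{\Gamma(k)}\big(\int_0^{(d+1)\mu} - \int_0^{d\mu}\big) v^{k-1}\mathrm{e}^{-v}\,\mathrm{d}v = \frac{1}{\Gamma(k)}\int_{d\mu}^{(d+1)\mu} v^{k-1}\mathrm{e}^{-v}\,\mathrm{d}v$, which is exactly the claimed limit. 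The interpretation as $\lfloor \mu^{-1} X\rfloor$ with $X \sim \Gamma(k,1)$ then follows since $\Pb{\lfloor \mu^{-1}X\rfloor = d} = \Pb{d\mu \le X < (d+1)\mu}$.

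The main obstacle I anticipate is the uniform control needed to justify that only the top $d+1$ cycle-length counts matter and that the contribution of all shorter cycles does not leak into the event $\{\ell_k = \alpha(n)-d\}$ — equivalently, verifying the convergence \eqref{eq:limit-expr} requires knowing that $N_d$ is tight and that no mass escapes. This is handled automatically once the finite-dimensional Poisson convergence of $(C_{\alpha(n)}, \ldots, C_{\alpha(n)-d})$ is in hand, because $\{\ell_k = \alpha(n)-d\}$ depends only on these finitely many counts. So the real work is concentrated in establishing \eqref{eq:fact-mom-exact}–\eqref{eq:fact-mom-target}, and within that, the partition-function ratio asymptotic $Z_{n-R,\alpha}/Z_{n,\alpha} \sim n^{-R} \prod_{j=0}^d \big((\alpha(n)-j)\mu_{\alpha(n)-j}/\vartheta\big)^{r_j}$; everything downstream is bookkeeping plus the classical Poisson–Gamma identity.
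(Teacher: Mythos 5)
Your strategy is correct in broad outline and the downstream half of the argument (translating a joint Poisson limit for the top cycle counts into the claim about $\ell_k$ via partial sums and the Poisson--Gamma tail identity) is essentially the same as the paper's; the paper phrases it through $\PTa{\ell_{k}\le\alpha(n)-d}=\PTa{\sum_{i=0}^{d-1}C_{\alpha(n)-i}\le k-1}$ rather than through $\{N_{d-1}<k\le N_d\}$, but these are equivalent reformulations and both reduce to the same incomplete Gamma integral.

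The genuine difference is in how you establish the joint Poisson limit. The paper states a separate proposition (Proposition~\ref{prop:CCconWeak}) asserting convergence of $(C_{m_1(n)},\dots,C_{m_K(n)})$ to independent Poissons whenever $\mu_{m_k(n)}\to\mu_k$, and proves it by computing the moment generating function $\ETa{e^{sC_{m(n)}}}$ via equation~\eqref{eq:moment_Cm} and Proposition~\ref{prop:SPmethod}, i.e.\ a single application of the saddle-point method with the perturbation $f_n(z)=\exp\big(\vartheta(e^s-1)z^{m(n)}/m(n)\big)$. You instead go through joint factorial moments, reducing the question to the asymptotics of $\tfrac{n!}{(n-R)!}\cdot Z_{n-R,\alpha}/Z_{n,\alpha}$. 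That works in principle, and you correctly anticipate that the needed ratio asymptotic $\tfrac{n!}{(n-R)!}\,Z_{n-R,\alpha}/Z_{n,\alpha}\sim x_{n,\alpha}^{R}$ must come from a saddle-point estimate (a computation much like the paper's own treatment of $[z^n]z^{r}\exp(\cdots)$ in the proof of Theorem~\ref{thm:main_dtv}, with $r$ of order $\alpha(n)$). The paper's route is cleaner: it avoids comparing two partition functions at different sizes and so avoids entirely the issue you correctly flag, namely that in $Z_{n-R,\alpha}$ the threshold must still be $\alpha(n)$ and not $\alpha(n-R)$ for the combinatorial identity \eqref{eq:fact-mom-exact} to be exact. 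That is a real but fixable bookkeeping point; if you pursue your route you should define a separate notation $Z_{n-R}^{(\alpha(n))}$ and check that Lemma~\ref{lem:saddle_point_with_c} and Proposition~\ref{prop:SPmethod} apply with this mismatched size/threshold, which they do since $n-R\sim n$ and $\alpha(n)$ stays in the allowed window \eqref{eq:condition on alpha}. In short: your proof would be correct after filling in the ratio asymptotic (which is not an off-the-shelf cited result but does follow from the same machinery), whereas the paper gets the Poisson limit in one step from \eqref{eq:moment_Cm}.
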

The proof of this proposition is given in Section \ref{sect:proof_longest_2}. Moreover, the proof allows for deriving the joint distribution of the longest cycles, but the notation of results in this case is cumbersome.

Finally, we have the case where the expected number of cycles vanishes. Here we obtain the most interesting results, namely a functional convergence of the cumulative
numbers of long cycles to a Poisson process, on the correct scale. 
By considering the jump times of this Poisson process, we establish limit
theorems for $\ell_{k}$.
Let us start with a small observation.
\begin{prop}
\label{prop:asymptotic_mu}
We have, as $n\to\infty$,
\begin{align}
  \mu_{\alpha(n)} 
  \approx
  \frac{n\log n}{(\alpha\left(n\right))^2}.
  \label{eq:LongestConv0MuN}
\end{align}
\end{prop}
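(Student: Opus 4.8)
The plan is to derive \eqref{eq:LongestConv0MuN} directly from the definition $\mu_{\alpha(n)} = \vartheta x_{n,\alpha}^{\alpha(n)}/\alpha(n)$ together with the asymptotic information on the saddle point $x_{n,\alpha}$ supplied by Lemma~\ref{lem:saddle_point_with_c} (applied with $c=1$, so that $x_{n,\alpha}(1) = x_{n,\alpha}$). The key identity is the one already recorded in that lemma, namely
\[
\big(x_{n,\alpha}\big)^{\alpha(n)} \sim \frac{n}{\vartheta\,\alpha(n)}\log\!\left(\frac{n}{\vartheta\,\alpha(n)}\right),
\]
from which substituting into \eqref{eq:def_mu_n} immediately yields
\[
\mu_{\alpha(n)} = \vartheta\,\frac{\big(x_{n,\alpha}\big)^{\alpha(n)}}{\alpha(n)} \sim \frac{n}{(\alpha(n))^2}\log\!\left(\frac{n}{\vartheta\,\alpha(n)}\right).
\]
So the entire content of the proposition reduces to showing that the logarithmic factor $\log\big(n/(\vartheta\alpha(n))\big)$ is comparable to $\log n$, i.e. that there are constants $c_1,c_2>0$ with $c_1 \log n \le \log\big(n/(\vartheta\alpha(n))\big) \le c_2 \log n$ for all large $n$.

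The second step is therefore to exploit the standing hypothesis \eqref{eq:condition on alpha}, which says $n^{a_1}\le\alpha(n)\le n^{a_2}$ with $0<a_1\le a_2<1$. Taking logarithms gives
\[
(1-a_2)\log n - \log\vartheta \;\le\; \log\!\left(\frac{n}{\vartheta\,\alpha(n)}\right) \;\le\; (1-a_1)\log n - \log\vartheta,
\]
and since $1-a_2>0$ and $\vartheta$ is a fixed constant, the lower bound is eventually at least $\tfrac{1}{2}(1-a_2)\log n$, while the upper bound is at most $(1-a_1)\log n$ for large $n$. Hence $\log\big(n/(\vartheta\alpha(n))\big) \approx \log n$ in the sense of the paper's $\approx$ notation, and combining this with the displayed asymptotic for $\mu_{\alpha(n)}$ gives $\mu_{\alpha(n)} \approx (n\log n)/(\alpha(n))^2$, which is exactly \eqref{eq:LongestConv0MuN}. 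A small point of care: one should pass from the $\sim$ in the saddle-point asymptotic to the $\approx$ in the conclusion, which is harmless since $\sim$ implies $\approx$, so no precision is lost by this weakening.

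I do not expect any genuine obstacle here: the statement is essentially a bookkeeping consequence of Lemma~\ref{lem:saddle_point_with_c} and the growth assumption on $\alpha$. The only thing to be slightly careful about is the role of the constant $\vartheta$ inside the logarithm — it must be absorbed into the $\approx$-constants rather than ignored, but since $\log\vartheta$ is $o(\log n)$ this is immediate. One could alternatively avoid invoking the closed-form saddle-point asymptotic and instead argue from the defining equation \eqref{eq:StaSad} together with the elementary bounds $x_{n,\alpha}^{\alpha(n)} \le \sum_{j=1}^{\alpha(n)} x_{n,\alpha}^j = n/\vartheta$ and a matching lower bound obtained by comparing the geometric-type sum to its last term, but using Lemma~\ref{lem:saddle_point_with_c} is cleaner and is the route I would take.
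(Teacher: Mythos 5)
Your proposal is correct and follows essentially the same route as the paper: apply Lemma~\ref{lem:saddle_point_with_c} to get $x_{n,\alpha}^{\alpha(n)} \sim \frac{n}{\vartheta\alpha(n)}\log\big(\frac{n}{\vartheta\alpha(n)}\big)$, substitute into \eqref{eq:def_mu_n}, and use the polynomial bounds \eqref{eq:condition on alpha} to replace $\log\big(n/(\vartheta\alpha(n))\big)$ by $\log n$ up to constants. The paper's own proof is a one-line version of exactly this computation; your extra care about the $\sim\Rightarrow\approx$ weakening and the role of $\vartheta$ fills in details the paper leaves implicit.
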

\begin{proof}
 Inserting the definition of $\mu_{\alpha(n)}$, see \eqref{eq:def_mu_n}, and using Lemma~\ref{lem:saddle_point_with_c}, we obtain
 \begin{align}
  \mu_{\alpha(n)} 
  =
  \vartheta \frac{x_{n,\alpha}^{\alpha(n)}}{\alpha(n)}
  \sim
  \frac{n}{(\alpha\left(n\right))^2}\log\left(\frac{n}{\vartheta\alpha\left(n\right)}\right)
  \approx
  \frac{n\log n}{(\alpha\left(n\right))^2}.
 \end{align}
This completes proof of this proposition.
\end{proof}
This proposition immediately implies that $\mu_{\alpha(n)} \to 0$ if and only if  $\frac{n\log n}{(\alpha\left(n\right))^2}\to 0$ as $n\to\infty$.
We now define 
\begin{align}
 d_{t}\left(n\right)
:=
\max\left\{ \alpha\left(n\right)-\left\lfloor \frac{t}{\mu_{\alpha\left(n\right)}}\right\rfloor ,0\right\}.
\label{eq:def_dt}
\end{align}
Note that $d_{t}(n) = \alpha(n)(1 + o(1))$ and $\left\lfloor \frac{t}{\mu_{\alpha\left(n\right)}}\right\rfloor \to\infty$ if $\mu_{\alpha(n)}\to 0$ for fixed $t$.
We now have
\begin{thm}
\label{thm:Longest0Poissonprocess}
Suppose that $\mu_{\alpha(n)}\to 0$ and define for $t\geq 0$ 
\[
P_{t}
:=
\sum_{j=d_{t}\left(n\right)+1}^{\alpha\left(n\right)}C_{j}.
\]
Then the stochastic process $\left\{P_{t}, t\geq0 \right\}$ converges under $\PTaS$ as $n\to\infty$
weakly in $\mathcal{D}\left[0,\infty\right)$ to a Poisson process
with parameter $1$, where  $\mathcal{D}\left[0,\infty\right)$ denotes the space of c\`{a}dl\`{a}g-functions.
%
\end{thm}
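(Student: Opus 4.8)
The plan is to prove weak convergence in $\mathcal{D}[0,\infty)$ by combining convergence of finite-dimensional distributions with a tightness argument, exploiting the fact that a Poisson process is determined by its finite-dimensional distributions and that $P_t$ is a pure-jump, non-decreasing, integer-valued process. Since the Poisson process is stochastically continuous and $\mathcal{D}[0,\infty)$-valued, it suffices (by the standard criterion, e.g.\ in Billingsley) to show that for every finite set $0 \le t_1 < t_2 < \cdots < t_r$ the increments $\big(P_{t_1}, P_{t_2}-P_{t_1}, \ldots, P_{t_r}-P_{t_{r-1}}\big)$ converge jointly in distribution to independent Poisson random variables with means $t_1, t_2-t_1, \ldots, t_r - t_{r-1}$, and then to verify an Aldous-type or modulus-of-continuity tightness condition. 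The key structural input is that, by \eqref{eq:def_dt}, $P_{t_{i}} - P_{t_{i-1}} = \sum_{j = d_{t_i}(n)+1}^{d_{t_{i-1}}(n)} C_j$ is a sum of cycle counts $C_j$ over a block of indices $j$ just below $\alpha(n)$, and these blocks for distinct $i$ are disjoint; so the whole statement reduces to a joint Poisson approximation for cycle counts $C_j$ with $j$ ranging over $(d_{t_r}(n), \alpha(n)]$.

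The first main step is therefore to establish that the family $\{C_j : d_{t_r}(n) < j \le \alpha(n)\}$ is, in the limit, a collection of independent Poisson random variables with $C_j \approx \mathrm{Poisson}(\mu_j(n))$. This is precisely the regime covered by Theorem~\ref{thm:main_dtv} (joint Poisson approximation in total variation for cycle lengths $o(\alpha(n))$ — here the relevant indices are $\alpha(n)(1+o(1))$, so one should either invoke the appropriate version of that theorem or re-derive the needed bound directly via the generating-function / saddle-point machinery underlying \eqref{eqn:exp asympt}); in the present paper I would cite Theorem~\ref{thm:main_dtv} and the moment asymptotics \eqref{eqn:exp asympt}. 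Granting that, the mean of the $i$-th increment is $\sum_{j=d_{t_i}(n)+1}^{d_{t_{i-1}}(n)} \mu_j(n)$, and the second main step is the computation that this sum converges to $t_i - t_{i-1}$. Using $\mu_j(n) = \vartheta x_{n,\alpha}^j / j$ from \eqref{eq:def_mu_n}, and the fact that on the block $j = \alpha(n)(1+o(1))$ we have $j \sim \alpha(n)$ while $x_{n,\alpha}^j = x_{n,\alpha}^{\alpha(n)} x_{n,\alpha}^{j - \alpha(n)}$, one writes the sum as a telescoping/geometric-type sum in the exponent $j - \alpha(n) \in \{-\lfloor t_{i-1}/\mu_{\alpha(n)}\rfloor, \ldots, -\lfloor t_i/\mu_{\alpha(n)}\rfloor - 1\}$; since $\mu_{\alpha(n)} = \vartheta x_{n,\alpha}^{\alpha(n)}/\alpha(n) \to 0$ but $x_{n,\alpha} \to 1$ (Lemma~\ref{lem:saddle_point_with_c}), the ratio $x_{n,\alpha}^{j-\alpha(n)}$ stays $1 + o(1)$ uniformly across the block, and one gets $\sum_j \mu_j(n) \sim \big(\lfloor t_i/\mu_{\alpha(n)}\rfloor - \lfloor t_{i-1}/\mu_{\alpha(n)}\rfloor\big)\cdot \mu_{\alpha(n)} \to t_i - t_{i-1}$. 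Independence across increments is immediate from the disjointness of the index blocks together with the asymptotic independence from the Poisson approximation, which yields the finite-dimensional convergence.

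For tightness I would use that $P_t$ is non-decreasing with $P_0 = 0$, so the relevant control is on the size of jumps and on oscillation over small time intervals; concretely, it is enough to bound $\mathbb{P}_{n,\alpha}[P_{t+\delta} - P_t \ge 2]$ for small $\delta$ uniformly in $t$ on compacts, which follows from the above computation of the (small) expected increment $\sim \delta$ plus the Poisson-approximation bound controlling the second factorial moment, giving $\mathbb{P}[P_{t+\delta}-P_t \ge 2] \le C\delta^2$; combined with $\mathbb{E}[P_{t+\delta}-P_t] \le C\delta$ this is the classical tightness criterion for counting processes. The main obstacle, and the place requiring genuine care rather than bookkeeping, is making the Poisson approximation of step one uniform and simultaneously valid over a window of $\Theta(1/\mu_{\alpha(n)}) \to \infty$ indices $j$ near $\alpha(n)$: one must verify that the total variation error in approximating $\{C_j\}$ over this growing window by independent Poissons still tends to $0$, which is exactly why the strengthened Theorem~\ref{thm:main_dtv} (covering the full $o(\alpha(n))$ range, hence blocks of indices that are $\alpha(n)(1+o(1))$) is needed, and why this result could not be obtained with the cruder approximations of \cite{BeScZe17}. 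Once that uniformity is in hand, the mean computation and tightness are routine.
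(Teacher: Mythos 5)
Your overall framework (finite-dimensional convergence plus tightness, reduction to a joint Poisson approximation over the window of cycle lengths $j\in(d_{t_K}(n),\alpha(n)]$, and the mean computation $\sum_{j=d_{t_i}(n)+1}^{d_{t_{i-1}}(n)}\mu_j(n)\to t_i-t_{i-1}$ via $x_{n,\alpha}^{j-\alpha(n)}=1+o(1)$ uniformly on the block) is correct and matches the paper. However, there is a genuine gap in the central step: you want to justify the Poisson approximation over a window of $\Theta(1/\mu_{\alpha(n)})\to\infty$ indices near $\alpha(n)$ by invoking Theorem~\ref{thm:main_dtv}, but that theorem concerns the cycle counts $(C_1,\ldots,C_{b(n)})$ with $b(n)=o(\alpha(n))$, i.e.\ the \emph{short} cycles, not the counts $C_j$ with $j=\alpha(n)(1+o(1))$. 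There is no ``appropriate version'' of Theorem~\ref{thm:main_dtv} in the paper for the long end of the spectrum, and Proposition~\ref{prop:CCconWeak} (joint weak Poisson convergence) only covers a \emph{fixed} number $K$ of cycle counts, not a growing block. You flag this yourself as the place requiring genuine care but then assume it as a black box; the argument, as written, therefore does not close.

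The paper instead proves the finite-dimensional convergence directly, writing the joint moment generating function of the increments $(P_{t_k}-P_{t_{k-1}})_{k=1}^K$ as a coefficient extraction problem (via Lemma~\ref{lem:polya}) and applying the saddle-point Proposition~\ref{prop:SPmethod} with the perturbation $f_n(z)=\exp\bigl(\sum_k(\mathrm{e}^{s_k}-1)\sum_{j}\vartheta z^j/j\bigr)$; the growing window is handled by verifying the admissibility condition $|\!|\!|f_n|\!|\!|_n\to0$, which works out to $\caO\bigl((\alpha(n)/n)^{5/12}\bigr)$. For tightness the paper likewise avoids any a priori Poisson approximation and instead bounds $\ETa{(P_t-P_{t_1})^2(P_{t_2}-P_t)^2}$ by the same saddle-point machinery, invoking Billingsley's moment criterion; your proposed Aldous-type route via $\mathbb{P}[P_{t+\delta}-P_t\ge2]\le C\delta^2$ is reasonable in spirit but your derivation of that bound again leans on the unavailable total-variation approximation. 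To repair your proof you would need to replace the appeal to Theorem~\ref{thm:main_dtv} with a direct generating-function/saddle-point computation of the relevant joint MGF and moments, which is essentially the paper's route.
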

This theorem is proved in Section \ref{sect:proof_longest_3}.
It immediately implies the following corollary. 
\begin{cor}
\label{cor:Longest0}
Let $K\in\mathbb{N}$ be given, $\alpha(n)$ be as in \eqref{eq:condition on alpha} and suppose that $\mu_{\alpha\left(n\right)}\to 0$.
We have convergence in distribution of
\[
\mu_{\alpha\left(n\right)}\cdot\left(\alpha\left(n\right)-\ell_{1},\ell_{2}-\ell_{1},\dots,\ell_{K}-\ell_{K-1}\right)
\]
under $\PTaS$ to independent
exponentially distributed random variables with parameters $1$. In
particular, $\mu_{\alpha\left(n\right)}\left(\alpha\left(n\right)-\ell_{k}\right)$
converges in distribution to a gamma-distributed random variable with
parameters $k$ and $1$.
\end{cor}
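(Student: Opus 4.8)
\emph{The plan} is to read the corollary off Theorem~\ref{thm:Longest0Poissonprocess} by identifying the ordered cycle lengths $\ell_1\ge\ell_2\ge\cdots$ with the jump times of the limiting Poisson process and then applying the continuous mapping theorem. \textbf{Step 1: encoding $\ell_k$ through jump times.} For $k\in\N$ set $T^{(n)}_k:=\inf\{t\ge 0:P_t\ge k\}$, the instant of the $k$-th jump of the c\`{a}dl\`{a}g step process $t\mapsto P_t$. Since $P_t=\sum_{j=d_t(n)+1}^{\alpha(n)}C_j$ counts exactly the cycles of $\sigma$ of length strictly larger than $d_t(n)$, we have $\{P_t\ge k\}=\{\ell_k>d_t(n)\}$. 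Using the definition \eqref{eq:def_dt} of $d_t(n)$ and that $\lfloor\cdot\rfloor$ is integer valued, a short computation shows that, on the event $\{\sigma\text{ has at least }k\text{ cycles}\}$ (so that $\ell_k\ge 1$),
\[
\{\,t\ge 0:P_t\ge k\,\}=\big[\mu_{\alpha(n)}(\alpha(n)-\ell_k+1),\,\infty\big),
\qquad\text{hence}\qquad
T^{(n)}_k=\mu_{\alpha(n)}\big(\alpha(n)-\ell_k+1\big).
\]
As $\sigma$ has at least $K$ cycles with probability $1-o(1)$ (for instance because $\ETa{C_1+\cdots+C_b}\to\infty$ and the short cycle counts concentrate), the identities
\[
\mu_{\alpha(n)}\big(\alpha(n)-\ell_k\big)=T^{(n)}_k-\mu_{\alpha(n)}
\quad\text{and}\quad
\mu_{\alpha(n)}\big(\ell_{k-1}-\ell_k\big)=T^{(n)}_k-T^{(n)}_{k-1}\ \ (2\le k\le K)
\]
hold with probability $1-o(1)$, so it suffices to determine the joint limiting distribution of $T^{(n)}_1,\dots,T^{(n)}_K$.

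\textbf{Step 2: the limit of the jump times.} Let $\Phi_K:\mathcal{D}[0,\infty)\to[0,\infty]^K$ be the functional $\Phi_K(x)=\big(\inf\{t:x(t)\ge 1\},\dots,\inf\{t:x(t)\ge K\}\big)$, so that $\Phi_K\big(\{P_t\}_{t\ge 0}\big)=(T^{(n)}_1,\dots,T^{(n)}_K)$. One checks that $\Phi_K$ is continuous at every nondecreasing c\`{a}dl\`{a}g path which attains each of the levels $1,\dots,K$ through a single upward jump of height exactly $1$ and whose first $K$ jump times are pairwise distinct; the sample paths of a rate-$1$ Poisson process have this property almost surely. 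Hence $\Phi_K$ is almost surely continuous for the law of the limit in Theorem~\ref{thm:Longest0Poissonprocess}, and the continuous mapping theorem yields $(T^{(n)}_1,\dots,T^{(n)}_K)\stackrel{d}{\longrightarrow}(T_1,\dots,T_K)$, where $T_1<T_2<\cdots$ are the arrival times of a rate-$1$ Poisson process. Since $\mu_{\alpha(n)}\to 0$, Slutsky's lemma together with Step~1 gives
\[
\mu_{\alpha(n)}\big(\alpha(n)-\ell_1,\ \ell_1-\ell_2,\ \dots,\ \ell_{K-1}-\ell_K\big)\ \stackrel{d}{\longrightarrow}\ \big(T_1,\ T_2-T_1,\ \dots,\ T_K-T_{K-1}\big).
\]
(Alternatively, one can bypass the Skorokhod topology and combine the finite-dimensional convergence of $\{P_t\}$ with the identity of Step~1 directly.)

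\textbf{Step 3: identification.} By the elementary theory of the Poisson process, the increments $T_1,\,T_2-T_1,\,\dots,\,T_K-T_{K-1}$ are independent and $\mathrm{Exp}(1)$-distributed, and each partial sum $T_k$ is $\Gamma(k,1)$-distributed. Substituting this into the last display gives the first assertion of Corollary~\ref{cor:Longest0}, while the marginal statement follows either by summation or directly from $\mu_{\alpha(n)}(\alpha(n)-\ell_k)=T^{(n)}_k-\mu_{\alpha(n)}\stackrel{d}{\longrightarrow}T_k$.

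\textbf{Where the work is.} The only nonroutine point is the claimed almost sure continuity of the jump-time map $\Phi_K$ on Skorokhod space; this is exactly where the structural features of the Poisson limit (unit jumps, no two jumps at the same time) enter and what makes the continuous mapping theorem applicable. Everything else amounts to bookkeeping with the floor function in \eqref{eq:def_dt} and the observation that $\sigma$ has at least $K$ cycles with probability $1-o(1)$.
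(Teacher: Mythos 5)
Your proposal is correct and takes the same approach as the paper: the paper's proof consists of one sentence citing that the limit in Theorem~\ref{thm:Longest0Poissonprocess} has jump times with the stated distribution, and you spell out the details (identifying $\ell_k$ with the $k$-th jump time $T^{(n)}_k$ of $\{P_t\}$, applying the continuous mapping theorem to the jump-time functional, and using the standard description of Poisson arrival times). You also silently correct a sign typo in the corollary's statement ($\ell_2-\ell_1$ should read $\ell_1-\ell_2$, etc.), which is the right call.
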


\begin{proof}
The claim is a consequence of the convergence established in the proof
of Theorem \ref{thm:Longest0Poissonprocess} since the limit distribution
is the distribution of the jump times of the Poisson process (see, e.g. \cite[p.5]{Li10}).
\end{proof}

\subsection{Total variation distance}

Here we study the joint behaviour of the cycle counts $C_m$
in the region $m=o(\alpha(n))$.
Recall that the total variation distance of two probability measures $\bbP$ and 
$\widetilde \bbP $ on a discrete probability space $\Omega$ is given by 
$\| \bbP - \widetilde \bbP \|_{\rm TV} = \sum_{\omega \in \Omega} (\bbP(\omega) - \widetilde \bbP(\omega))_+$. 
\begin{thm}[\protect{\cite[Theorem 2.2]{BeScZe17}}] 
\label{thm:main_thm2_old}
	Let $b = (b(n))_n$ be a sequence of integers
    with $b(n) = o \big( \alpha(n) (\log n)^{-1}\big)$. Let 
	$\bbP_{n,b(n),\alpha}$ be the distribution of $(C_1, \ldots, C_{b(n)})$
	under the uniform measure on $S_{n,\alpha}$, and let $\widetilde \bbP_{b(n)}$ 
	be the distribution
	of independent Poisson-distributed random variables 
	$(Z_{1}, \ldots Z_{b(n)})$ with 
	$\widetilde{\mathbb{E}}_{b(n)}(Z_{j}) = \frac{1}{j}$ 
	for all $j \leq b(n)$. Then there exists $c<\infty$ so that for all 
	$n \in \N$, we have  
	\[
	\| \bbP_{n,b(n),\alpha} - \tilde \bbP_{b(n)} \|_{\rm TV} \leq c 
	\left( \frac{\alpha(n)}{n} + b(n) \frac{\log n}{\alpha(n)}
	\right).
	\]
\end{thm}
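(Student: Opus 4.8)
The plan is to combine the conditioning relation for the uniform measure on $S_{n,\alpha}$ with sharp local‑limit estimates for weighted sums of independent Poisson variables, in the spirit of Arratia--Tavar\'e \cite{ArTa92c}. Fix $\vartheta=1$ as in the statement, let $x=x_{n,\alpha}$ be the saddle point of \eqref{eq:StaSad}, let $Z_1,\dots,Z_{\alpha(n)}$ be independent with $Z_j\sim\mathrm{Poisson}(x^j/j)$, and set $T_m:=\sum_{j=1}^m jZ_j$. The cycle‑type counting formula yields the conditioning relation: under the uniform measure on $S_{n,\alpha}$, the vector $(C_1,\dots,C_{\alpha(n)})$ has the law of $(Z_1,\dots,Z_{\alpha(n)})$ conditioned on $\{T_{\alpha(n)}=n\}$. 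This identity holds for every $x>0$; I would choose $x=x_{n,\alpha}$ precisely so that $\E{T_{\alpha(n)}}=\sum_{j=1}^{\alpha(n)}x^j=n$, putting the conditioning event at the mean of $T_{\alpha(n)}$.

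Writing $\mathbf Z_{\le b}=(Z_1,\dots,Z_{b(n)})$, I would split the bound by the triangle inequality as $\|\bbP_{n,b(n),\alpha}-\widetilde\bbP_{b(n)}\|_{\rm TV}\le E_1+E_2$, where $E_1:=\|\mathcal L(\mathbf Z_{\le b}\mid T_{\alpha(n)}=n)-\mathcal L(\mathbf Z_{\le b})\|_{\rm TV}$ and $E_2:=\|\mathcal L(\mathbf Z_{\le b})-\widetilde\bbP_{b(n)}\|_{\rm TV}$. The term $E_2$ is easy: by subadditivity of total variation over independent coordinates and $\|\mathcal L(\mathrm{Poisson}(\lambda))-\mathcal L(\mathrm{Poisson}(\nu))\|_{\rm TV}\le|\lambda-\nu|$, one gets $E_2\le\sum_{j=1}^{b(n)}(x^j-1)/j$. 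By Lemma~\ref{lem:saddle_point_with_c} (with $c=1$) and \eqref{eq:condition on alpha}, $\log x=\caO(\log n/\alpha(n))$, hence $x^j-1\le j(x-1)x^{j}=\caO(j\log n/\alpha(n))$ uniformly for $j\le b(n)=o(\alpha(n)(\log n)^{-1})$ (in that range $x^{b(n)}$ stays bounded), so $E_2=\caO(b(n)\log n/\alpha(n))$, which is the second term in the claim.

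The real work is $E_1$. Put $R:=T_{b(n)}$ and $T':=T_{\alpha(n)}-R=\sum_{j=b(n)+1}^{\alpha(n)}jZ_j$; these are independent, and factorising the product law of the $Z_j$ gives $\bbP[\mathbf Z_{\le b}=\mathbf a\mid T_{\alpha(n)}=n]=\bbP[\mathbf Z_{\le b}=\mathbf a]\cdot\bbP[T'=n-r(\mathbf a)]/\bbP[T_{\alpha(n)}=n]$ with $r(\mathbf a)=\sum_j ja_j$, so that
\[
E_1=\tfrac12\,\E{\bigl|\,\bbP[T'=n-R]/\bbP[T_{\alpha(n)}=n]-1\,\bigr|}.
\]
It then suffices to prove that $\bbP[T'=n-s]/\bbP[T_{\alpha(n)}=n]=1+\caO(\alpha(n)/n)$ uniformly for $s$ in the bulk of $R$, the contribution of the light tails of $R$ being negligible. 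Here $\E R\approx b(n)$ and $\mathrm{Var}\,R\approx b(n)^2$, while $\mathrm{Var}\,T_{\alpha(n)}=\sum_{j\le\alpha(n)}jx^j\sim n\alpha(n)$ by \eqref{eq:lambda2alt}. To control the ratio I would invoke the local asymptotics of Manstavi{\v{c}}ius--Petuchovas \cite{MaPe16} (the same input behind Lemma~\ref{lem:saddle_point_with_c}) to obtain a local limit theorem for $T_{\alpha(n)}$ and $T'$: since $T_{\alpha(n)}$ is centred exactly at $n$, and for $s$ in the bulk of $R$ the point $n-s$ lies within $\caO(b(n))\ll\sqrt{\mathrm{Var}\,T'}$ of $\E{T'}=n-\E R$, the Gaussian part of the ratio is $1+\caO(b(n)^2/(n\alpha(n)))=1+\caO(\alpha(n)/n)$, and the binding error is the Edgeworth (kurtosis) correction near the centre, of order $\kappa_4(T')/(\mathrm{Var}\,T')^{2}\approx n(\alpha(n))^3/(n\alpha(n))^{2}=\alpha(n)/n$. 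Combining this with the bound on $E_2$ gives the theorem.

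The hard part will be the ratio estimate inside $E_1$: it requires a local limit theorem for $T_{\alpha(n)}$, and for $T'$ (i.e.\ after deleting the first $b(n)$ summands), uniform over a window of width comparable to the spread of $R$, together with honest control of the Edgeworth corrections, so as to land exactly on the rate $\alpha(n)/n$ rather than a weaker one. This is precisely where the precision of \cite{MaPe16} is needed — in particular for the flatness of $m\mapsto\bbP[T_{\alpha(n)}=m]$ and the comparison $\bbP[T'=m]\approx\bbP[T_{\alpha(n)}=m]$.
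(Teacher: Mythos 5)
The theorem is cited from \cite{BeScZe17} and not reproved here, but your strategy matches the one the paper uses for the sharper Theorem~\ref{thm:main_dtv}: write the constrained cycle counts via the conditioning relation (equation~\eqref{eq:cond_relation_dtv}) as independent Poissons $Y_j$ with $\E{Y_j}=x_{n,\alpha}^j/j$ conditioned on $\{T_{0\alpha(n)}^{(n)}=n\}$, then control the ratio $\Pb{T_{b(n)\alpha(n)}^{(n)}=n-r}/\Pb{T_{0\alpha(n)}^{(n)}=n}$ after a bulk/tail split in $r$. Your $E_1$ is exactly the quantity estimated in Theorem~\ref{thm:main_dtv}, and the triangle-inequality step adding $E_2$ to pass from intensities $x_{n,\alpha}^j/j$ down to $1/j$ is a tidy way to reduce the cited statement to the refined one; your bound $E_2=\caO(b(n)\log n/\alpha(n))$ is correct, since $b=o(\alpha/\log n)$ ensures $x_{n,\alpha}^{b(n)}$ stays bounded. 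The only place the outline is lighter than what a full proof needs is the local-limit comparison of $T'$ to $T_{\alpha(n)}$: asserting ``the binding error is the Edgeworth kurtosis $\approx\alpha/n$'' is the right order, but one must justify why the skewness term $\kappa_3/\sigma^3\approx\sqrt{\alpha/n}$, which is nominally much larger, does not dominate — it only cancels because one evaluates at (resp.\ within $\caO(b(n))\ll\sigma'$ of) the mean, where $H_3$ nearly vanishes — and one also needs uniform control of the remainder beyond the Edgeworth expansion. The paper does exactly this work via the general saddle-point estimate Proposition~\ref{prop:SPmethod} together with the saddle-point comparison Lemma~\ref{lem:NeuerSattel3} for $x_n$ versus $x_{n,\alpha}$, and the Chernoff bound Lemma~\ref{lem:rho} (with $\rho=\log^2 n$) for the $r$-tail you wave off as negligible. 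So the route is the same; you would simply realise the LLT and tail steps with the paper's lemmas rather than building a fresh Edgeworth argument.
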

In the special case $ \alpha(n) \geq \sqrt{n\log(n)}$, Judkovich \cite{Ju19} has computed the above total variation distance using Steins method and obtained a slightly better upper bound.

On the full symmetric group $S_n$, a similar result as Theorem~\ref{thm:main_thm2_old} holds with $b(n)=o(n)$, see \cite{ArTa92c}.
A natural question at this point is thus if one can replace $b(n) $ in Theorem~\ref{thm:main_thm2_old} by  $b(n) = o \big( \alpha(n)\big)$.
Recall, we have seen in equation \eqref{eqn:exp asympt} that 
\[
\ETa{C_{m\left(n\right)}}
\sim
\vartheta  \frac{x_{n,\alpha}^m}{m}
\qquad \text{ as } n\to\infty.
\]
Using Lemma~\ref{lem:saddle_point_with_c}, we immediately see that $ \ETa{C_{m\left(n\right)}} \sim \E{Z_m}$ 
if and only if $m= o \big( \alpha(n) (\log n)^{-1}\big)$. 
Thus $b(n) = o \big( \alpha(n) (\log n)^{-1} \big)$ is the most one can expect in Theorem~\ref{thm:main_thm2_old}. 
To overcome the problem with the expectations, we replace the random variables $Z_j$ with fixed expectation by
random variables $Y_j^{(n)}$ with an expectation depending on $n$ so that
\begin{align}
  \ETa{C_{m\left(n\right)}} \sim \E{Y_m^{(n)}} \ \text{ for all } m= o \big( \alpha(n) \big).
\end{align}
However, to simplify the notation, we write $Y_{j}$ instead $Y_{j}^{(n)}$.
We now have
\begin{thm}
\label{thm:main_dtv}
	Let $b = (b(n))_n$ be a sequence of integers with $b(n) = o \big( \alpha(n) \big)$. 
    Let $\bbP_{n,\vartheta, b(n),\alpha}$ be the distribution of $(C_1, \ldots, C_{b(n)})$
	under $\PTaS$ on $S_{n,\alpha}$. 
    Further, let $\widehat\bbP_{b(n)}$ be the distribution
	of independent Poisson-distributed random variables 
	$(Y_{1}, \ldots, Y_{b(n)})$ with 
	$\E{Y_{j}} = \mu_{j}(n)$ for all $j \leq b(n)$
	and $\mu_{j}(n)$ as in \eqref{eq:def_mu_n}.
	Then
        \begin{align}
        \| \bbP_{n,\vartheta,b(n),\alpha} - \widehat \bbP_{b(n)} \|_{\rm TV} 
        = \caO\left(n^{\epsilon} \left(\frac{\alpha(n)}{n}\right)^{\frac{5}{12}}\right),
        \label{eq:thm:main_dtv1}
        \end{align} 
	where $\epsilon>0$ is arbitrary.
	Further, if $b(n) = o \big( \alpha(n) (\log n)^{-1}\big)$ then 
        \begin{align}
        \| \bbP_{n,\vartheta,b(n),\alpha} - \widehat \bbP_{b(n)} \|_{\rm TV} 
        = 
        \caO\left( \frac{\alpha(n)}{n} + \frac{b(n)\log n}{n^{\frac{5}{12}}\alpha^{\frac{7}{12}}}\right).
        \label{eq:thm:main_dtv2}
        \end{align}
\end{thm}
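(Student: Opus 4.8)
The plan is to follow the generating-function/saddle-point strategy of \cite{BeScZe17} but to replace the $Z_j$ by the size-dependent $Y_j$ so that the expectations match up to the full range $b(n)=o(\alpha(n))$. The starting point is the Feller coupling / conditioning relation: if $(Y_1,\dots,Y_{b(n)})$ are independent Poisson with $\E{Y_j}=\mu_j(n)=\vartheta x_{n,\alpha}^j/j$, then conditioning the full independent Poisson field $(Y_1,\dots,Y_{\alpha(n)})$ on the event $\{\sum_{j=1}^{\alpha(n)} jY_j=n\}$ reproduces exactly $\PTaS$ restricted to $C_1,\dots,C_{\alpha(n)}$; this is the whole point of choosing $x_{n,\alpha}$ as in \eqref{eq:StaSad}, since that choice makes $\E{\sum_j jY_j}=n$. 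Consequently, writing $T_0=\sum_{j=1}^{b(n)}jY_j$ and $T_1=\sum_{j=b(n)+1}^{\alpha(n)}jY_j$, a standard computation (e.g. as in Arratia--Tavaré, or Lemma-level results in \cite{BeScZe17}) gives
\begin{align}
\| \bbP_{n,\vartheta,b(n),\alpha} - \widehat\bbP_{b(n)} \|_{\rm TV}
\leq \E{\left| 1 - \frac{\Pb{T_1 = n - T_0}}{\Pb{T_0+T_1 = n}}\right|},
\label{eq:TVbound}
\end{align}
where the expectation is over $T_0\sim\sum_{j\le b(n)}jY_j$. The task is then to show the ratio on the right is $1+o(1)$ with the claimed rate, which reduces to local-limit-type estimates for the laws of $T_1$ and of $T_0+T_1$ near their means.

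The key steps, in order, are as follows. First I would record the moment asymptotics: by Lemma \ref{lem:saddle_point_with_c}, $\E{T_0+T_1}=n$ exactly, $\mathrm{Var}(T_0+T_1)=\sum_{j=1}^{\alpha(n)}j^2\mu_j(n)=\vartheta\sum_{j=1}^{\alpha(n)}jx_{n,\alpha}^j$, and using $x_{n,\alpha}\ge1$ together with \eqref{eq:lambda2alt} one gets $\mathrm{Var}(T_0+T_1)\approx n\alpha(n)$; similarly $\E{T_0}=\vartheta\sum_{j\le b(n)}x_{n,\alpha}^j\approx b(n)$ and $\mathrm{Var}(T_0)\approx b(n)^2$ (since $x_{n,\alpha}\to1$), so $T_0$ is of order $b(n)=o(\alpha(n))$, hence $o(\sqrt{\mathrm{Var}(T_0+T_1)/\alpha(n)})$ — this is the quantitative reason $b(n)=o(\alpha(n))$ is the right threshold here rather than $o(\alpha(n)/\log n)$. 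Second, I would prove a local central limit theorem for $T_1$ (and for $T_0+T_1$) in a window of width $O(b(n))$ around $n$:
\begin{align}
\Pb{T_1 = n-k} = \frac{1}{\sqrt{2\pi\,\mathrm{Var}(T_1)}}\exp\!\left(-\frac{(n-k-\E{T_1})^2}{2\,\mathrm{Var}(T_1)}\right)\bigl(1+o(1)\bigr)
\label{eq:LLT}
\end{align}
uniformly for $|k|\le C b(n)$, by Fourier inversion: write the characteristic function $\E{e^{\ii s T_1}}=\exp\bigl(\sum_{j=b(n)+1}^{\alpha(n)}\mu_j(n)(e^{\ii s j}-1)\bigr)$, split the integral over $s\in[-\pi,\pi]$ into a central part $|s|\le n^{-1/2}\alpha(n)^{-1/2+\delta}$ where a Gaussian approximation applies, and a tail part which must be shown to be exponentially negligible. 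Third, combining \eqref{eq:LLT} for $T_1$ and for $T_0+T_1$, the ratio in \eqref{eq:TVbound} becomes $\exp(O(\text{error}))$ where the error collects (a) the difference of the Gaussian exponents, controlled by $|T_0|\cdot|n-\E{T_1}|/\mathrm{Var}(T_1)$ and $T_0^2/\mathrm{Var}(T_1)$, both $\caO(n^\epsilon(\alpha(n)/n)^{5/12})$-type after inserting the orders above and the Manstavi\v{c}ius--Petuchovas error term from \eqref{eq:StaSadAs}, and (b) the local-limit error terms themselves; taking expectation over $T_0$ and using its concentration yields \eqref{eq:thm:main_dtv1}. For the sharper bound \eqref{eq:thm:main_dtv2} under $b(n)=o(\alpha(n)/\log n)$, one instead compares directly to the \emph{fixed}-expectation Poissons $Z_j$ as an intermediate step: $\|\widehat\bbP_{b(n)}-\widetilde\bbP_{b(n)}\|_{\rm TV}\le\sum_{j\le b(n)}|\mu_j(n)-1/j|=\sum_{j\le b(n)}\tfrac1j|x_{n,\alpha}^j-1|$, which by \eqref{eq:StaSadAs} is $\caO(b(n)\log n/\alpha(n))\cdot(\text{small})$, combine with Theorem \ref{thm:main_thm2_old} and re-optimise; the power $n^{5/12}\alpha^{7/12}$ in the denominator comes from balancing this against the tail-integral contribution in the Fourier estimate.

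The main obstacle I anticipate is the tail estimate for the characteristic function of $T_1$ — i.e. showing $\bigl|\E{e^{\ii s T_1}}\bigr|$ is summably small for $s$ away from $0$, uniformly in $n$, when the summands $\mu_j(n)=\vartheta x_{n,\alpha}^j/j$ decay only like $1/j$ for small $j$ and the number of terms $\alpha(n)-b(n)$ is large. The quantity $\mathrm{Re}\sum_{j=b(n)+1}^{\alpha(n)}\mu_j(n)(1-\cos(sj))$ must be bounded below; near $s=\pm\pi$ the terms with even vs.\ odd $j$ can partially cancel, and near $s=0$ one needs the largest $j\approx\alpha(n)$ (where $x_{n,\alpha}^j$ is largest, of order $\mu_{\alpha(n)}\alpha(n)$) to dominate. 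This is exactly the type of estimate handled in \cite{MaPe16} and \cite{BeScZe17}, and I would lift it from there, but tracking the uniformity in the window width $b(n)$ up to $o(\alpha(n))$ — rather than $o(\alpha(n)/\log n)$ — is where the new work, and the exponent $5/12$, resides. A secondary technical point is that $\mathcal D[0,\infty)$-type care is not needed here (this is a static statement), but one does need the local limit theorem to be genuinely uniform over a $b(n)$-window whose width is itself growing, which forces the Gaussian-regime cutoff in $s$ to be chosen carefully against $b(n)$.
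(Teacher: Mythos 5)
Your overall architecture is right: the conditioning relation for $\boldsymbol{C}_b$ given $\sum_j jY_j = n$, a tail bound on $T_0 = \sum_{j\le b(n)} jY_j$, and a local-limit/saddle-point estimate for the ratio $\Pb{T_1 = n-r}/\Pb{T_0+T_1 = n}$ — this is indeed how the paper proceeds (equations \eqref{eq:cond_relation_dtv}, \eqref{eq:dtv_with_Tn}, Lemma \ref{lem:rho}, and Proposition \ref{prop:SPmethod}). But two genuine gaps remain, and they are precisely where the new work in this theorem lives.

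First, your moment estimates for $T_0$ are not uniform over the whole claimed range $b(n)=o(\alpha(n))$. You assert $\E{T_0}\approx b(n)$ and $\mathrm{Var}(T_0)\approx b(n)^2$, but $\E{T_0}=\vartheta\sum_{j\le b(n)}x_{n,\alpha}^j$ and the factor $x_{n,\alpha}^{b(n)}\sim\bigl(\tfrac{n}{\vartheta\alpha}\log\tfrac{n}{\vartheta\alpha}\bigr)^{b(n)/\alpha(n)}$ is not close to $1$ once $b(n)\gtrsim\alpha(n)/\log n$. The paper's bound \eqref{eq:E(Tb)} gives $\E{T_{0b(n)}^{(n)}}\approx\frac{\alpha(n)}{\log n}\bigl(\tfrac{n}{\vartheta\alpha}\log\tfrac{n}{\vartheta\alpha}\bigr)^{b/\alpha}$, which can be as large as $\alpha(n)n^{\epsilon}/\log n$; your ``$\approx b(n)$'' is valid only under the stronger hypothesis $b(n)=o(\alpha(n)/\log n)$, i.e.\ only in the regime where the old Theorem \ref{thm:main_thm2_old} already applies. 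The $5/12$ and $7/12$ exponents in \eqref{eq:thm:main_dtv1} in fact enter directly from the admissibility norm $|\!|\!|f_{1,n}|\!|\!|_n \le r\, n^{-5/12}\alpha^{-7/12}$ applied to $f_{1,n}(z)=z^r$ with $r\le\rho\E{T_{0b(n)}^{(n)}}$, not from any balancing in a Fourier tail integral as you suggest; so identifying the source of those exponents matters.

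Second, and more seriously, you assert that the comparison of the exponents is ``controlled by \dots the Manstavi\v{c}ius--Petuchovas error term from \eqref{eq:StaSadAs}.'' The paper explicitly flags that this is \emph{not} enough: after reducing to the ratio $x_{n,\alpha}^{-r}x_n^r\exp\bigl(-\vartheta\sum_{j\le b}\tfrac{1}{j}(x_n^j-x_{n,\alpha}^j)\bigr)$ (here $x_n$ solving \eqref{eq:def_xn}), one needs $x_n^b-x_{n,\alpha}^b$ to accuracy roughly $e^{by_{n,\alpha}}/(n\log n)$, which is far better than the $\caO(\log\log n/\log n)$ accuracy \eqref{eq:StaSadAs} provides for $\alpha(n)\log x_{n,\alpha}$. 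This is exactly why the paper introduces the auxiliary transcendental equation \eqref{eq:def_y_n_alpha} and proves Lemma \ref{lem:NeuerSattel3}: both $\log x_n$ and $\log x_{n,\alpha}$ are pinned to the \emph{same} quantity $y_{n,\alpha}$ with controlled, very small corrections, so the difference $\log x_n-\log x_{n,\alpha}=\caO\bigl(\tfrac{e^{by_{n,\alpha}}}{n\log n}\bigr)$ is orders of magnitude smaller than either error term alone would suggest. Without some version of this cancellation your bound on $\vartheta\sum_{j\le b}\tfrac{1}{j}(x_n^j-x_{n,\alpha}^j)$ does not go to zero, let alone at the claimed rate, and the proof collapses for $b(n)$ beyond $o(\alpha(n)/\log n)$ — which is the only nontrivial part of the theorem. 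You would need to supply a substitute for Lemma \ref{lem:NeuerSattel3}.
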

The proof of this theorem is given in Section \ref{sect:proof_dtv}.

\subsection{Central Limit Theorem for Cycle Numbers}
For the proof of Proposition~\ref{prop:LongestDiv}, we require a central limit theorem for the cycle counts in the case $\E{C_m}\to\infty$.
The main result of this section is to establish this theorem. 
Explicitly, we prove the following.
\begin{thm}
\label{thm:Haupt}
Let $m_{k}:\mathbb{N\rightarrow\mathbb{N}}$ for
$1\leq k\leq K$ such that $m_{k}\left(n\right)\leq\alpha\left(n\right)$
and $m_{k_{1}}\left(n\right)\neq m_{k_{2}}\left(n\right)$ if $k_{1}\neq k_{2}$
for large $n$. Suppose that
\[
\mu_{m_{k}\left(n\right)}\left(n\right)\rightarrow\infty
\]
for all $k$. We then have as $n\to\infty$
\[
\left(\frac{C_{m_{1}\left(n\right)}-\mu_{m_{1}\left(n\right)}\left(n\right)}{\sqrt{\mu_{m_{1}\left(n\right)}\left(n\right)}},\dots,\frac{C_{m_{K}\left(n\right)}-\mu_{m_{K}\left(n\right)}\left(n\right)}{\sqrt{\mu_{m_{K}\left(n\right)}\left(n\right)}}\right)\xrightarrow{d}\left(N_{1},\dots,N_{K}\right),
\]
with $N_{1},\ldots, N_K$ independent standard normal distributed random variables.
%
\end{thm}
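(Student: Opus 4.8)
The plan is to prove the multivariate central limit theorem via the moment/cumulant method combined with the saddle-point machinery already developed for these models. The key point is that $\mu_{m_k(n)}(n)\to\infty$ forces $x_{n,\alpha}^{m_k(n)}\to\infty$, i.e.\ $m_k(n)$ is of order strictly smaller than $\alpha(n)$ up to logarithmic corrections (more precisely $m_k(n)=o(\alpha(n)/\log n)$), so that the cycle counts $C_{m_k(n)}$ behave like nearly independent Poisson variables with large means; for a Poisson variable with mean $\lambda\to\infty$, $(C-\lambda)/\sqrt\lambda$ is asymptotically standard normal, and independence for distinct lengths should follow from the approximate independence of cycle counts. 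So the strategy is: first establish that the joint factorial moments of $(C_{m_1(n)},\dots,C_{m_K(n)})$ under $\PTaS$ match those of independent Poisson variables with means $\mu_{m_k(n)}$ up to errors that vanish after the CLT rescaling, and then invoke the standard fact that convergence of all (suitably rescaled) moments to those of a Gaussian vector implies convergence in distribution.

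Concretely, I would compute $\ETaS\big[\prod_{k=1}^K (C_{m_k(n)})_{(r_k)}\big]$, where $(x)_{(r)}=x(x-1)\cdots(x-r+1)$ denotes the falling factorial, using the standard formula expressing such joint factorial moments in terms of the constrained partition-function quotient: for distinct lengths $m_1,\dots,m_K$,
\[
\ETaS\!\Big[\prod_{k=1}^K (C_{m_k})_{(r_k)}\Big]
=\Big(\prod_{k=1}^K \frac{\vartheta^{r_k}}{m_k^{r_k}}\Big)\,
\frac{Z_{n-\sum_k r_k m_k,\,\alpha}}{Z_{n,\alpha}}\cdot n_{(\sum_k r_k m_k)}\big/\,n_{(\sum_k r_k m_k)}
\]
— the point being that the combinatorial prefactor is exactly $\prod_k (\vartheta^{r_k}/m_k^{r_k})$ times a ratio $Z_{n-s,\alpha}/Z_{n,\alpha}$ with $s=\sum_k r_k m_k$, valid whenever $s\le n$ and all $m_k\le\alpha(n)$. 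Then I would use the saddle-point asymptotics for $Z_{n,\alpha}$ from \cite{BeScZe17,Sc18} (the same input that yields \eqref{eqn:exp asympt}) to show $Z_{n-s,\alpha}/Z_{n,\alpha}=x_{n,\alpha}^{-s}(1+o(1))$ uniformly for the relevant range of $s$, which gives
\[
\ETaS\!\Big[\prod_{k=1}^K (C_{m_k(n)})_{(r_k)}\Big]\sim\prod_{k=1}^K \mu_{m_k(n)}^{r_k},
\]
matching the independent-Poisson factorial moments. Converting falling-factorial moments to ordinary moments, centering by $\mu_{m_k(n)}$ and scaling by $\sqrt{\mu_{m_k(n)}}$, the leading terms reproduce exactly the joint moments of $(N_1,\dots,N_K)$ in the limit; since the standard normal (and hence the Gaussian vector) is determined by its moments, this yields convergence in distribution.

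The main obstacle is controlling the error terms in $Z_{n-s,\alpha}/Z_{n,\alpha}=x_{n,\alpha}^{-s}(1+o(1))$ \emph{uniformly} over $s$ up to the scale needed, and checking that these errors remain negligible after multiplying through the $\sqrt{\mu}$-rescaling — this is precisely where the previous paper needed the extra hypothesis $\alpha(n)\ge n^{1/7+\delta}$, and removing it requires the sharper saddle-point control from Lemma~\ref{lem:saddle_point_with_c} (in particular the error bound $\caO(\log\log n/\log n)$ in \eqref{eq:StaSadAs} and the asymptotics \eqref{eq:lambda2alt} for the second moment $\sum_j j\,x_{n,\alpha}^j$). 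A secondary technical point is that $s=\sum_k r_k m_k$ may itself grow with $n$, so one must verify $s=o(n)$ (which follows since $\mu_{m_k(n)}\to\infty$ forces $m_k(n)^2=\caO(n\log n /\mu_{m_k(n)})=o(n\log n)$, hence $m_k(n)=o(\sqrt{n\log n})$ and in fact $m_k(n)=o(\alpha(n))$ by Proposition~\ref{prop:asymptotic_mu}) and then push the uniformity of the partition-function ratio to that range; a fixed finite number $K$ of lengths and fixed moment orders $r_k$ keeps all of this manageable. Once the uniform ratio estimate is in hand, assembling the moments and quoting the method-of-moments criterion for the Gaussian is routine.
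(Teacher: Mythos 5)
Your route (joint factorial moments via partition-function ratios, then method of moments) is genuinely different from the paper's proof, which works directly with the moment generating function $\ETaS[\exp(sC_{m(n)}/\sqrt{\mu_{m(n)}})]$, rewrites it through P\'olya's formula as a coefficient extraction with a triangular array $q_{j,n}$ perturbed only at $j=m(n)$, applies the general saddle-point result (Proposition~\ref{prop:SPmethod}), and then Taylor-expands the exponent $h_n(s)$ around $s=0$ using derivatives of the perturbed saddle point $x_{n,\bsq}(s)$. The MGF route has a structural advantage that your route lacks, and that is exactly where your argument has a gap.

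The gap is in the step ``$\ETaS[\prod_k(C_{m_k})_{(r_k)}]\sim\prod_k\mu_{m_k}^{r_k}$, hence after centering by $\mu_{m_k}$ and scaling by $\sqrt{\mu_{m_k}}$ we recover the Gaussian moments.'' A relative error of $o(1)$ in the factorial moments is far too weak for this conclusion. Already for the variance, writing $a_r:=\ETaS[(C_m)_{(r)}]=\mu^r(1+\epsilon_r)$ with $\mu=\mu_{m(n)}$, one has
\begin{equation*}
\mathrm{Var}(C_m)=a_2+a_1-a_1^2=\mu^2\big(\epsilon_2-2\epsilon_1-\epsilon_1^2\big)+\mu(1+\epsilon_1).
\end{equation*}
For $\mathrm{Var}(C_m)\sim\mu$ one needs $|\epsilon_2-2\epsilon_1|=o(1/\mu)$, and for the $p$-th centered and $\sqrt\mu$-normalized moment the analogous requirement is that a signed, finite-difference combination of the $\epsilon_j$ be $o(\mu^{-p/2})$. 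Since $\mu\to\infty$, $\epsilon_r=o(1)$ is nowhere near enough: either the $\epsilon_r$ must individually be $o(\mu^{-p/2})$ (a very strong and, from Proposition~\ref{prop:SPmethod}, not obviously available bound on $Z_{n-rm,\alpha}/Z_{n,\alpha}$), or one must prove a structured expansion of $\epsilon_r$ in $r$ so that the needed alternating-sum cancellations occur. You flag this as ``the main obstacle'' but propose no mechanism for the cancellation; the sharpened Lemma~\ref{lem:saddle_point_with_c} controls the leading behaviour of the saddle point, not second-order differences of $\log Z_{n-s,\alpha}$ in $s$. The paper's MGF approach sidesteps the issue because the cancellation is automatic: differentiating $h_n(s)$ via the implicit function theorem (Lemma~\ref{lem:SPabl}) yields $h_n(s)=h_n(0)+s\sqrt{\mu}+\tfrac{s^2}{2}+o(1)$, i.e.\ the second-order Taylor coefficient is identified exactly, which is precisely the fine information the moment route would need.

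A secondary but substantive error: your claim that $\mu_{m_k(n)}\to\infty$ forces $m_k(n)=o(\alpha(n)/\log n)$ is backwards. The map $m\mapsto\mu_m(n)=\vartheta x_{n,\alpha}^m/m$ is U-shaped with minimum near $m^\ast\approx 1/\log x_{n,\alpha}\approx\alpha(n)/\log(n/\alpha(n))$, and in the regime $m=o(\alpha(n)/\log n)$ one actually has $\mu_m\to 0$ (or $\mu_m$ bounded, if $m$ stays bounded). The hypothesis $\mu_{m_k(n)}\to\infty$ pushes $m_k(n)$ toward $\alpha(n)$ (on the increasing branch), and in particular implies $\mu_{\alpha(n)}\to\infty$; this is why the paper treats this case as the one where $\ell_k=\alpha(n)$ with high probability. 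The related chain ``$m_k(n)^2=\caO(n\log n/\mu_{m_k(n)})$'' only holds at $m=\alpha(n)$ via Proposition~\ref{prop:asymptotic_mu}, not for general $m$. This misconception does not by itself break the moment computation, but together with the missing cancellation analysis, the proposal as written does not establish the theorem.
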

This theorem was proven in \cite{BeScZe17} under the additional assumption
\begin{equation}
n^{-\frac{5}{12}}\alpha\left(n\right)^{-\frac{7}{12}}\frac{x_{n,\vartheta}^{m_{k}\left(n\right)}}{\sqrt{\mu_{m_{k}\left(n\right)}\left(n\right)}}\rightarrow0.
\label{eq:CCcltAss}
\end{equation}
In Section \ref{sect:proof_clt} we present a proof that does not 
require the addidional assumption. 

\section{Proofs}
\label{sec:proofs}
\subsection{Generating functions and the saddle point method}
\label{sec:Generating}
Generating functions and their connection with analytic combinatorics form the backbone of the proofs in this paper. 
More precisely, we will determine formal generating functions for all relevant moment-generating functions and then 
use the saddle-point method to determine the asymptotic behaviour of these moment-generating functions as $n\to\infty$.

Let $\left(a_{n}\right)_{n\in\mathbb{N}}$ be a sequence of complex numbers. Then its ordinary generating function is defined as the formal power series
\[
f\left(z\right):=\sum_{n=0}^{\infty}a_{n}z^{n}.
\]
The sequence may be recovered by formally extracting the coefficients
\[
\left[z^n\right]f\left(z\right):=a_{n}
\]
for any $n$. The first step is now to consider a special case of P{\'o}lya's Enumeration Theorem, see \cite[\S 16, p.\:17]{Po37}, 
which connects permutations with a specific generating function.
\begin{lem}
\label{lem:polya}
Let $(q_j)_{j\in\N}$ be a sequence of complex numbers. 
We then have  the following identity between formal power series in $z$,
\begin{equation}
\label{eq:symm_fkt}
\exp\left(\sum_{j=1}^{\infty}\frac{q_j z^j}{j}\right)
=\sum_{k=0}^\infty\frac{z^k}{k!}\sum_{\sigma\in S_k}\prod_{j=1}^{k}
q_{j}^{C_j},
\end{equation}
where $C_j=C_j(\sigma)$ are the cycle counts. If either of the
series in \eqref{eq:symm_fkt} is absolutely convergent, then so is
the other one.
\end{lem}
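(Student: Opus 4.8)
\textbf{Plan for the proof of Lemma~\ref{lem:polya} (P\'olya enumeration / exponential formula).}
The statement is the classical exponential formula for the cycle index of the symmetric group, so the plan is to prove the identity of formal power series first and then deal with the convergence addendum separately. First I would fix the sequence $(q_j)_{j\in\N}$ and work in the ring of formal power series in $z$ (with coefficients in the polynomial ring $\C[q_1,q_2,\dots]$, or equivalently treat the $q_j$ as complex parameters and compare coefficients). The engine of the proof is the factorisation of the exponential over the index $j$: writing
\[
\exp\left(\sum_{j=1}^{\infty}\frac{q_j z^j}{j}\right)
=\prod_{j=1}^{\infty}\exp\left(\frac{q_j z^j}{j}\right)
=\prod_{j=1}^{\infty}\sum_{c_j=0}^{\infty}\frac{1}{c_j!}\left(\frac{q_j}{j}\right)^{c_j} z^{j c_j}.
\]
This is a legitimate identity of formal power series because, for each fixed power $z^k$, only finitely many factors $j$ (namely $j\le k$) and finitely many exponents $c_j$ contribute, so the infinite product is well defined coefficient-wise.

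Next I would extract $[z^k]$ from the right-hand product. A monomial $z^k$ arises from every choice of nonnegative integers $(c_1,c_2,\dots)$ with $\sum_j j c_j = k$ and only finitely many $c_j$ nonzero, contributing $\prod_j \frac{1}{c_j!}\,j^{-c_j}\, q_j^{c_j}$. Hence
\[
\left[z^k\right]\exp\left(\sum_{j=1}^{\infty}\frac{q_j z^j}{j}\right)
=\sum_{\substack{(c_j):\ \sum_j j c_j = k}}\ \prod_{j=1}^{k}\frac{q_j^{c_j}}{c_j!\, j^{c_j}}.
\]
The remaining step is the combinatorial identity
\[
\frac{1}{k!}\sum_{\sigma\in S_k}\prod_{j=1}^{k} q_j^{C_j(\sigma)}
=\sum_{\substack{(c_j):\ \sum_j j c_j = k}}\ \prod_{j=1}^{k}\frac{q_j^{c_j}}{c_j!\, j^{c_j}},
\]
which follows by grouping permutations of $\{1,\dots,k\}$ according to their cycle type $(c_1,\dots,c_k)$ and using the standard fact that the number of permutations in $S_k$ with exactly $c_j$ cycles of length $j$ for each $j$ equals $k!\big/\prod_{j} \big(c_j!\, j^{c_j}\big)$. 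Matching the two coefficient expressions for every $k\ge 0$ gives \eqref{eq:symm_fkt}.

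For the convergence statement, I would argue as follows. All coefficients on both sides are, after replacing each $q_j$ by $|q_j|$, nonnegative, and the manipulations above (factoring the exponential, expanding each factor, regrouping) are term-by-term identities among series with nonnegative terms; therefore the same identity holds for the series in $|q_j|$ and hence the partial sums of one side dominate those of the other. Consequently absolute convergence (at a given value of $z$, equivalently convergence of the majorant series) of either side forces absolute convergence of the other, with equal sums. The only mildly delicate point here — and the one I would be most careful about — is justifying the interchange of the infinite product $\prod_j \exp(q_j z^j/j)$ with the summations: at the formal level this is automatic because each coefficient of $z^k$ is a finite sum, but to transfer it to genuine convergence one invokes the nonnegativity/majorant argument (Tonelli-type reasoning for sums) rather than any analytic estimate. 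I expect no substantive obstacle beyond bookkeeping; the essential content is just the exponential formula, and the proof is short.
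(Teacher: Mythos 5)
The paper does not actually prove Lemma~\ref{lem:polya}: it is stated as a special case of P\'olya's Enumeration Theorem and a citation to Pólya (1937) is given, after which the authors move directly to extracting coefficients. So there is no in-paper proof to compare against, and your task is to supply one from scratch. Your argument for the formal power series identity is the standard exponential-formula proof and is correct: factor the exponential over $j$, expand each factor, collect $[z^k]$ over integer partitions $(c_j)$ with $\sum_j j c_j = k$, and then invoke the cycle-type count $\#\{\sigma\in S_k:\ (C_1,\dots,C_k)(\sigma)=(c_1,\dots,c_k)\}=k!/\prod_j c_j!\,j^{c_j}$. The local-finiteness remark that justifies the infinite product coefficient-wise is exactly the right thing to say.

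The convergence addendum deserves one more sentence of care. Your majorant argument shows that if one replaces $q_j$ by $|q_j|$ and $z$ by $|z|$, both sides of \eqref{eq:symm_fkt} become the same sum of nonnegative terms, so one is finite iff the other is — that is the correct proof, and it is the notion of absolute convergence under which the lemma is true (and it is what the paper later uses, since its $q_j$'s are nonnegative). But note that if one instead reads ``absolutely convergent'' in the weaker, coefficient-wise sense $\sum_k |a_k|\,|z|^k<\infty$ for the right-hand side, the converse implication can fail because of internal cancellation in $\sum_\sigma\prod_j q_j^{C_j}$: with $q_j=-1$ for all $j$, the right-hand side collapses to the polynomial $1-z$ (since $\sum_{\sigma\in S_k}(-1)^{c(\sigma)}=0$ for $k\geq 2$), which converges absolutely for every $z$, while $\sum_j |q_j||z|^j/j=\sum_j |z|^j/j$ diverges for $|z|\geq 1$. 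So when writing this up you should make explicit that ``absolutely convergent'' refers to the majorant series in $|q_j|$ and $|z|$; your phrase ``the partial sums of one side dominate those of the other'' should be corrected to say that the two majorant series are equal term by term, and each original series is dominated by its majorant. With that clarification your proof is complete.
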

Extracting the $n$th coefficient yields
\begin{equation}
\label{eq:relation to perms}	
\left[z^n\right]\exp\left(\sum_{j=1}^{\infty}\frac{q_j z^j}{j}\right) 
= 
\frac{1}{n!}\sum_{\sigma\in S_n}\prod_{j=1}^{n}q_{j}^{C_j}.
\end{equation}
%
With this formulation, the parameters $(q_j)$ can depend on the system size $n$. 
For instance, setting $q_j =\vartheta\,\mathbbm{1}_{\left\{j\leq \alpha(n)\right\}}$,
we obtain 
\begin{align}
\label{eq:cNorm}
Z_{n,\alpha} 
= 
\left[z^n\right]\exp\left(\vartheta\sum_{j=1}^{\alpha(n)}\frac{z^j}{j}
\right)
\end{align}
with $Z_{n,\alpha}$ as in \eqref{eq:def_Ewens_measure_alpha}.
Similarly, we can get an expression for the moment generating function of $C_{m(n)}$, where $(m(n))_{n\in\N}$ is an integer sequence with $m(n)\leq \alpha(n)$.
Indeed, setting $q_{m(n)} =\vartheta e^{s}$ and $q_j =\vartheta\,\mathbbm{1}_{\left\{j\leq \alpha(n)\right\}}$ for $j\neq m(n)$, we get
\begin{align}
\label{eq:moment_Cm}
\ETa{e^{s C_{m(n)}}}
= 
\frac{1}{Z_{n,\alpha} }
\left[z^n\right]\exp\left(\vartheta (e^s-1)\frac{z^{m(n)}}{m(n)}\right) \exp\left(\vartheta\sum_{j=1}^{\alpha(n)}\frac{z^j}{j}\right).
\end{align}
In view of \eqref{eq:cNorm} and \eqref{eq:moment_Cm}, we can compute the asymptotic behaviour of $Z_{n,\alpha}$ (and similar expressions) 
by extracting the coefficients of power series as in \eqref{eq:cNorm} and \eqref{eq:moment_Cm}.
One way to extract these coefficients is the saddle point method,  a standard tool in asymptotic analysis. 
The basic idea is to rewrite the 
expression \eqref{eq:relation to perms} as a complex contour integral and
choose the path of integration in a convenient way. 
The details of this procedure depend on the situation at hand 
and need to be done on a case by case basis. 
A general overview over the saddle-point method can be found 
in \cite[page~551]{FlSe09}. 
An important part of this computations is typically to find a solution of the so-called saddle-point equation.
%
%
%
%

We now treat the most general case of the 
saddle point method that is relevant for the present situation. 
Let $\bsq = (q_{j,n})_{1 \leq j \leq \alpha(n), n \in 
\N}$ be a triangular array. We assume that all 
$q_{j,n}$ 
are nonnegative, 
real numbers and  that for each $n\in\N$ there exists a $j$ such that $q_{j,n}>0$. 
We then
define $x_{n,\bsq}$ as the unique positive solution of
\begin{align}
n = \sum_{j=1}^{\alpha(n)} q_{j,n} x_{n,\bsq}^j.
\label{eq:GenSaddle}
\end{align} 
Let further
\begin{align}
\lambda_{p,n}
:=
\lambda_{p,n,\alpha,\boldsymbol{q}} :=\sum_{j=1}^{\alpha(n)} q_{j,n}j^{p-1}x_{n,\bsq}^j,
\label{eq:def_lambda_p}
\end{align} 
where $p\geq 1$ is a natural number. Due
to Equation \eqref{eq:GenSaddle},
\begin{equation}
\lambda_{p,n}\leq n\left(\alpha\left(n\right)\right)^{p-1}\label{eq:LambdaP}
\end{equation}
holds for all $p \geq 1$. 
%
%
We now define 
\begin{defn}
\label{def:admQ}A triangular array $\boldsymbol{q}$ is called admissible
if the following three conditions are satisfied:
\begin{enumerate}
    \item \label{enu:sadAppr}It satisfies
        \[
        \alpha\left(n\right)\log\left(x_{n,\boldsymbol{q}}\right)\approx\log\left(\frac{n}{\alpha\left(n\right)}\right).
        \]
    \item \label{enu:lam2Appr}We have
        \[
        \lambda_{2,n}\approx n\alpha\left(n\right).
        \]
    \item \label{enu:qNichtzuklein}
        There exist a non-negative sequence $\left(b_{n}\right)_{n\in\mathbb{N}}$
        and constants $\delta,c>0$ such that $b\left(n\right)/\alpha\left(n\right)<1-\delta$
        and $q_{j,n}\geq c>0$ for all $j\geq b\left(n\right)$ hold for $n$
        large enough.
\end{enumerate}
\end{defn}
Note that condition~\eqref{enu:sadAppr} implies in particular that 
$x_{n,\boldsymbol{q}}>1$  and that $ x_{n,\bsq} \to 1$ as $n\to\infty$. 
%
%
%
Let $B_r(0)$ denote the ball with center $0$ and 
radius $r$ in the complex plane. 
\begin{defn}
\label{def:admF}
Let $\boldsymbol{q}$ be an admissible triangular array. 
Then a sequence $\left(f_{n}\right)_{n\in\mathbb{N}}$ of functions is
called admissible (w.r.t. $\boldsymbol{q}$) if it satisfies the following
three conditions:
\begin{enumerate}
\item \label{enu:hol}
    There is $\delta>0$ such that $f_{n}$ is holomorphic
    on the disc $B_{x_{n,\boldsymbol{q}}+\delta}\left(0\right)$ if $n\in\mathbb{N}$
    is large enough.
\item \label{enu:FdSad}
    There exist constants $K,N>0$ such that
    \begin{equation}
    \sup_{z\in\partial B_{x_{n,\boldsymbol{q}}}\left(0\right)}\left|f_{n}\left(z\right)\right|\leq n^{K}\left|f_{n}\left(x_{n,\boldsymbol{q}}\right)\right|\label{eq:adm2}
    \end{equation}
    for all $n\geq N$.
\item \label{enu:Flieb}
    With the definition
    \begin{equation}
    |\!|\!|f_{n}|\!|\!|_{n}:=n^{-\frac{5}{12}}\left(\alpha\left(n\right)\right)^{-\frac{7}{12}}\sup_{\left|\varphi\right|\leq n^{-\frac{5}{12}}\left(\alpha\left(n\right)\right)^{-\frac{7}{12}}}\frac{\left|f_{n}^{\prime}\left(x_{n,\boldsymbol{q}}\mathrm{e}^{\mathrm{i}\varphi}\right)\right|}{\left|f_{n}\left(x_{n,\boldsymbol{q}}\right)\right|},
    \label{eq:adm3}
    \end{equation}
    we have $|\!|\!|f_{n}|\!|\!|_{n}\to0$ as $n\to\infty$.
\end{enumerate}
\end{defn}
%
%
%
%
We are now in the position to formulate our general
saddle point result.
\begin{prop}[{\cite[Proposition 3.2]{BeScZe17}}]
\label{prop:SPmethod} 
Let $\boldsymbol{q}$ be an admissible
triangular array and $\left(f_{n}\right)_{n\in\mathbb{N}}$ an admissible
sequence of functions. Then we have as $n\to\infty$
\begin{align}
\left[z^{n}\right]f_{n}\left(z\right)\exp\left(\sum_{j=1}^{\alpha\left(n\right)}\frac{q_{j,n}}{j}z^{j}\right)
=
\frac{f_{n}\left(x_{n,\boldsymbol{q}}\right)e^{\lambda_{0,n}}}{x_{n,\boldsymbol{q}}^{n}\sqrt{2\pi\lambda_{2,n}}}\left(1+\mathcal{O}\left(\frac{\alpha\left(n\right)}{n}+|\!|\!|f_{n}|\!|\!|_{n}\right)\right).
\label{eq:prop:GenSaddle}
\end{align}
\end{prop}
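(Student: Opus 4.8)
The plan is to realise the coefficient extraction as a Cauchy contour integral over the circle $\partial B_{x_{n,\boldsymbol q}}(0)$ of radius $r_n := x_{n,\boldsymbol q}$, i.e.
\begin{align}
\left[z^{n}\right]f_{n}(z)\exp\!\Big(\sum_{j=1}^{\alpha(n)}\tfrac{q_{j,n}}{j}z^{j}\Big)
= \frac{1}{2\pi}\int_{-\pi}^{\pi} f_n\big(r_n e^{\ii\varphi}\big)\,
\exp\!\Big(\sum_{j=1}^{\alpha(n)}\tfrac{q_{j,n}}{j}r_n^{j}e^{\ii j\varphi}\Big)\,
r_n^{-n}e^{-\ii n\varphi}\,\dd\varphi,
\label{eq:pp-cauchy}
\end{align}
and then apply the saddle-point principle: the integrand is maximised (in modulus) near $\varphi=0$ because $r_n$ solves the saddle-point equation \eqref{eq:GenSaddle}, which is exactly the stationarity condition $\frac{\dd}{\dd\varphi}\big|_{\varphi=0}$ of $\sum_j \frac{q_{j,n}}{j}r_n^j e^{\ii j\varphi} - \ii n\varphi$ vanishing. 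First I would split the integration range into the \emph{central part} $|\varphi|\le \varphi_n$ and the \emph{tail} $\varphi_n<|\varphi|\le\pi$, with the natural choice $\varphi_n := n^{-5/12}\alpha(n)^{-7/12}$ dictated by condition \eqref{enu:Flieb} of Definition~\ref{def:admF}. Write $g_n(\varphi) := \sum_{j=1}^{\alpha(n)}\frac{q_{j,n}}{j}r_n^j(e^{\ii j\varphi}-1)$, so that the phase function in \eqref{eq:pp-cauchy} is $g_n(\varphi)-\ii n\varphi$ (up to the constant $e^{\lambda_{0,n}}$); note $\mathrm{Re}\,g_n(\varphi) = -\sum_j \frac{q_{j,n}}{j}r_n^j(1-\cos j\varphi)\le 0$.

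For the \textbf{central part}, I would Taylor-expand $g_n(\varphi)$ around $\varphi=0$. The linear term is $\ii\varphi\sum_j q_{j,n}r_n^j = \ii n\varphi$ by \eqref{eq:GenSaddle}, cancelling the $-\ii n\varphi$ exactly; the quadratic term is $-\tfrac12\varphi^2\sum_j q_{j,n}j\, r_n^j = -\tfrac12\lambda_{2,n}\varphi^2$. So on $|\varphi|\le\varphi_n$ the phase is $-\tfrac12\lambda_{2,n}\varphi^2 + \text{(error)}$ with error controlled by the third derivative, i.e.\ of size $\caO(\lambda_{3,n}\varphi^3)\le \caO(n\alpha(n)^2\varphi_n^3)$ via the bound \eqref{eq:LambdaP}; with $\varphi_n = n^{-5/12}\alpha(n)^{-7/12}$ one checks $n\alpha(n)^2\varphi_n^3 = n^{-1/4}\alpha(n)^{-1/4}\to 0$. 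Likewise $\lambda_{2,n}\varphi_n^2 = \lambda_{2,n}\, n^{-5/6}\alpha(n)^{-7/6}$, and since $\lambda_{2,n}\approx n\alpha(n)$ by admissibility condition~\eqref{enu:lam2Appr} this is $\approx n^{1/6}\alpha(n)^{-1/6}\to\infty$, so extending the Gaussian integral from $[-\varphi_n,\varphi_n]$ to all of $\R$ costs only an exponentially small error. The factor $f_n(r_n e^{\ii\varphi})$ is handled by writing $f_n(r_n e^{\ii\varphi}) = f_n(r_n) + \big(f_n(r_n e^{\ii\varphi})-f_n(r_n)\big)$, bounding the difference by $\varphi_n\cdot\sup_{|\varphi|\le\varphi_n}|f_n'(r_n e^{\ii\varphi})|$, which is precisely $|\!|\!|f_n|\!|\!|_n\cdot|f_n(r_n)|$ — hence the $|\!|\!|f_n|\!|\!|_n$ term in the error. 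Carrying out the Gaussian integral $\frac{1}{2\pi}\int_\R e^{-\lambda_{2,n}\varphi^2/2}\dd\varphi = (2\pi\lambda_{2,n})^{-1/2}$ yields the claimed main term $f_n(x_{n,\boldsymbol q})e^{\lambda_{0,n}}x_{n,\boldsymbol q}^{-n}(2\pi\lambda_{2,n})^{-1/2}$.

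For the \textbf{tail} $\varphi_n<|\varphi|\le\pi$, the goal is to show the contribution is negligible compared to the main term, i.e.\ $o\big((2\pi\lambda_{2,n})^{-1/2}|f_n(r_n)|e^{\lambda_{0,n}}r_n^{-n}\big)$. Here condition \eqref{enu:FdSad} of Definition~\ref{def:admF} bounds $|f_n(r_n e^{\ii\varphi})|\le n^K|f_n(r_n)|$ uniformly, so it suffices to show $\mathrm{Re}\,g_n(\varphi) \le -C\log n - \tfrac12\log\lambda_{2,n}$ for a suitable growing bound, uniformly on the tail. The key estimate is $-\mathrm{Re}\,g_n(\varphi) = \sum_{j=1}^{\alpha(n)}\frac{q_{j,n}}{j}r_n^j(1-\cos j\varphi)$; I expect this is where admissibility condition \eqref{enu:qNichtzuklein} enters decisively — it guarantees $q_{j,n}\ge c>0$ for $j$ in a range $[b(n),(1-\delta)\alpha(n)]$ of linear length, on which $r_n^j\approx (n/\alpha(n))^{\Theta(1)}$ grows like a power of $n$ by condition \eqref{enu:sadAppr}, and a standard Fejér/averaging argument shows $\sum_j(1-\cos j\varphi)\gtrsim$ (length of range) for $|\varphi|$ bounded away from multiples of $2\pi$ on the relevant scale. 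I would split the tail further — say $\varphi_n<|\varphi|\le n^{-\kappa}$ for small $\kappa$, then $n^{-\kappa}<|\varphi|\le\pi$ — treating the near-origin tail by the quadratic lower bound $1-\cos j\varphi\ge c j^2\varphi^2$ (valid when $j\varphi$ is small) giving $-\mathrm{Re}\,g_n(\varphi)\gtrsim\lambda_{2,n}\varphi^2 \gtrsim \lambda_{2,n}\varphi_n^2\to\infty$ faster than any power of $\log n$, and the far tail by the linear-length argument above. \textbf{The main obstacle} is exactly this tail estimate: establishing a quantitative lower bound on $-\mathrm{Re}\,g_n(\varphi)$ that beats the polynomial factor $n^K$ from \eqref{eq:adm2} and the $\lambda_{2,n}^{1/2}$ from the main term, uniformly across all of $(\varphi_n,\pi]$, using only the three admissibility conditions on $\boldsymbol q$ — in particular handling the regime where $\varphi$ is not small and the oscillations $\cos j\varphi$ could conspire; the resolution is the Fejér-kernel positivity argument on the guaranteed block of indices from condition \eqref{enu:qNichtzuklein}. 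Since this is cited as \cite[Proposition 3.2]{BeScZe17}, I would at this point either reproduce that argument or simply invoke it.
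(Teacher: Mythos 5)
The paper does not prove this proposition: it is quoted verbatim as \cite[Proposition 3.2]{BeScZe17} and invoked as a black box, so there is no in-paper proof to compare against. You correctly recognise this at the end of your proposal. Your sketch of how the proof of \cite[Proposition 3.2]{BeScZe17} must go --- Cauchy integral over $\partial B_{x_{n,\boldsymbol q}}(0)$, split at $\varphi_n = n^{-5/12}\alpha(n)^{-7/12}$, cancellation of the linear term via \eqref{eq:GenSaddle}, Gaussian main term $(2\pi\lambda_{2,n})^{-1/2}$, the $|\!|\!|f_n|\!|\!|_n$ term entering through the modulus of continuity of $f_n$ on the central arc, and condition~\eqref{enu:qNichtzuklein} fuelling a Fej\'er-type lower bound on $-\mathrm{Re}\,g_n(\varphi)$ to kill the tail against the $n^K$ allowed by \eqref{eq:adm2} --- is the standard architecture and almost certainly matches the cited source.

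Two small remarks. First, there is an arithmetic slip in the central estimate: with $\lambda_{3,n}\le n\,\alpha(n)^2$ from \eqref{eq:LambdaP} and $\varphi_n^3 = n^{-5/4}\alpha(n)^{-7/4}$ one gets $\lambda_{3,n}\varphi_n^3 \le n^{-1/4}\alpha(n)^{+1/4}$, not $n^{-1/4}\alpha(n)^{-1/4}$; this still tends to $0$ because $\alpha(n)\le n^{a_2}$ with $a_2<1$, so nothing breaks, but the exponent is wrong. Second, merely showing the cubic term is $o(1)$ in the exponent does not by itself produce the stated $\caO(\alpha(n)/n)$ relative error; you need to observe that the odd cubic term integrates to zero against the Gaussian and that the surviving contributions ($\lambda_{3,n}^2/\lambda_{2,n}^3$ and $\lambda_{4,n}/\lambda_{2,n}^2$, both $\le\alpha(n)/n$ using \eqref{eq:LambdaP} and condition~\eqref{enu:lam2Appr}) give precisely that rate. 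Your proposal gestures at ``error controlled by the third derivative'' without making this precise, which is the one place where the sketched argument would need real work before it actually delivers the claimed error exponent rather than a weaker one.
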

Note that the implicit constants in the $\caO(.)$  terms in \eqref{eq:prop:GenSaddle} can depend on $K$, $N$ and $\delta$ from the above definition of admissibility.
However, we require for our computations only the leading term in \eqref{eq:prop:GenSaddle}. Also we will not vary the values of $K$, $N$ and $\delta$.
Thus we need only the existence of $K$, $N$ and $\delta$, but not their values. We therefore can safely omit the dependence on $K$, $N$ and $\delta$. 

In view of Proposition~\ref{prop:SPmethod}, 
we see it is important to understand the asymptotic behaviour of $x_{n,\boldsymbol{q}}$ and $\lambda_{j,n}$ as $n\to\infty$.
Lemma \ref{lem:saddle_point_with_c} will be very useful for this purpose. 

\subsection{Proof or Proposition \ref{prop:LongestDiv}}
\label{sect:proof_longest_1}

We have by assumption $\mu_{\alpha(n)}\to \infty$. Thus we can apply Theorem~\ref{thm:Haupt}. 
We conclude that
\begin{align}
 \frac{C_{\alpha(n)}-\mu_{\alpha(n)}}{\sqrt{\mu_{\alpha\left(n\right)}\left(n\right)}}
 \stackrel{d}{\longrightarrow} N,
 \label{eq:CLT_for_long_cycles1}
\end{align}
where $N$ is a standard normal distributed random variable.
Since $\alpha(n)$ is the maximal cycle length, we have 
\begin{align*}
 \PTa{\left(\ell_{1},\ell_{2},\dots,\ell_{K}\right)  \neq  \big(\alpha\left(n\right),\alpha\left(n\right),\dots,\alpha\left(n\right)\big)}
 =
 \PTa{C_{\alpha\left(n\right)}<K}.
\end{align*}
Using \eqref{eq:CLT_for_long_cycles1}, we get
\begin{align*}
 \PTa{C_{\alpha\left(n\right)}<K}
 =
 \PTa{\frac{C_{\alpha(n)}-\mu_{\alpha(n)}}{\sqrt{\mu_{\alpha(n)}}}<\frac{K-\mu_{\alpha\left(n\right)}\left(n\right)}{\sqrt{\mu_{\alpha\left(n\right)}\left(n\right)}}}\xrightarrow{n\to\infty}0,
\end{align*}
and the claim follows.

\subsection{Proof of Proposition \ref{prop:LongestConv}}
\label{sect:proof_longest_2}

As a first step, we state
\begin{prop}
\label{prop:CCconWeak}
 Let $(m_k(n))_{n\in\N}$, $k=1,\ldots,K$,  be integer sequences satisfying $1 \leq m_k(n)\leq \alpha(n)$ and 
 $m_{k}(n) \neq m_l(n) $ for $k\neq l$. Suppose that
 \begin{align*}
  \mu_{m_k(n)} \to \mu_k \in[0,\infty[
 \end{align*}
 for all $k$. Then
 \begin{align}
  \left(C_{m_1(n)},\, \ldots,\, C_{m_K(n)}  \right)
  \stackrel{d}{\longrightarrow}
  \left(Y_1,\, \ldots,\, Y_K  \right)
 \end{align}
 where $(Y_k)_{k=1}^K$ a sequence of independent Poisson distributed random variables
 with parameters $\E{Y_k} = \mu_k$ for all $k=1,\ldots,K$.
\end{prop}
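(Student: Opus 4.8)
To prove Proposition~\ref{prop:CCconWeak} I would use the method of moments (more precisely, convergence of joint factorial moments), which is the standard route to Poisson convergence and is well adapted to the generating-function machinery set up in Section~\ref{sec:Generating}. Since the limit vector $(Y_1,\dots,Y_K)$ consists of independent Poisson variables, it suffices to show that for every multi-index $(r_1,\dots,r_K)\in\N_0^K$ one has
\begin{align}
\ETa{\prod_{k=1}^{K}(C_{m_k(n)})_{r_k}}
\xrightarrow{n\to\infty}
\prod_{k=1}^{K}\mu_k^{r_k},
\label{eq:fact_mom_target}
\end{align}
where $(C)_r = C(C-1)\cdots(C-r+1)$ denotes the falling factorial; the right-hand side is exactly $\prod_k \E{(Y_k)_{r_k}}$. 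Because the limit law is a genuine probability distribution determined by its moments (Poisson, hence all exponential moments finite), convergence of all joint factorial moments implies convergence in distribution.

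\textbf{Key steps.} First I would obtain an exact expression for the joint factorial moment via Lemma~\ref{lem:polya}. Choosing the weights $q_{m_k(n),n} = \vartheta\,\e{s_k}$ for $k=1,\dots,K$ and $q_{j,n}=\vartheta\,\mathbbm{1}_{\{j\le\alpha(n)\}}$ otherwise, exactly as in \eqref{eq:moment_Cm}, one gets the joint moment generating function of $(C_{m_1(n)},\dots,C_{m_K(n)})$ as a ratio of coefficient extractions; differentiating in the $s_k$ and setting $s_k=0$, or equivalently reading off the appropriate coefficient, yields
\begin{align}
\ETa{\prod_{k=1}^{K}(C_{m_k(n)})_{r_k}}
=
\frac{1}{Z_{n,\alpha}}\,\Big(\prod_{k=1}^{K}\Big(\tfrac{\vartheta}{m_k(n)}\Big)^{r_k}\Big)
\left[z^{n}\right] z^{\sum_k r_k m_k(n)}\exp\Big(\vartheta\sum_{j=1}^{\alpha(n)}\tfrac{z^j}{j}\Big).
\end{align}
Second, I would apply the saddle point result, Proposition~\ref{prop:SPmethod}, with $f_n(z) = z^{\sum_k r_k m_k(n)}$ (a polynomial, hence certainly holomorphic; conditions \eqref{enu:FdSad} and \eqref{enu:Flieb} are easily checked because on $\partial B_{x_{n,\alpha}}(0)$ the modulus of $f_n$ is constant, and $f_n'/f_n$ is a bounded multiple of $\alpha(n)/x_{n,\alpha}$, which against the scaling $n^{-5/12}\alpha^{-7/12}$ tends to $0$ since $\alpha(n)\le n^{a_2}$). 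The admissibility of the underlying array $\bsq$ with $q_{j,n}=\vartheta\mathbbm{1}_{\{j\le\alpha(n)\}}$ is exactly what Lemma~\ref{lem:saddle_point_with_c} provides. Both numerator and denominator $Z_{n,\alpha}$ then have the same $e^{\lambda_{0,n}} x_{n,\alpha}^{-n}(2\pi\lambda_{2,n})^{-1/2}$ prefactor, which cancels in the ratio, leaving
\begin{align}
\ETa{\prod_{k=1}^{K}(C_{m_k(n)})_{r_k}}
\sim
\prod_{k=1}^{K}\Big(\tfrac{\vartheta}{m_k(n)}\Big)^{r_k} x_{n,\alpha}^{\sum_k r_k m_k(n)}
=
\prod_{k=1}^{K}\Big(\vartheta\,\tfrac{x_{n,\alpha}^{m_k(n)}}{m_k(n)}\Big)^{r_k}
=
\prod_{k=1}^{K}\mu_{m_k(n)}^{r_k},
\end{align}
which converges to $\prod_k\mu_k^{r_k}$ by hypothesis. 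This establishes \eqref{eq:fact_mom_target} and hence the proposition.

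\textbf{Main obstacle.} The genuinely delicate point is verifying the hypotheses of Proposition~\ref{prop:SPmethod} uniformly, and in particular checking condition \eqref{enu:Flieb} for $f_n(z)=z^{\sum_k r_k m_k(n)}$ when the exponents $m_k(n)$ are allowed to grow as fast as $\alpha(n)\sim n^{a_2}$: one needs $n^{-5/12}\alpha(n)^{-7/12}\cdot\big(\sum_k r_k m_k(n)\big)/x_{n,\alpha} = o(1)$, which holds because $\sum_k r_k m_k(n) = \caO(\alpha(n))$ and $n^{-5/12}\alpha(n)^{5/12}\to 0$, but this is precisely the kind of estimate that must be done carefully since the implied constants depend on the fixed multi-index $(r_k)$. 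A secondary subtlety is the bookkeeping when $\mu_k=0$ for some $k$: there the corresponding $C_{m_k(n)}$ converges to the constant $0$, the factorial moments of order $r_k\ge 1$ vanish in the limit, and one must make sure the product structure of \eqref{eq:fact_mom_target} still correctly encodes independence — but since all joint factorial moments factorize, independence of the limit is automatic. Once Proposition~\ref{prop:CCconWeak} is in hand, Proposition~\ref{prop:LongestConv} follows by the usual inclusion-exclusion relating $\{\ell_k = \alpha(n)-d\}$ to the event that $C_{\alpha(n)}+\cdots+C_{\alpha(n)-d+1}<k\le C_{\alpha(n)}+\cdots+C_{\alpha(n)-d}$, applied with $m_j(n)=\alpha(n)-j+1$, together with the fact that $\mu_{\alpha(n)-d}\to\mu$ for each fixed $d$ (which is immediate from \eqref{eq:def_mu_n} and $x_{n,\alpha}\to 1$).
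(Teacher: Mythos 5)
Your proof is correct, but it takes a genuinely different route from the paper's. The paper proves (a representative case of) the proposition by computing the moment generating function $\mathbb{E}_{n,\alpha}[e^{sC_{m_1(n)}}]$ for $s\geq 0$ via Proposition~\ref{prop:SPmethod} with the perturbation $f_n(z)=\exp\bigl(\vartheta(e^s-1)z^{m_1(n)}/m_1(n)\bigr)$, and then invokes an external result to upgrade convergence of MGFs on the half-line to convergence in distribution. You instead use the method of joint factorial moments, which leads to the much simpler perturbation $f_n(z)=z^{\sum_k r_k m_k(n)}$. That monomial has constant modulus on circles centred at the origin, so conditions~\eqref{enu:FdSad} and~\eqref{enu:Flieb} of Definition~\ref{def:admF} become essentially trivial (with $K=0$ in \eqref{eq:adm2} and $|\!|\!|f_n|\!|\!|_n = \mathcal{O}((\alpha(n)/n)^{5/12})\to 0$ since $\sum_k r_k m_k(n)=\mathcal{O}(\alpha(n))$ for a fixed multi-index). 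You also need $\sum_k r_k m_k(n)\leq n$ for the coefficient extraction to be nontrivial, but this holds eventually since $\alpha(n)\leq n^{a_2}$ with $a_2<1$. The trade-off is that your route requires the joint-factorial-moment theorem for Poisson limits as an ingredient, while the paper's requires the MGF-on-a-half-line criterion; both are standard. Your derivation of the factorial-moment identity from P\'olya's formula (set $q_{m_k(n),n}=\vartheta(1+u_k)$, expand in $u_k$) is correct, the prefactor cancellation with $Z_{n,\alpha}$ goes through since both coefficient extractions use the same admissible array $\bsq$ and hence the same saddle point $x_{n,\alpha}$ and $\lambda_{j,n}$, and the treatment of the degenerate case $\mu_k=0$ is handled correctly. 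Overall your argument is, if anything, a little cleaner and more self-contained, and it handles general $K$ at no extra cost (the paper writes out only $K=1$).
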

This proposition was proven in \cite{BeScZe17}, but we give the proof of this proposition for the case $K =1$ for the convenience of the reader.
\begin{proof}
Let $K=1$. We argue here with the moment generating function.
We saw in \eqref{eq:moment_Cm} that we have for $s\geq 0$
 \begin{align}
\ETa{e^{s C_{m_1(n)}}}
= 
\frac{1}{Z_{n,\alpha} }
\left[z^n\right]\exp\left(\vartheta (e^s-1)\frac{z^{m_1(n)}}{m_1(n)}\right) \exp\left(\vartheta\sum_{j=1}^{\alpha(n)}\frac{z^j}{j}\right).
\end{align}
We now apply Proposition~\ref{prop:SPmethod} to compute the asymptotic behaviour of this expression in the case $s\geq0$.
According to \cite{Ya11}, this is sufficient to prove the proposition.
We use $\bsq =(q_{j,n})$ with $q_j =\vartheta\,\mathbbm{1}_{\left\{j\leq \alpha(n)\right\}}$
and 
$f_n(z) =\exp\left(\vartheta (e^s-1)\frac{z^{m_1(n)}}{m_1(n)}\right)$.
We thus have to show that $\bsq$ and the sequence $(f_n)_{n\in\N}$ are admissible, see Definitions~\ref{def:admQ} and~\ref{def:admF}.
Inserting the definition of $\bsq$, we immediately get that the corresponding saddle point equation is given by \eqref{eq:StaSad}, 
hence the solution is $x_{n,\alpha}$. The admissibility of $\bsq$ then follows immediately from Lemma~\ref{lem:saddle_point_with_c}.
It remains to show that $(f_n)_{n\in\N}$ is admissible.
All $f_n$ are entire functions and hence we can choose any $\delta>0$. 
Since $s\geq 0$, we have for all $r>0$ and $\varphi\in[-\pi,\pi]$
\begin{align*}
 \left|f_n(r\mathrm{e}^{\mathrm{i}\varphi}) \right| \leq |f_n(r)|.
\end{align*}
Thus the second condition is fulfilled with $K=0$. 
For the third condition, we use
\begin{align*}
 f'_n(z) = \vartheta (e^s-1) z^{m_1(n)-1} f_n(z)
\end{align*}
and that $\mu_{m_1(n)} = \vartheta \frac{x_{n,\alpha}^{m_1(n)}}{m_1(n)}$. 
Inserting this and that $\mu_{m_1(n)} \to \mu_1$ immediately shows that the third condition is fulfilled. 
So we can apply Proposition~\ref{prop:SPmethod}. 
Using that $\ETa{e^{s C_{m_1(n)}}} =1$ for $s=0$, we obtain
 \begin{align}
\ETa{e^{s C_{m_1(n)}}}
\longrightarrow
\exp\big(\vartheta (e^s-1)\mu_1\big).
\end{align}
This completes the proof.
\end{proof}
Now we turn to the proof of Proposition \ref{prop:LongestConv}.
In this proof, we write $\mu_{\alpha(n)}(n)$ instead of $\mu_{\alpha(n)}$.
Let $j\in\mathbb{N}_{0}$ be arbitrary.
Using the definition of $\mu_{m}(n)$ in \eqref{eq:def_mu_n} with $m=\alpha(n)-j$, we get
\[
\frac{\mu_{\alpha\left(n\right)}\left(n\right)}{\mu_{\alpha\left(n\right)-j}\left(n\right)}
=
\frac{\alpha\left(n\right)-j}{\alpha\left(n\right)}\,x_{n,\vartheta}^{j}\xrightarrow{n\to\infty}1.
\]
Since $\mu_{\alpha(n)}(n)\to \mu$ by assumption, we get that
\[
\mu_{\alpha\left(n\right)-j}\left(n\right)\xrightarrow{n\to\infty}\mu
\]
for all $j\in\mathbb{N}_{0}$.
Proposition~\ref{prop:CCconWeak} therefore implies that the cycle counts $\left(C_{\alpha\left(n\right)-j}\right)_{0\leq j\leq d}$ converge in
distribution to a sequence $\left(Z_{j}\right)_{j=0}^{d}$, where $(Z_{j})_{j=0}^d$ is i.i.d. Poisson distributed with parameter $\mu$.
We now have, as $n\to\infty$,
\begin{align*}
\PTa{\ell_{k}\leq\alpha\left(n\right)-d}
=
\PTa{\sum_{i=0}^{d-1}C_{\alpha\left(n\right)-i}\leq k-1}
\to 
 \mathbb{P}\left[\sum_{i=0}^{d-1}Z_{i}\leq k-1\right].
\end{align*}
By the independence of $\left(Z_{j}\right)_{0\leq i\leq d}$,
the random variable $\sum_{i=0}^{d-1}Z_{i}$ is Poisson-distributed
with parameter $d\mu$. Thus,
\[
\Pb{\sum_{j=0}^{d-1}Z_{j}\leq k-1}
=
\sum_{j=0}^{k-1}\mathrm{e}^{-d\mu}\frac{\left(d\mu\right)^{j}}{j!}
=
\frac{1}{\Gamma\left(k\right)}\int_{d\mu}^{\infty}v^{k-1}\mathrm{e}^{-v}\mathrm{d}v,
\]
where $\Gamma(s)$ denotes the gamma function. 
The last equality follows by partial integration and induction. 
We now have
\[
\PTa{\ell_{k}=\alpha\left(n\right)-d}
=
\PTa{\ell_{k}\leq \alpha\left(n\right)-d} - \PTa{\ell_{k}\leq \alpha\left(n\right)-(d+1)}.
\]
This implies
\[
\PTa{\ell_{k}=\alpha\left(n\right)-d}
\xrightarrow{n\to\infty}
\frac{1}{\Gamma\left(k\right)}\int_{d\mu}^{\left(d+1\right)\mu}v^{k-1}\mathrm{e}^{-v}\mathrm{d}v.
\]
The claim is proved.
\begin{rem}
The proof of Proposition \ref{prop:LongestConv} can also be used to compute the limit of
\[
\PTa{\left(\ell_{k}\right)_{k=1}^{K}=\left(\alpha\left(n\right)-d_{k}\right)_{k=1}^{K}}
\]
 as $n$ tends to infinity since the event in question only depends
on a finite number of cycle counts $C_{\alpha\left(n\right)-j}$.
It is, however, cumbersome to provide a
closed form for such probabilities: The reason for this is that the
stochastic process $\left(\ell_{k}\right)_{k=1}^K$ is not Markovian, i.e. the
distribution of $\ell_{K}$ depends non-trivially on the distribution
of the random vector $\left(\ell_{k}\right)_{k=1}^{K-1}$. This is why
we only provide the readily interpretable results for one individual
$\ell_{k}$ at a time in the proposition.
\end{rem}

\subsection{Proof of Theorem \ref{thm:Longest0Poissonprocess}}
\label{sect:proof_longest_3}

%
We will first prove certain auxiliary results, assuming that $\mu_{\alpha(n)}\to 0$. 
Inserting the definition of $\mu_{\alpha(n)}$, see \eqref{eq:def_mu_n}, we get
%
%
%
\[
\mu_{d_{t}\left(n\right)}\left(n\right)
=
\vartheta\frac{x_{n,\alpha}^{d_{t}\left(n\right)}}{d_{t}\left(n\right)}
=
\vartheta\frac{(x_{n,\alpha})^{\alpha\left(n\right)-\left\lfloor t/\mu_{\alpha(n)}\right\rfloor }}{\alpha\left(n\right)-\left\lfloor t/\mu_{\alpha(n)}\right\rfloor }
=
\mu_{\alpha(n)}\,\frac{\alpha\left(n\right)}{\alpha\left(n\right)-\left\lfloor t/\mu_{\alpha(n)}\right\rfloor }x_{n,\alpha}^{-\left\lfloor t/\mu_{\alpha(n)}\right\rfloor }.
\]
We now have 
\[
\frac{\alpha\left(n\right)}{\alpha\left(n\right)-\left\lfloor t/\mu_{\alpha(n)}\right\rfloor }\xrightarrow{n\to\infty}1,
\]
locally uniformly in $t$ since $1/\mu_{\alpha(n)}=o\left(\alpha\left(n\right)\right)$
by Equation \eqref{eq:LongestConv0MuN}. 
By Lemma \ref{lem:saddle_point_with_c} and Equation \eqref{eq:LongestConv0MuN}, we have as $n\to\infty$
\begin{align}
x_{n,\alpha}^{-\left\lfloor t/\mu_{\alpha(n)}\right\rfloor }
=& 
\exp\left(-\left\lfloor \frac{t}{\mu_{\alpha(n)}}\right\rfloor \frac{1}{\alpha(n)}\log\left(\frac{n}{\vartheta\alpha\left(n\right)}\log\left(\frac{n}{\vartheta\alpha(n)}\right)\right) \big(1+o(1)\big)\right)\nonumber\\
= & 
\exp\left(\mathcal{O}\left(t\frac{\alpha\left(n\right)}{n}\right)\right)
\longrightarrow 1,
\label{eq:useful_for_thight}
\end{align}
locally uniformly in $t\geq0$. Altogether, we have locally uniformly in $t$ that
\[
\mu_{d_{t}\left(n\right)}\left(n\right)
\sim
\mu_{\alpha(n)}.
\]
Furthermore, the function $m\to \mu_{m}\left(n\right)$ is increasing for 
$m\geq \frac{\alpha(n)}{\log n}$.  
This follows by computing the derivative with respect to $m$ of $\mu_{m}\left(n\right)$ in \eqref{eq:def_mu_n} and using  Lemma~\ref{lem:saddle_point_with_c}.
%
%
%
We thus have locally uniformly in $t$
\begin{equation}
\sum_{m=d_t(n)+1}^{\alpha\left(n\right)}\mu_{m}\left(n\right)
\xrightarrow{n\to\infty}
\frac{t}{\mu_{\alpha(n)}}\mu_{\alpha(n)}
=
t.
\label{eq:LongestConv0zwischen}
\end{equation}

In order to establish convergence as a stochastic process, we begin
by proving convergence of the finite-dimensional distributions. 
More precisely, for $0=t_{0}\leq t_{1}<...<t_{K}$ and $K\in\mathbb{N}$,
consider the increments $\left(P_{t_{k}}-P_{t_{k-1}}\right)_{k=1}^{K}$.
%
We now have 
\begin{align}
 P_{t_{k}}-P_{t_{k-1}} = \sum_{j=d_{t_k}\left(n\right)+1}^{d_{t_{k-1}}\left(n\right)}C_{j}.
\end{align}
We begin by determining the moment generating function. 
We have 
\begin{align}
  &\ETa{\prod_{k=1}^{K}\exp\Big(s_{k}\left(P_{t_{k}}-P_{t_{k-1}}\right)\Big)}
 \nonumber \\
=& 
\frac{1}{Z_{n,\alpha,\vartheta}}
\left[z^{n}\right]
\exp\left(\sum_{k=1}^{K}\left(\mathrm{e}^{s_{k}}-1\right)\sum_{j=d_{t_k} +1}^{d_{t_{k-1}}}
\frac{\vartheta}{j}z^{j}\right)\exp\left(\sum_{j=1}^{\alpha\left(n\right)}\frac{\vartheta}{j}z^{j}\right),
\label{eq:LongestConv0MGF}
\end{align}
where $s_{k}\geq 0$  for all $1\leq k\leq K$. 
Equation~\eqref{eq:LongestConv0MGF} follows immediately with Lemma~\ref{lem:polya} and a small computation.
%
%
We will apply Proposition~\ref{prop:SPmethod} with $\bsq =(q_{j,n})$ with $q_j =\vartheta\,\mathbbm{1}_{\left\{j\leq \alpha(n)\right\}}$
and the perturbations
\[
f_{n}\left(z\right)
=
\exp\left(\sum_{k=1}^{K}\left(\mathrm{e}^{s_{k}}-1\right)\sum_{j=d_{t_k}+1}^{d_{t_{k-1}} }\frac{\vartheta}{j}z^{j}\right).
\]
To do this, we have to check that the array $\bsq$ and the sequence $(f_n)_{n\in\N}$ are admissible,
see Definitions~\ref{def:admQ} and~\ref{def:admF}.
The array  $\bsq$ is admissible by Lemma~\ref{lem:saddle_point_with_c}.
Let us now look at $(f_n)_{n\in\N}$.
The functions $f_{n}$ are entire. 
Thus we can use any $\delta>0$. 
Further, all coefficients of the Taylor expansion of $f_{n}(z)$ at $z=0$ are non-negative since all $s_{k}\geq0$.
This implies
\begin{align*}
 \left|f_{n}\left(z\right)\right|\leq f_{n}\left(x_{n,\vartheta}\right)
 \ \text{ for all }z\in\C \text{ with }\left|z\right|=x_{n,\vartheta}.
\end{align*}
It remains to check condition \eqref{eq:adm3}.
We have
\[
f_{n}^{\prime}\left(z\right)=\sum_{k=1}^{K}\left(\mathrm{e}^{s_{k}}-1\right)\sum_{j=d_{t_{k}} +1}^{d_{t_{k-1}} }\vartheta z^{j-1}f_{n}\left(z\right).
\]
We thus have for all $z\in\C$ with $\left|z\right|=x_{n,\vartheta}$
that
\begin{align}
 \left|\frac{f_{n}^{\prime}\left(z\right)}{f_{n}\left(x_{n,\vartheta}\right)}\right|
&\leq
\sum_{k=1}^{K}\left(\mathrm{e}^{s_{k}}-1\right)\sum_{j=d_{t_{k}} +1}^{d_{t_{k-1}} }\vartheta x_{n,\vartheta}^{j-1}\nonumber\\
&\leq 
\vartheta x_{n,\vartheta}^{\alpha(n)} \sum_{k=1}^{K}\left(\mathrm{e}^{s_{k}}-1\right)\sum_{j=\alpha\left(n\right)-\left\lfloor t_{k}/\mu_{\alpha(n)}\right\rfloor +1}^{\alpha\left(n\right)-\left\lfloor t_{k-1}/\mu_{\alpha(n)}\right\rfloor } 1.
\label{eq:upper_bounds_find_good_name}
\end{align}
Using the definition of $\mu_{\alpha(n)}$ in \eqref{eq:def_mu_n}, we see that we have locally uniformly in $s_k$ 
\begin{align*}
 \left|\frac{f_{n}^{\prime}\left(z\right)}{f_{n}\left(x_{n,\vartheta}\right)}\right|
=
\mathcal{O}\left( \vartheta x_{n,\vartheta}^{\alpha(n)}   \frac{t_{K}}{\mu_{\alpha(n)}} \right)
=
\mathcal{O}\left(\alpha(n) \right).
\end{align*}
Inserting this into \eqref{eq:adm3}, we obtain
\begin{align*}
    |\!|\!|f_{n}|\!|\!|_{n}
    &=
    n^{-\frac{5}{12}}\left(\alpha\left(n\right)\right)^{-\frac{7}{12}}
    \sup_{\left|\varphi\right|\leq n^{-\frac{5}{12}}\left(\alpha\left(n\right)\right)^{-\frac{7}{12}}}\frac{\left|f_{n}^{\prime}\left(x_{n,\boldsymbol{q}}\mathrm{e}^{\mathrm{i}\varphi}\right)\right|}{\left|f_{n}\left(x_{n,\boldsymbol{q}}\right)\right|}\\
    &\leq
    n^{-\frac{5}{12}}\left(\alpha\left(n\right)\right)^{-\frac{7}{12}} \mathcal{O}\left(\alpha(n) \right)
    =
    \mathcal{O}\left(\left(\frac{\alpha(n)}{n}\right)^{5/12} \right) \to 0.
\end{align*}
This implies that the sequence $(f_n)_{n\in\N}$ is admissible, so we
%
%
%
%
%
%
can apply Proposition~\ref{prop:SPmethod} to \eqref{eq:LongestConv0MGF}.
Observe that Equation~\eqref{eq:LongestConv0zwischen} entails
\[
\sum_{j=d_{t_{k}} +1}^{d_{t_{k-1}} }\mu_{j}(n)
=
\sum_{j=\alpha\left(n\right)-\left\lfloor t_{k}/\mu_{n}\right\rfloor +1}^{\alpha\left(n\right)-\left\lfloor t_{k-1}/\mu_{n}\right\rfloor }\mu_{j}\left(n\right)\xrightarrow{n\to\infty}
t_{k}-t_{k-1}
\]
for all $k$. 
Since we use for all $s_k$ the same array $\bsq$, including the case $s_1=\ldots=s_K=0$,
we get with Proposition~\ref{prop:SPmethod} that 
\begin{align*}
 \ETa{\prod_{k=1}^{K}\exp\Big(s_{k}\left(P_{t_{k}}-P_{t_{k-1}}\right)\Big)}
 &\sim
 f_{n}\left(x_{n,\vartheta}\right)
 =
\exp\left(\sum_{k=1}^{K}\left(\mathrm{e}^{s_{k}}-1\right)\sum_{j=d_{t_k}+1}^{d_{t_{k-1}} }\mu_{j}(n)\right)\\
&\longrightarrow
\sum_{k=1}^{K}\exp\left[\left(\mathrm{e}^{s_{k}}-1\right)\left(t_{k}-t_{k-1}\right)\right].
\end{align*}
%
%
%
%
This implies that the increments $ (P_{t_{k}}-P_{t_{k-1}})_{k=1}^K$
converge in distribution to independent random variables $\left(Z_{1},Z_{2},\dots,Z_{K}\right)$,
where $Z_{k}$ is Poisson-distributed with parameter $t_{k}-t_{k-1}$.
Thus the finite-dimensional distributions of $P_{t}$
converge weakly to the finite-dimensional distributions of the Poisson
process with parameter $1$.

To prove that the process $\{P_{t}, t\geq 0\}$ converges to the Poisson process with parameter $1$,
it remains to establish the tightness of the process $\{P_{t}, t\geq 0\}$.
By \cite[Theorem~13.5 and~(13.14)]{Bi99}, it is sufficient to show for each $T>0$ that 
\begin{align}
 \mathbb{E}_{n,\alpha}\left[\left(P_{t}-P_{t_{1}}\right)^{2}\left(P_{t_{2}}-P_{t}\right)^{2}\right]
=
\mathcal{O}\left(\left(t_{2}-t_{1}\right)^{2}\right)
\label{eq:tightness_criterium}
\end{align}

uniformly in $t,t_1,t_2$ with  $0\leq t_{1}\leq t\leq t_{2}\leq T$.
Note that we can assume that $\frac{t_{2}}{\mu_{\alpha(n)}}-\frac{t_{1}}{\mu_{\alpha(n)}} \geq 1$.
Otherwise $\left(P_{t}-P_{t_{1}}\right)^{2}\left(P_{t_{2}}-P_{t}\right)^{2} =0$
and the above equation is trivially fulfilled. 
%
%
%
Let $n$ be large enough such that $d_{T}\left(n\right)>0$.
By Equation \eqref{eq:LongestConv0MGF}, we have
\begin{align*}
 & \mathbb{E}_{n,\alpha}\left[\left(P_{t}-P_{t_{1}}\right)^{2}\left(P_{t_{2}}-P_{t}\right)^{2}\right]
=  
\left.\frac{\partial^{2}}{\partial s_{2}^{2}}\frac{\partial^{2}}{\partial s_{1}^{2}}
\mathbb{E}_{n,\alpha}\left[\mathrm{e}^{s_{1}\left(P_{t}-P_{t_{1}}\right)+s_{2}\left(P_{t_{2}}-P_{t}\right)}\right]\right|_{s_1=s_2=0}\\
= & \frac{1}{Z_{n,\alpha,\vartheta}}\left.\frac{\partial^{2}}{\partial s_{2}^{2}}\frac{\partial^{2}}{\partial s_{1}^{2}}\left[z^{n}\right]\exp\left(\left(\mathrm{e}^{s_{1}}-1\right)G_{n,t_{1},t}\left(z\right)+\left(\mathrm{e}^{s_{2}}-1\right)G_{n,t,t_{2}}\left(z\right)\right)\exp\left(\sum_{j=1}^{\alpha\left(n\right)}\frac{\vartheta}{j}z^{j}\right)\right|_{s_1=s_2=0}
\end{align*}
with $ G_{n,u,w}\left(z\right):=\sum_{j= d_w(n) +1}^{d_u(n) }\frac{\vartheta}{j}z^{j}$ for $0\leq u\leq w \leq T$.
Calculating the derivatives and entering $s_1=s_2=0$ gives
\begin{align*}
  \mathbb{E}_{n,\alpha}\left[\left(P_{t}-P_{t_{1}}\right)^{2}\left(P_{t_{2}}-P_{t}\right)^{2}\right]
  =
  \frac{1}{Z_{n,\alpha,\vartheta}}\left[z^{n}\right]g_{n}\left(z\right)\exp\left(\sum_{j=1}^{\alpha\left(n\right)}\frac{\vartheta}{j}z^{j}\right)
\end{align*}
with
\begin{align*}
g_{n}\left(z\right):=G_{n,t_{1},t}\left(z\right)\left(1+G_{n,t_{1},t}\left(z\right)\right)G_{n,t,t_{2}}\left(z\right)\left(1+G_{n,t,t_{2}}\left(z\right)\right).
\end{align*}
We now apply again Proposition~\ref{prop:SPmethod}. 
We use here the perturbations $(g_n)_{n\in\N}$ and as before  $\bsq =(q_{j,n})$ with $q_j =\vartheta\,\mathbbm{1}_{\left\{j\leq \alpha(n)\right\}}$.
Thus we only have to show that $(g_n)_{n\in\N}$ is admissible.
All $g_n$ are entire and we thus can use any $\delta>0$. 
Further the coefficients of the Taylor expansion of $g_n(z)$ at $z=0$ are all non-negative. 
Thus $\left|g_{n}\left(z\right)\right|\leq g_{n}\left(\left|z\right|\right)$
for all $z$.
It remains to check condition \eqref{eq:adm3}. 
We use here an estimate which is similar to the one in \eqref{eq:upper_bounds_find_good_name}.
We have for $z\in\C$ with $|z| = x_{n,\vartheta}$ that
\begin{align*}
  |G_{n,u,w}'\left(z\right)|
  &=
  \left|\sum_{j= d_w(n) +1}^{d_u(n) }\vartheta z^{j-1}\right|
  \leq 
  \vartheta \sum_{j= d_w(n) +1}^{d_u(n) }  x_{n,\vartheta}^{j-1}
  \leq 
   \vartheta  x_{n,\vartheta}^{\alpha(n)}\sum_{j=\alpha\left(n\right)-\left\lfloor w/\mu_{\alpha(n)}\right\rfloor +1}^{\alpha\left(n\right)-\left\lfloor u/\mu_{\alpha(n)}\right\rfloor } 1\\
  &=
     \vartheta  x_{n,\vartheta}^{\alpha(n)}  \left( \left\lfloor w/\mu_{\alpha(n)}\right\rfloor  -\left\lfloor u/\mu_{\alpha(n)}\right\rfloor \right).
\end{align*}
%
Similarly, we have
\begin{align}
  |G_{n,u,w}(x_{n,\vartheta})|
  &=
  \vartheta\sum_{j= d_w(n) +1}^{d_u(n) }  \frac{x_{n,\vartheta}^j}{j}
  \geq
  \frac{\vartheta}{d_u(n)}  x_{n,\vartheta}^{d_w(n)+1} \sum_{j= d_w(n) +1}^{d_u(n) } 1\nonumber\\
  &\geq
    \frac{\vartheta}{d_u(n)}  x_{n,\vartheta}^{d_w(n)+1}
    \left( \left\lfloor w/\mu_{\alpha(n)}\right\rfloor  -\left\lfloor u/\mu_{\alpha(n)}\right\rfloor       \right).
    \label{eq:useful_for_thight2}
\end{align}
Using \eqref{eq:useful_for_thight} and the definition of $d_w(n)$ in \eqref{eq:def_dt}, we get
\begin{align*}
 \left| \frac{G_{n,u,w}'\left(z\right)}{G_{n,u,w}(x_{n,\vartheta})} \right|
 \leq 
 d_u(n) x_{n,\vartheta}^{\left\lfloor w/\mu_{\alpha(n)}\right\rfloor +1}
 \leq \alpha(n) \exp\left(\mathcal{O}\left(T\frac{\alpha\left(n\right)}{n}\right)\right)
 = 
 \mathcal{O}\left( \alpha(n)\right).
\end{align*}
This estimate is uniform in $u,w$ with $0\leq u\leq w\leq T$.
Inserting this inequality into \eqref{eq:adm3} then gives
\begin{align*}
    |\!|\!|G_{n,u,w}|\!|\!|_{n}
    &\leq
    n^{-\frac{5}{12}}\left(\alpha\left(n\right)\right)^{-\frac{7}{12}} O\left(\alpha(n) \right)
    =
    O\left(\left(\frac{\alpha(n)}{n}\right)^{5/12} \right) \to 0.
\end{align*}
We thus have 
\begin{align}
 |\!|\!|g_{n}|\!|\!|_{n} \leq  2 |\!|\!|G_{n,t_1,t}|\!|\!|_{n} + 2 |\!|\!|G_{n,t,t_2}|\!|\!|_{n} 
 =  
 O\left(\left(\frac{\alpha(n)}{n}\right)^{5/12} \right).
 \label{eq_gn_triple_norm}
\end{align}
This estimate is uniform in $t,t_1,t_2$ with $0\leq t_1\leq t \leq t_2\leq T$.
This implies that the sequence $(g_n)_{n\in\N}$ is admissible.
Proposition~\ref{prop:SPmethod} then implies that
\begin{align*}
 \mathbb{E}_{n,\alpha}\left[\left(P_{t}-P_{t_{1}}\right)^{2}\left(P_{t_{2}}-P_{t}\right)^{2}\right]
= 
g_{n}\left(x_{n,\vartheta}\right) \left(1+\mathcal{O}\left(\frac{\alpha\left(n\right)}{n}+|\!|\!|g_{n}|\!|\!|_{n}\right)\right)
\leq 
2 g_{n}\left(x_{n,\vartheta}\right).
\end{align*}
Using the definition of $g_n$ and an estimate similar to \eqref{eq:useful_for_thight2}, we get
\begin{align*}
g_{n}\left(x_{n,\vartheta}\right)
\leq & 
\left(\sum_{j=d_{t_{2}\left(n\right)}+1}^{d_{t_{1}}\left(n\right)}\frac{\vartheta}{j}x_{n,\vartheta}^{j}\right)^{2}\left(1+\sum_{j=d_{t_{2}\left(n\right)}+1}^{d_{t_{1}}\left(n\right)}\frac{\vartheta}{j}x_{n,\vartheta}^{j}\right)^{2}\\
\leq & 
2\left(d_{t_{1}}\left(n\right)-d_{t_{2}}\left(n\right)\right)^{2}\mu_{\alpha(n)}^{2}\left(1+2\left(d_{t_{1}}\left(n\right)-d_{t_{2}}\left(n\right)\right)\mu_{\alpha(n)}\right)^{2}.
\end{align*}
Using the definition of $d_t(n)$ in \eqref{eq:def_dt} and that $0\leq t_1\leq t_2\leq T$, we obtain
\begin{alignat*}{1}
g_{n}\left(x_{n,\vartheta}\right)
\leq\, & 
2(1+2T)^2\big(d_{t_{1}}\left(n\right)-d_{t_{2}}\left(n\right)\big)^{2}\mu_{\alpha(n)}^{2}\\
=\,& 
2(1+2T)^2\left(\left\lfloor \frac{t_{2}}{\mu_{\alpha(n)}}\right\rfloor -\left\lfloor \frac{t_{1}}{\mu_{\alpha(n)}}\right\rfloor \right)^{2}\mu_{\alpha(n)}^{2}\\
\leq\, & 
2(1+2T)^2\left(\frac{t_{2}}{\mu_{\alpha(n)}}-\frac{t_{1}}{\mu_{\alpha(n)}}+1\right)^{2}\mu_{\alpha(n)}^{2}\\
\leq\, & 
8(1+2T)^2\left(t_{2}-t_{1}\right)^{2}.
\end{alignat*}
Note that we used for the last equation the assumption $\frac{t_{2}}{\mu_{\alpha(n)}}-\frac{t_{1}}{\mu_{\alpha(n)}}\geq 1$.
This shows that \eqref{eq:tightness_criterium} holds.
This completes the proof.
%
%

%
%

\subsection{Proof of Theorem \ref{thm:main_dtv}}
\label{sect:proof_dtv}
The proof follows mainly the ideas in \cite{ArTa92c}, where the case of uniform permutations is treated, and is also similar to the proof of Theorem~\ref{thm:main_thm2_old} in \cite{BeScZe17}. 

In order to establish Theorem~\ref{thm:main_dtv}, we have to introduce some notation.
We set
\begin{align}
 d_{b\left(n\right)}:=
 \|\bbP_{n,\vartheta, b(n),\alpha} - \widehat \bbP_{b(n)} \|_{\rm TV}.
\end{align}
Let $\left(Y_{j}\right)$ be as in Theorem~\ref{thm:main_dtv} and  set for $b_1$, $b_2\in\N$
\begin{equation}
T_{b_{1}b_{2}}^{(n)}
:=
\sum_{j=b_{1}+1}^{b_{2}} j Y_{j}.
\label{eq:Tdef}
\end{equation}
Further, let $\boldsymbol{C}_b=\left(C_1,C_2,\dots, C_{b(n)}\right)$ the vector of the cycle counts up to length $b(n)$, 
$\boldsymbol{Y}_b =\left(Y_1, Y_2, \dots, Y_{b(n)} \right)$, and $\boldsymbol{c}=\left(c_1,c_2,\dots,c_{b(n)}\right)\in\N^{b(n)}$ a vector. 
We then have for all $\boldsymbol{c}$
\begin{align}
 \PTa{\boldsymbol{C}_{b}=\boldsymbol{c}}
=
\Pb{\left.\boldsymbol{Y}_{b}=\boldsymbol{c}\right|T_{0\alpha\left(n\right)}^{\left(n\right)}=n}.
\label{eq:cond_relation_dtv}
\end{align}
The proof of this equality is the same as for the uniform measure on $S_n$ in \cite{ArTa92c} and we thus omit it.
As in \cite[Section~4.2]{BeScZe17}, one can use \eqref{eq:cond_relation_dtv} to show that
\begin{align}
 d_{b\left(n\right)}
=
\sum_{r=0}^{\infty}\Pb{T_{0b(n)}^{\left(n\right)}=r} \left(1-\frac{\mathbb{P}\left[T_{b(n)\alpha(n)}^{\left(n\right)}=n-r\right]}{\mathbb{P}\left[T_{0\alpha\left(n\right)}^{\left(n\right)}=n\right]}\right)_{+},
\label{eq:dtv_with_Tn}
\end{align}
where $(y)_+ = \max(y,0)$.
%
%
We will split this sum into pieces. 
%
%
%
%
%
%
We have 
\[
d_{b\left(n\right)}
\leq
\mathbb{P}\left[T_{0b}^{\left(n\right)}\geq \rho \E{T_{0b(n)}^{\left(n\right)}}\right]
+
\max_{1\leq r \leq \rho \E{T_{0b(n)}^{\left(n\right)}}}
\left(1-\frac{\mathbb{P}\left[T_{b\alpha(n)}^{\left(n\right)}=n-r\right]}{\mathbb{P}\left[T_{0\alpha\left(n\right)}^{\left(n\right)}=n\right]}\right)_{+},
\]
where $\rho= \rho(n) > 1$ is arbitrary. We now have 
\begin{lem}
\label{lem:rho}
Let $\rho>1$. Then,
\[
\Pb{T_{0b}^{\left(n\right)}\geq\rho \E{T_{0b(n)}^{\left(n\right)}}}
\leq
\exp\left(\E{T_{0b(n)}^{\left(n\right)}} \frac{\rho-\rho\log(\rho)}{b(n)}\right).
\]
\end{lem}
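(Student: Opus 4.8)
The plan is to apply the exponential Markov/Chernoff bound to the nonnegative random variable $T_{0b}^{(n)} = \sum_{j=1}^{b(n)} j Y_j$, exploiting the fact that the $Y_j$ are \emph{independent} Poisson random variables, so that $T_{0b}^{(n)}$ is a sum of independent (scaled Poisson) summands and its moment generating function factorises. Concretely, for any $s > 0$ Markov's inequality gives
\[
\Pb{T_{0b}^{(n)} \geq \rho\, \E{T_{0b(n)}^{(n)}}}
\leq
\mathrm{e}^{-s\rho \E{T_{0b(n)}^{(n)}}}\, \E{\mathrm{e}^{s T_{0b}^{(n)}}}.
\]
Since $Y_j$ is Poisson with mean $\mu_j(n)$, we have $\E{\mathrm{e}^{sjY_j}} = \exp\big(\mu_j(n)(\mathrm{e}^{sj}-1)\big)$, but that introduces $\mathrm{e}^{sj}$ terms that are awkward to control uniformly in $j \leq b(n)$. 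The cleaner route is to scale $s$ by $b(n)$: write $s = \tau/b(n)$ and use $j \leq b(n)$ so that $sj \leq \tau$; this keeps the exponent bounded. Alternatively — and I think this is what is intended — one replaces each $jY_j$ by a stochastically larger object or uses the bound directly on $T_{0b}^{(n)}$ after noting that $\mathrm{e}^{sj} - 1 \leq j(\mathrm{e}^{s b(n)}-1)/b(n)$ for $0 \le j \le b(n)$ by convexity, giving $\E{\mathrm{e}^{s T_{0b}^{(n)}}} \leq \exp\!\big(\frac{\mathrm{e}^{s b(n)}-1}{b(n)} \sum_j j\mu_j(n)\big) = \exp\!\big(\frac{\mathrm{e}^{sb(n)}-1}{b(n)} \E{T_{0b(n)}^{(n)}}\big)$.

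With that bound in hand the exponent of the Chernoff estimate becomes
\[
\E{T_{0b(n)}^{(n)}}\left(\frac{\mathrm{e}^{s b(n)}-1}{b(n)} - s\rho\right).
\]
Now I would optimise over $s > 0$: setting $u = s\,b(n)$, the bracket equals $\frac{1}{b(n)}\big(\mathrm{e}^u - 1 - u\rho\big)$, which is minimised at $u = \log\rho$ (where $\mathrm{e}^u = \rho$), giving value $\frac{1}{b(n)}(\rho - 1 - \rho\log\rho) \le \frac{1}{b(n)}(\rho - \rho\log\rho)$. Substituting back yields exactly
\[
\Pb{T_{0b}^{(n)} \geq \rho\, \E{T_{0b(n)}^{(n)}}}
\leq
\exp\!\left(\E{T_{0b(n)}^{(n)}}\,\frac{\rho - \rho\log\rho}{b(n)}\right),
\]
which is the claimed inequality (the extra $-1/b(n)$ term we discarded only strengthens it, so dropping it is legitimate).

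The main obstacle — really the only subtlety — is getting the convexity step right so that the factor of $1/b(n)$ appears: one must use $j \leq b(n)$ to linearise $\mathrm{e}^{sj}-1$ in $j$ on the interval $[0, b(n)]$, since $t \mapsto \mathrm{e}^{st}-1$ is convex and vanishes at $t=0$, hence $\mathrm{e}^{sj}-1 \leq \frac{j}{b(n)}(\mathrm{e}^{sb(n)}-1)$ for $0 \le j \le b(n)$. Everything else is the standard Cramér–Chernoff optimisation for a Poisson-type tail, and the appearance of the characteristic $\rho - \rho\log\rho$ rate function is exactly what one expects. One should also note $\E{T_{0b(n)}^{(n)}} = \sum_{j=1}^{b(n)} j\mu_j(n)$ is finite, so all moment generating functions above are finite for every $s>0$ and the manipulations are justified.
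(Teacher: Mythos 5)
Your proof is correct and takes essentially the same route as the paper: a Chernoff/Markov bound on $T_{0b}^{(n)}$, the linearisation $\mathrm{e}^{js}-1\le \frac{j}{b(n)}(\mathrm{e}^{sb(n)}-1)$ (which the paper gets by writing $\mathrm{e}^{js}-1=j\int_0^s \mathrm{e}^{jx}\,\mathrm{d}x$ and bounding $\mathrm{e}^{jx}\le \mathrm{e}^{bx}$, while you invoke convexity of $t\mapsto \mathrm{e}^{st}-1$ --- the same inequality), and the optimal choice $s=\frac{1}{b(n)}\log\rho$. The only cosmetic difference is that the paper discards a harmless term earlier, bounding $\frac{\mathrm{e}^{bs}-1}{b}\le\frac{\mathrm{e}^{bs}}{b}$ before optimising, whereas you keep the $-1$ and drop the resulting $-\frac{1}{b(n)}$ at the end; both give the stated inequality.
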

\begin{proof}
We set $m:=\E{T_{0b(n)}^{\left(n\right)}}$.
We then have for all $s\geq0$
\begin{align}
 \Pb{T_{0b(n)}^{\left(n\right)}\geq\rho m }
 =
 \Pb{\mathrm{e}^{sT_{0b(n)}^{\left(n\right)}}\geq \mathrm{e}^{s\rho m}} 
 \leq 
 \frac{\E{ \mathrm{e}^{sT_{0b}^{\left(n\right)}}}}{\mathrm{e}^{s\rho m}}.
 \label{eq:markov_T0b}
\end{align}
The independence of the $Y_j$ and $m=\sum_{j=1}^{b(n)} j \mu_j(n) = \vartheta\sum_{j=1}^b x_{n,\vartheta}^j$ imply that 
\begin{align}
 \log\left( \E{ \mathrm{e}^{sT_{0b}^{\left(n\right)}}} \right)
 &=
 \sum_{j=1}^b{\mu_j(n)}(\mathrm{e}^{js}-1)
 =
 \vartheta\sum_{j=1}^b x_{n,\vartheta}^j\int_0^{s} e^{jx}dx
 \leq
 \vartheta\sum_{j=1}^b x_{n,\vartheta}^j\int_0^{s} e^{bx}dx\nonumber\\
 &\leq 
 m\int_0^{s} e^{bx}dx
 =
 m\frac{e^{bs}-1}{b}
 \leq
 \frac{me^{bs}}{b}.
 \label{eq:markov_T0b:in}
\end{align}
We thus have $\Pb{T_{0b}^{\left(n\right)}\geq\rho m}
\leq \exp\left( \frac{me^{bs}}{b} - s\rho m \right)$.
%
We now use $s=\frac{1}{b}\log\left(\rho\right)$, which is by assumption non-negative.
Inserting this into the above inequality completes the proof.
\end{proof}
In order to choose a suitable $\rho$, we have to determine the asymptotic behavior of $\E{T_{0b(n)}^{(n)}}$.
Using the definition of $\mu_j(n)$ in \eqref{eq:def_mu_n}, we get
\begin{align}
 \E{T_{0b(n)}^{(n)}}
 =
 \sum_{j=1}^{b(n)} j \mu_j(n)
 =
 \vartheta \sum_{j=1}^{b(n)} (x_{n,\alpha})^j
 =
 \vartheta x_{n,\alpha} \frac{(x_{n,\alpha})^{b(n)}-1 }{x_{n,\alpha} -1}.
\end{align}
We know from Lemma~\ref{lem:saddle_point_with_c} that $x_{n,\alpha} \to 1$ and
\begin{align}
 (x_{n,\alpha})^{b(n)} 
 \sim
 \left(\frac{n}{\vartheta \alpha(n)} \log\left(\frac{n}{\vartheta \alpha(n)}\right) \right)^{b(n)/\alpha(n)}.
\end{align}
If $b(n)= o\left(\alpha(n)/\log(n)\right)$ then $(x_{n,\alpha})^{b(n)} \to 1$ and thus $\E{T_{0b(n)}^{(n)}}\sim b(n)$.
However, we can also have $b(n)\geq  c\frac{\alpha(n)}{\log(n)}$ for some $c>0$.
Using that $x_{n,\alpha} -1 \sim \log(x_{n,\alpha})$, we immediately obtain
\begin{align}
 \E{T_{0b(n)}^{(n)}} 
 &\approx
 \frac{\alpha(n)}{\log(n)}  \left(\frac{n}{\vartheta \alpha(n)} \log\left(\frac{n}{\vartheta \alpha(n)}\right) \right)^{b(n)/\alpha(n)}.
 \label{eq:E(Tb)}
\end{align}
This implies that we have for $n$ large
\begin{align}
\frac{\alpha(n)}{\log(n)}
 \leq 
 \E{T_{0b(n)}^{(n)}} 
 \leq 
 \frac{\alpha(n)n^\epsilon}{\log(n)},
  \label{eq:E(Tb)01}
\end{align}
where $\epsilon>0$ can be chosen arbitrarily.
In view of \eqref{eq:E(Tb)01} and $b(n)=o(\alpha(n))$, we use $\rho =\log^2(n)$ in Lemma~\ref{lem:rho}. 
With this choice of $\rho$, we immediately get that $\Pb{T_{0b}^{\left(n\right)}\geq\rho \E{T_{0b(n)}^{\left(n\right)}}} = \caO(n^{-A})$ where $A>0$ is arbitrary.
%
%
Inserting this into \eqref{eq:dtv_with_Tn}, with $A=2$, we get 
\begin{align}
 d_{b\left(n\right)}
\leq
\max_{r\leq \rho \E{T_{0b(n)}^{\left(n\right)}}} \left(1-\frac{\mathbb{P}\left[T_{b(n)\alpha(n)}^{\left(n\right)}=n-r\right]}{\mathbb{P}\left[T_{0\alpha\left(n\right)}^{\left(n\right)}=n\right]}\right)_{+}
+
\caO(n^{-2}).
\label{eq:dtv_with_Tn2}
\end{align}
We look next at $T_{b(n)\alpha(n)}^{\left(n\right)}$.
Using that that all $Y_j$ are independent, we get that the probability generating function of $T_{b\alpha(n)}^{\left(n\right)}$ is
\begin{align}
 \E{z^{T_{b\alpha(n)}^{\left(n\right)}}}
 =
 \exp\left(\sum_{j=b+1}^{\alpha(n)} \mu_j(n) (z^j-1)\right).
\end{align}
Using that $\mu_j(n) = \vartheta \frac{\left(x_{n,\alpha}\right)^{j}}{j}$, we get
\begin{align*}
 \Pb{T_{b(n)\alpha(n)}^{\left(n\right)}=n-r}
 &=
  \exp\left(-\sum_{j=b+1}^{\alpha(n)}\mu_j(n) \right) x_{n,\alpha}^{n-r}
 \left[z^{n}\right]z^r\exp\left(\vartheta\sum_{j=b+1}^{\alpha(n)}\frac{1}{j} z^{j}\right).
\end{align*}
Similarly, we obtain
\begin{align*}
\Pb{T_{0\alpha\left(n\right)}^{\left(n\right)}=n}
= 
\exp\left(-\sum_{j=1}^{\alpha(n)}\mu_j(n) \right)
x_{n,\alpha}^{n}
\left[z^{n}\right]\exp\left(\vartheta\sum_{j=1}^{b}\frac{1}{j}z^{j}\right)\exp\left(\vartheta   \sum_{j=b+1}^{\alpha(n)}\frac{1}{j}z^{j}\right).
\end{align*}
Thus we have to determine for $r\leq \rho \E{T_{0b(n)}^{\left(n\right)}}$ the asymptotic behaviour of
\begin{align*}
  \left[z^{n}\right]z^r\exp\left(\vartheta\sum_{j=b+1}^{\alpha(n)}\frac{1}{j} z^{j}\right)
  \ \text{ and } \ 
  \left[z^{n}\right]\exp\left(\vartheta\sum_{j=1}^{b}\frac{1}{j}z^{j}\right)\exp\left(\vartheta\sum_{j=b+1}^{\alpha(n)}\frac{1}{j}z^{j}\right).
\end{align*}
We do this with Proposition~\ref{prop:SPmethod}. 
We use for both the triangular array 
\begin{align}
\bsq = (q_{j,n})_{1 \leq j \leq \alpha(n), n \in \N} \ \text{ with } \ 
 q_{j,n} =\vartheta\,\mathbbm{1}_{\left\{b(n)+1\leq j\leq \alpha(n)\right\}}.
 \label{eq:q_for_dtv}
\end{align}
Furthermore, we use the perturbations $f_{1,n}(z)=z^{r}$ for the first and $f_{2,n}(z)=\exp\left(\vartheta\sum_{j=1}^{b}\frac{1}{j}z^{j}\right)$ for the second expression.
We thus have to show that $\bsq$ and $f_{1,n}(z)$ and $f_{2,n}(z)$ are admissible, see Definitions~\ref{def:admQ} and~\ref{def:admF}.
We now have
\begin{lem}
\label{lem:NeuerSattel}
Let $b=o(\alpha(n))$ and define $x_n$ to be the solution of the equation
\begin{align}
 n=\vartheta\sum_{j=b+1}^{\alpha}x_{n}^{j}.
 \label{eq:def_xn}
\end{align}
We then have $x_{n,\alpha} \leq x_{n} \leq x_{n,\alpha-b}$ and $|x_{n}-x_{n,\alpha}|=\mathcal{O}\left(\frac{1}{\alpha(n)}\right)$. 
Furthermore the triangular array $\bsq$ in \eqref{eq:q_for_dtv} is admissible. 
\end{lem}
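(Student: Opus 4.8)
The plan is to prove the three assertions in turn. First observe that the saddle point $x_{n,\bsq}$ attached to the array $\bsq$ in \eqref{eq:q_for_dtv} solves $n=\sum_{j=1}^{\alpha(n)}q_{j,n}x^j=\vartheta\sum_{j=b(n)+1}^{\alpha(n)}x^j$, i.e.\ \eqref{eq:def_xn}; hence $x_{n,\bsq}=x_n$, and admissibility of $\bsq$ is a statement about $x_n$. For the sandwiching, note that each of the maps $x\mapsto\vartheta\sum_{j=1}^{\alpha}x^j$, $x\mapsto\vartheta\sum_{j=b+1}^{\alpha}x^j$ and $x\mapsto\vartheta\sum_{j=1}^{\alpha-b}x^j$ is continuous and strictly increasing on $(0,\infty)$, taking the value $n$ at $x_{n,\alpha}$, $x_n$ and $x_{n,\alpha-b}$ respectively (for $n$ large, $b(n)<\alpha(n)$). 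Since $\vartheta\sum_{j=b+1}^{\alpha}x_{n,\alpha}^j\le\vartheta\sum_{j=1}^{\alpha}x_{n,\alpha}^j=n$ by \eqref{eq:StaSad}, monotonicity forces $x_n\ge x_{n,\alpha}$, and by Lemma~\ref{lem:saddle_point_with_c} this gives $x_n\ge x_{n,\alpha}\ge1$. Then $\vartheta\sum_{j=b+1}^{\alpha}x_n^j=x_n^{b}\,\vartheta\sum_{j=1}^{\alpha-b}x_n^j\ge\vartheta\sum_{j=1}^{\alpha-b}x_n^j$, so $\vartheta\sum_{j=1}^{\alpha-b}x_n^j\le n$, whence $x_n\le x_{n,\alpha-b}$.

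For the quantitative bound, subtract \eqref{eq:def_xn} from \eqref{eq:StaSad} to get $\vartheta\sum_{j=b+1}^{\alpha}\big(x_n^j-x_{n,\alpha}^j\big)=\vartheta\sum_{j=1}^{b}x_{n,\alpha}^j\le n$. Since $x_n\ge x_{n,\alpha}\ge1$, the mean value theorem gives $x_n^j-x_{n,\alpha}^j\ge(x_n-x_{n,\alpha})\,j\,x_{n,\alpha}^{j-1}$, so
\[
x_n-x_{n,\alpha}\ \le\ \frac{n}{\vartheta\sum_{j=b+1}^{\alpha}j\,x_{n,\alpha}^{j-1}}\ =\ \frac{n\,x_{n,\alpha}}{\vartheta\sum_{j=1}^{\alpha}j\,x_{n,\alpha}^j-\vartheta\sum_{j=1}^{b}j\,x_{n,\alpha}^j}.
\]
By \eqref{eq:lambda2alt} the first sum in the denominator satisfies $\vartheta\sum_{j=1}^{\alpha}j\,x_{n,\alpha}^j\sim n\alpha(n)$, while $\vartheta\sum_{j=1}^{b}j\,x_{n,\alpha}^j\le b(n)\,\vartheta\sum_{j=1}^{b}x_{n,\alpha}^j\le b(n)\,n=o(n\alpha(n))$ because $b(n)=o(\alpha(n))$. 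Together with $x_{n,\alpha}\to1$ this shows the displayed denominator is $\sim n\alpha(n)$ and the numerator is $\sim n$, hence $x_n-x_{n,\alpha}=\caO(1/\alpha(n))$.

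It remains to check the three conditions of Definition~\ref{def:admQ} for $\bsq$. For \eqref{enu:sadAppr}: since $x_{n,\alpha}\ge1$ we have $\alpha(n)\log x_n=\alpha(n)\log x_{n,\alpha}+\alpha(n)\log\!\big(1+\caO(1/\alpha(n))\big)=\alpha(n)\log x_{n,\alpha}+\caO(1)$, and $\alpha(n)\log x_{n,\alpha}\approx\log(n/\alpha(n))\to\infty$ by Lemma~\ref{lem:saddle_point_with_c}, so $\alpha(n)\log x_n\approx\log(n/\alpha(n))$. For \eqref{enu:lam2Appr}: $\lambda_{2,n}=\vartheta\sum_{j=b+1}^{\alpha}j\,x_n^j\le n\alpha(n)$ by \eqref{eq:LambdaP}, while $\lambda_{2,n}\ge\vartheta\sum_{j=b+1}^{\alpha}j\,x_{n,\alpha}^j\sim n\alpha(n)$ by the computation above; hence $\lambda_{2,n}\approx n\alpha(n)$. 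For \eqref{enu:qNichtzuklein}: take the sequence $b(n)+1$, the constant $c=\vartheta$ and $\delta=1/2$; then $q_{j,n}=\vartheta$ for all $b(n)+1\le j\le\alpha(n)$, and $(b(n)+1)/\alpha(n)\to0<1-\delta$ for $n$ large since $b(n)=o(\alpha(n))$. This gives admissibility of $\bsq$ and completes the proof.

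I expect the delicate point to be the estimate $|x_n-x_{n,\alpha}|=\caO(1/\alpha(n))$. The naive route via $|x_n-x_{n,\alpha}|\le x_{n,\alpha-b}-x_{n,\alpha}$ is too lossy under the sole hypothesis $b(n)=o(\alpha(n))$ — that difference is of order $\tfrac{b(n)\log n}{\alpha(n)^2}$, which need not be $\caO(1/\alpha(n))$ — so instead one must play the small "missing mass" $\vartheta\sum_{j=1}^{b}x_{n,\alpha}^j\le n$ against the much larger derivative sum $\vartheta\sum_{j=b+1}^{\alpha}j\,x_{n,\alpha}^{j-1}\approx n\alpha(n)$; the asymptotic \eqref{eq:lambda2alt} from Lemma~\ref{lem:saddle_point_with_c} is precisely what makes this comparison work, and it is the only nontrivial input needed.
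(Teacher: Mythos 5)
Your proof is correct, and the sandwich $x_{n,\alpha}\le x_n\le x_{n,\alpha-b}$ together with the verification of the three admissibility conditions matches the paper in substance. The interesting divergence is in the middle step: the paper itself proves $|x_n-x_{n,\alpha}|=\caO(1/\alpha(n))$ precisely by the ``naive route'' you flag as lossy, writing
\[
x_n-x_{n,\alpha}\;\le\;x_{n,\alpha-b}-x_{n,\alpha}\;=\;\caO\!\left(\frac{1}{\alpha(n)}\right),
\]
citing only Lemma~\ref{lem:saddle_point_with_c}. You are right to be suspicious of this. Applying \eqref{eq:StaSadAs} to both $\alpha$ and $\alpha-b$ and subtracting gives $\log x_{n,\alpha-b}-\log x_{n,\alpha}\approx b\log n/\alpha^2 + \caO\big(\log\log n/(\alpha\log n)\big)$, so $x_{n,\alpha-b}-x_{n,\alpha}\approx b\log n/\alpha^2$, which for $b$ of order, say, $\alpha/\sqrt{\log n}$ is \emph{not} $\caO(1/\alpha)$. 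Your mean-value-theorem argument, which plays the missing mass $\vartheta\sum_{j=1}^{b}x_{n,\alpha}^j\le n$ against the derivative sum $\vartheta\sum_{j=b+1}^{\alpha}j\,x_{n,\alpha}^{j-1}\sim n\alpha(n)$ via \eqref{eq:lambda2alt}, circumvents this and yields the clean $\caO(1/\alpha(n))$ bound under the stated hypothesis $b=o(\alpha(n))$. So your route is not merely an alternative: it actually closes a gap in the paper's own one-line justification of that estimate. The remaining pieces (the monotonicity sandwich itself, the verification of conditions (1)--(3) in Definition~\ref{def:admQ} via Lemma~\ref{lem:saddle_point_with_c}) proceed as in the paper, modulo the paper's use of the closed-form identity for $\sum_{j\le d}jq^j$ for condition (2) where you instead sandwich $\lambda_{2,n}$ between $\vartheta\sum_{j>b}jx_{n,\alpha}^j$ and the universal bound \eqref{eq:LambdaP}; both are fine.
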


\begin{proof}
We have by definition that $x_{n,\alpha} \leq x_{n}$. 
Further, $x_{n,\alpha-b}$ is the solution of 
\[
n=\sum_{j=1}^{\alpha(n)-b} (x_{n,\alpha-b})^{j}.
\]
Since $\alpha(n)<n$, we have $x_{n}\geq 1$ and $x_{n,\alpha-b}\geq 1$.
This implies that $x_{n}\leq x_{n,\alpha-b}$.
Lemma~\ref{lem:saddle_point_with_c} now implies
\[
|x_{n}-x_{n,\alpha}| 
=
x_{n}-x_{n,\alpha}
\leq 
x_{n,\alpha-b}-x_{n,\alpha}
=
\mathcal{O}\left(\frac{1}{\alpha(n)}\right).
\]
Further, $x_{n,\alpha}$ and $x_{n,\alpha-b}$ are admissible by Lemma~\ref{lem:saddle_point_with_c}.
Thus $x_{n,\alpha} \leq x_{n} \leq x_{n,\alpha-b}$ together with Equation~\eqref{eq:StaSadAs} 
immediately shows that $ x_{n}$ fulfills Condition~(1) in Definition~\ref{def:admQ}.
Furthermore, we also get 
\begin{align}
 \log(x_n) \approx \frac{\log(n)}{\alpha(n)} 
 \ \text{ and } \ 
 x_n -1 \approx  \frac{\log(n)}{\alpha(n)}.
 \label{eq:approx_xn}
\end{align}
To see that $ x_{n}$ fulfills Condition~(2), one uses \eqref{eq:approx_xn} and the identity
\begin{align}
 \sum_{j=0}^d jq^j =  \frac{d q^{d+1}}{q-1} -\frac{q(q^d-1)}{(q-1)^2} 
 \text{ for all }d\in\N, q\neq 0.
\end{align}
Condition~(3) is obvious. 
Thus $\bsq$ is admissible.
\end{proof}
We now can show 
\begin{lem}
\label{lem:NeuerSattel2}
The sequences $(f_{1,n})_{n\in\N}$ with $f_{1,n}=z^r$ is admissible for all $ r=o\left(n^{\frac{5}{12}}\alpha^{\frac{7}{12}}\right)$.
Further, $(f_{2,n})_{n\in\N}$ with $f_{2,n}=\exp\left(\vartheta\sum_{j=1}^{b}\frac{1}{j}z^{j}\right)$ is admissible.
\end{lem}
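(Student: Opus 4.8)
The plan is to verify, for each of the two function sequences separately, the three conditions of Definition~\ref{def:admF} with respect to the admissible array $\bsq$ from \eqref{eq:q_for_dtv}, whose saddle point $x_{n,\bsq}=x_n$ was analysed in Lemma~\ref{lem:NeuerSattel}. The first condition (holomorphy) is immediate in both cases, since $f_{1,n}(z)=z^r$ is a polynomial and $f_{2,n}(z)=\exp\bigl(\vartheta\sum_{j=1}^{b}z^j/j\bigr)$ is entire, so any $\delta>0$ works.

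For $f_{1,n}$ the remaining two conditions are elementary. On the circle $|z|=x_n$ one has $|f_{1,n}(z)|=x_n^{r}=|f_{1,n}(x_n)|$, so the second condition holds with $K=1$. Since $f_{1,n}'(z)=r z^{r-1}$ and $x_n\geq 1$ by Lemma~\ref{lem:NeuerSattel}, we get $|f_{1,n}'(x_n\mathrm{e}^{\ii\varphi})|/|f_{1,n}(x_n)| = r/x_n\leq r$ for every $\varphi$, whence
\[
|\!|\!|f_{1,n}|\!|\!|_n \leq r\, n^{-\frac{5}{12}}\alpha^{-\frac{7}{12}}\longrightarrow 0
\]
precisely because $r=o\bigl(n^{\frac{5}{12}}\alpha^{\frac{7}{12}}\bigr)$; this gives the third condition.

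For $f_{2,n}$ all Taylor coefficients at $0$ are non-negative (here $\vartheta>0$ enters), so $|f_{2,n}(z)|\leq f_{2,n}(x_n)$ whenever $|z|=x_n$; this yields the second condition with $K=1$ and at the same time reduces the third condition to an estimate of $\sum_{j=1}^{b}x_n^{j-1}$, because $f_{2,n}'(z)=\vartheta\bigl(\sum_{j=1}^{b}z^{j-1}\bigr)f_{2,n}(z)$ gives $|f_{2,n}'(z)|/|f_{2,n}(x_n)|\leq\vartheta\sum_{j=1}^{b}x_n^{j-1}$ on $|z|=x_n$. The key input is the bound $|x_n-x_{n,\alpha}|=\caO(1/\alpha(n))$ of Lemma~\ref{lem:NeuerSattel}: since $b=o(\alpha(n))$ it forces $(x_n/x_{n,\alpha})^{j}\leq\mathrm{e}^{\caO(b/\alpha)}=\caO(1)$ for all $j\leq b$, so that $\sum_{j=1}^{b}x_n^{j-1}\leq\sum_{j=1}^{b}x_n^{j}=\caO\bigl(\sum_{j=1}^{b}x_{n,\alpha}^{j}\bigr)=\caO\bigl(\E{T_{0b(n)}^{(n)}}\bigr)$. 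Inserting the upper bound $\E{T_{0b(n)}^{(n)}}\leq\alpha(n)n^{\epsilon}/\log n$ from \eqref{eq:E(Tb)01} then gives
\[
|\!|\!|f_{2,n}|\!|\!|_n = \caO\Bigl(n^{\epsilon-\frac{5}{12}}\alpha(n)^{\frac{5}{12}}/\log n\Bigr) = \caO\bigl(n^{\epsilon-\frac{5}{12}(1-a_2)}\bigr),
\]
which tends to $0$ once $\epsilon$ is chosen small enough, since $\alpha(n)\leq n^{a_2}$ with $a_2<1$ by \eqref{eq:condition on alpha}. Hence the third condition holds for $f_{2,n}$ as well, and both sequences are admissible.

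All of these estimates are routine; the only step requiring genuine care is the bound on $\sum_{j=1}^{b}x_n^{j-1}$ for $f_{2,n}$, i.e.\ controlling the perturbation produced by the short cycles. This is exactly where one must combine Lemma~\ref{lem:NeuerSattel} (to replace $x_n$ by $x_{n,\alpha}$ at negligible cost), the already established growth bound \eqref{eq:E(Tb)01} for $\E{T_{0b(n)}^{(n)}}$, and the standing hypotheses $b=o(\alpha(n))$ and $\alpha(n)\leq n^{a_2}$ with $a_2<1$.
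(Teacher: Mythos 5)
Your proof is correct and follows essentially the same structure as the paper's: verify the three conditions of Definition~\ref{def:admF}, with conditions (1) and (2) trivial (entire functions, nonnegative Taylor coefficients) and condition (3) reducing to bounding $r/x_n$ for $f_{1,n}$ and $\vartheta\sum_{j=1}^{b}x_n^{j-1}$ for $f_{2,n}$. For $f_{1,n}$ the argument is identical. For $f_{2,n}$ there is a mild cosmetic difference: the paper bounds $\sum_{j=0}^{b-1}x_n^j=\frac{x_n^b-1}{x_n-1}\leq\frac{x_{n,\alpha-b}^b-1}{x_{n,\alpha}-1}$ directly via the sandwich $x_{n,\alpha}\leq x_n\leq x_{n,\alpha-b}$ from Lemma~\ref{lem:NeuerSattel} and then applies Lemma~\ref{lem:saddle_point_with_c}, whereas you use the other conclusion of Lemma~\ref{lem:NeuerSattel}, namely $|x_n-x_{n,\alpha}|=\caO(1/\alpha(n))$, to replace $x_n$ by $x_{n,\alpha}$ at $\caO(1)$ cost and then recycle the already-established bound \eqref{eq:E(Tb)01} on $\E{T_{0b(n)}^{(n)}}$. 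Both routes rest on the same saddle-point asymptotics and give the same estimate $|\!|\!|f_{2,n}|\!|\!|_n=\caO(n^{-5/12+\epsilon}\alpha(n)^{5/12})$; your variant simply avoids recomputing $x_{n,\alpha-b}^b$, which is a small economy but not a structurally different argument.
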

\begin{proof}
We start with $(f_{1,n})_{n\in\N}$. 
Since all $f_{1,n} =z^r$, the first two conditions of Definition~\ref{def:admF} are fulfilled with $\delta=N=1$ and $K=0$ for all $r$.
 We now have
\begin{align*}
 |\!|\!|f_{1,n}|\!|\!|_{n}
 \leq 
n^{-\frac{5}{12}}\left(\alpha\left(n\right)\right)^{-\frac{7}{12}} rx_n^{-1}.
\end{align*}
Since $x_n\to 1$, we have $|\!|\!|f_{1,n}|\!|\!|_{n}\to 0$ if and only if $r= o(n^{\frac{5}{12}}\left(\alpha\left(n\right)\right)^{\frac{7}{12}})$.
This completes the proof of the first half of the statement.
For $(f_{2,n})_{n\in\N}$, we also have only to check the third condition.
Lemma~\ref{lem:saddle_point_with_c} implies that $x_n-1 \geq c\log(n)/\alpha(n)$ for some $c>0$. 
Since $x_{n,\alpha} \leq x_n\leq x_{n,\alpha-b}$ and  $b=o(\alpha(n))$, we get with Lemma~\ref{lem:saddle_point_with_c}
\begin{align*}
 \frac{|f_{2,n}^{\prime}(z)|}{|f_{2,n}(x_n)|}
 \leq
 \sum_{j=0}^{b-1} x_{n}^{j}
 =
 \frac{ x_{n,\alpha-b}^{b}-1}{x_{n,\alpha}-1}
 = 
 \caO(n^\epsilon \alpha(n))
 \ \text{ for all $z$ with }|z|=x_n,
\end{align*}
where $\epsilon>0$ can be chossen arbitarily small. 
We thus have $ |\!|\!|f_{2,n}|\!|\!|_{n} \leq n^{-\frac{5}{12}+\epsilon}(\alpha(n))^{\frac{5}{12}}$.
Since $\alpha(n)\leq n^{a_2}$ with $a_2<1$, we see that $|\!|\!|f_{2,n}|\!|\!|_{n}\to 0$ for $\epsilon>0$ small enough.
\end{proof}
We know from \eqref{eq:E(Tb)01} that $\E{T_{0b(n)}^{(n)}} \leq \frac{\alpha(n)n^\epsilon}{\log(n)}$ for each $\epsilon>0$ and $n$ large enough.
This shows that we can use Proposition~\ref{prop:SPmethod} to compute $\Pb{T_{b(n)\alpha(n)}^{\left(n\right)}=n-r}$
and $\Pb{T_{0\alpha(n)}^{\left(n\right)}=n}$ for $r\leq \rho \E{T_{0b(n)}^{\left(n\right)}}$.
We thus have 
\begin{align}
\frac{\Pb{T_{b(n)\alpha(n)}^{\left(n\right)}=n-r}}{\Pb{T_{0\alpha(n)}^{\left(n\right)}=n}}
=
x_{n,\alpha}^{-r}x_{n}^{r} \exp\left(-\vartheta\sum_{j=1}^{b}\frac{1}{j}\left(x_{n}^{j}-x_{n,\alpha}^{j}\right)\right)
\left(1 + R_n\right),
\label{eq:dtv_estimate_allmost_finished0}
\end{align}
where
\begin{align*}
	R_n 
	&= 
	 \caO\left(\frac{\alpha\left(n\right)}{n} +|\!|\!|f_{1,n}|\!|\!|_{n}+ |\!|\!|f_{2,n}|\!|\!|_{n} \right).
\end{align*}
Note  that the implicit constant in the error term in Proposition~\ref{prop:SPmethod} only depends on the used
$K$, $N$ and $\delta$. 
Since we use for each $r\leq \rho \E{T_{0b(n)}^{\left(n\right)}}$ the same $K$, $N$ and $\delta$, we get that $R_n$ is uniform in $r$.
We now have to distinguish the two cases  $b(n) =o(\alpha(n))$ and $b(n) =o\left(\frac{\alpha(n)}{\log(n)}\right)$
for the error terms in \eqref{eq:thm:main_dtv1} and \eqref{eq:thm:main_dtv2}. 
In the case $b(n) =o(\alpha(n))$, we get with \eqref{eq:E(Tb)01} and the proof of Lemma~\ref{lem:NeuerSattel2} that
$$
|\!|\!|f_{1,n}|\!|\!|_{n}
\leq 
\frac{r }{n^{\frac{5}{12}}(\alpha(n))^{\frac{7}{12}}} 
\leq 
\frac{\rho \E{T_{0b(n)}^{\left(n\right)}} }{n^{\frac{5}{12}}(\alpha(n))^{\frac{7}{12}}} 
=
\caO\left(n^{\epsilon} \left(\frac{\alpha(n)}{n}\right)^{\frac{5}{12}}\right)
$$
for each $ \epsilon>0$.  We thus have that $R_n$ is as in \eqref{eq:thm:main_dtv1}. 
In the case $b(n) =o\left(\frac{\alpha(n)}{\log(n)}\right)$, we have $\E{T_{0b(n)}^{\left(n\right)}} \sim b(n)$.
Using this, we immediately get that $R_n$ is as in \eqref{eq:thm:main_dtv2}. 

It thus remains to compute the asymptotic behaviour of the main term in \eqref{eq:dtv_estimate_allmost_finished0}.
We thus need an estimate for $x_n^b - x_{n,\alpha}^b$.
Unfortunately, the bounds obtained from the Lemmas~\ref{lem:NeuerSattel} and~\ref{lem:saddle_point_with_c} are not strong enough.
%
%
To overcome this issue, let us consider for $y\in\R$ the equation
\begin{align}
\vartheta e^{\alpha(n) y} =ny.
\label{eq:def_y_n_alpha}
\end{align}
It is straightforward to see that this equation has for $\frac{n}{\vartheta\alpha(n)}>e$ two solutions.
We denote these by $y_{n,\alpha,0}$ and $y_{n,\alpha}$ with $0<y_{n,\alpha,0}<y_{n,\alpha}$.
It is straightforward to see that $y_{n,\alpha,0}\sim\frac{\vartheta}{n}$ and $y_{n,\alpha}\sim\frac{\log (n/\alpha(n))}{\alpha(n)}$ as $n\to\infty$.
We have
\begin{lem}
\label{lem:NeuerSattel3}
We have 
\begin{align}
 \alpha(n)\,y_{n,\alpha} 
 =
 \log\left(\frac{n}{\vartheta\alpha(n)}\log\left(\frac{n}{\vartheta\alpha\left(n\right)}\right)\right) 
 + 
 O\left( \frac{\log\log(n)}{\log(n)}\right).
 \label{eq_y_n_asympt}
\end{align}
Furthermore, we have for $b=o(\alpha(n))$ that
\begin{align}
 \log(x_{n,\alpha}) = y_{n,\alpha} + \caO\left(\frac{1}{n\log(n)}\right)
 \ \text{ and } \
 \log(x_{n}) = y_{n,\alpha} + \caO\left(\frac{e^{by_{n,\alpha}}}{n\log(n)}\right).
 \label{eq:x_n_with_y_n_alpha}
\end{align}
\end{lem}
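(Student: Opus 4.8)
The plan is to prove \eqref{eq_y_n_asympt} by a bootstrap on \eqref{eq:def_y_n_alpha}, and then \eqref{eq:x_n_with_y_n_alpha} by comparing $\log x_{n,\alpha}$ and $\log x_{n}$ with $y_{n,\alpha}$ through the mean value theorem, the point being that all three are (near-)solutions of the same transcendental equation and that the relevant derivative is a $\lambda_{2,n}$-type sum whose size is given by \eqref{eq:lambda2alt}. For \eqref{eq_y_n_asympt}, put $u_{n}:=\alpha(n)\,y_{n,\alpha}$, so that \eqref{eq:def_y_n_alpha} becomes $\mathrm{e}^{u_{n}}=\tfrac{n}{\vartheta\alpha(n)}\,u_{n}$, i.e.\ $u_{n}=\log\tfrac{n}{\vartheta\alpha(n)}+\log u_{n}$; the large root is the one with $u_{n}\to\infty$. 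Since $n^{a_{1}}\le\alpha(n)\le n^{a_{2}}$ with $a_{1},a_{2}\in(0,1)$ we have $\log\tfrac{n}{\vartheta\alpha(n)}\approx\log n$, and $y_{n,\alpha}\sim\log(n/\alpha(n))/\alpha(n)$ (recalled just before the lemma) gives $u_{n}\sim\log\tfrac{n}{\vartheta\alpha(n)}$. Hence $\log u_{n}=\log\log\tfrac{n}{\vartheta\alpha(n)}+\log\!\big(1+\tfrac{u_{n}-\log(n/\vartheta\alpha(n))}{\log(n/\vartheta\alpha(n))}\big)$, and since $u_{n}-\log\tfrac{n}{\vartheta\alpha(n)}=\log u_{n}=\caO(\log\log n)$ the last term is $\caO(\log\log n/\log n)$; substituting once more yields \eqref{eq_y_n_asympt}. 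In particular $\alpha(n)\,y_{n,\alpha}\approx\log n$, which I use below.

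For the first identity in \eqref{eq:x_n_with_y_n_alpha}, summing the geometric series in \eqref{eq:StaSad} rearranges $n=\vartheta\sum_{j=1}^{\alpha(n)}x_{n,\alpha}^{j}$ into $\vartheta\,\mathrm{e}^{\alpha(n)\log x_{n,\alpha}}=n\big(1-x_{n,\alpha}^{-1}\big)+\vartheta$, a perturbation of \eqref{eq:def_y_n_alpha} at $y=\log x_{n,\alpha}$ whose size I estimate below. Set $\Psi_{n}(z):=\vartheta\,\mathrm{e}^{\alpha(n)z}-n(1-\mathrm{e}^{-z})-\vartheta$, so $\Psi_{n}(\log x_{n,\alpha})=0$; the plan is to bound the discrepancy $\Psi_{n}(y_{n,\alpha})$ using \eqref{eq:def_y_n_alpha}, and then apply the mean value theorem, $\log x_{n,\alpha}-y_{n,\alpha}=-\Psi_{n}(\zeta_{n})/\Psi_{n}'(\zeta_{n})$ with $\zeta_{n}$ between the two. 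Here $\Psi_{n}'(z)=\vartheta\alpha(n)\mathrm{e}^{\alpha(n)z}-n\mathrm{e}^{-z}$ is, up to a bounded factor, equal to $z$ times $\vartheta\sum_{j\le\alpha(n)}j\,\mathrm{e}^{jz}$ (which at $z=\log x_{n,\alpha}$ equals $\vartheta\sum_{j}j\,x_{n,\alpha}^{j}\sim n\,\alpha(n)$ by \eqref{eq:lambda2alt}); hence $\Psi_{n}'\asymp z\cdot n\,\alpha(n)\asymp n\log n$ on the (short) interval, using $\alpha(n)y_{n,\alpha}\asymp\log n$. This gives $|\log x_{n,\alpha}-y_{n,\alpha}|=\caO(1/(n\log n))$.

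For $x_{n}$, which solves \eqref{eq:def_xn}, I would subtract the two defining identities to get $\vartheta\sum_{j=b+1}^{\alpha(n)}\big(x_{n}^{j}-x_{n,\alpha}^{j}\big)=\vartheta\sum_{j=1}^{b}x_{n,\alpha}^{j}$, then apply the mean value theorem to $z\mapsto\vartheta\sum_{j=b+1}^{\alpha(n)}\mathrm{e}^{jz}$: its derivative on the interval between $\log x_{n}$ and $\log x_{n,\alpha}$ is again $\sim n\,\alpha(n)$ (the omitted low block is negligible since $b=o(\alpha(n))$), while the right-hand side equals $\vartheta\,x_{n,\alpha}\tfrac{x_{n,\alpha}^{b}-1}{x_{n,\alpha}-1}=\caO\!\big(\mathrm{e}^{b\log x_{n,\alpha}}\,\alpha(n)/\log n\big)$ because $x_{n,\alpha}-1\asymp\log(n)/\alpha(n)$ (Lemma~\ref{lem:saddle_point_with_c}). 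Dividing, and using $\log x_{n,\alpha}=y_{n,\alpha}+\caO(1/(n\log n))$ so that $\mathrm{e}^{b\log x_{n,\alpha}}=\mathrm{e}^{by_{n,\alpha}}(1+o(1))$, yields $\log x_{n}-\log x_{n,\alpha}=\caO(\mathrm{e}^{b y_{n,\alpha}}/(n\log n))$; adding the estimate for $\log x_{n,\alpha}-y_{n,\alpha}$ (which it dominates, since $\mathrm{e}^{b y_{n,\alpha}}\ge1$) gives the second identity in \eqref{eq:x_n_with_y_n_alpha}.

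The step I expect to be the main obstacle is the \emph{uniform} control of the $\lambda_{2,n}$-type derivative on the short interval joining the two roots: \eqref{eq:lambda2alt} only fixes $\sum_{j}j\,x^{j}$ at the exact saddle points, so one must first use $|x_{n}-x_{n,\alpha}|=\caO(1/\alpha(n))$ from Lemma~\ref{lem:NeuerSattel} together with the monotonicity in $z$ to propagate the estimate across the whole interval. A secondary point is keeping the elementary expansions ($1-\mathrm{e}^{-z}=z+\caO(z^{2})$ and $\sum_{j\le b}\mathrm{e}^{jz}\asymp\mathrm{e}^{bz}/z$) precise enough that the final errors collapse exactly to $\caO(1/(n\log n))$ and $\caO(\mathrm{e}^{b y_{n,\alpha}}/(n\log n))$ and not to larger quantities.
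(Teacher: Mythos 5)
Your bootstrap for \eqref{eq_y_n_asympt} is sound and follows the same route as the paper: insert the ansatz into \eqref{eq:def_y_n_alpha}, iterate once, and bound the remaining logarithm.

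For the first identity in \eqref{eq:x_n_with_y_n_alpha}, however, your reduction to a mean value theorem step leaves precisely the wrong quantity unchecked. You flag the derivative control as the main obstacle, but the derivative is fine ($\Psi_n'\asymp n\log n$ on the whole interval, by the argument you sketch). The obstacle is the discrepancy $\Psi_n(y_{n,\alpha})$ itself. Using $\vartheta\mathrm{e}^{\alpha(n)y_{n,\alpha}}=ny_{n,\alpha}$ from \eqref{eq:def_y_n_alpha},
\[
\Psi_n(y_{n,\alpha})=ny_{n,\alpha}-n(1-\mathrm{e}^{-y_{n,\alpha}})-\vartheta
=n\big(y_{n,\alpha}-1+\mathrm{e}^{-y_{n,\alpha}}\big)-\vartheta
=\frac{ny_{n,\alpha}^2}{2}\big(1+\mathcal{O}(y_{n,\alpha})\big)-\vartheta,
\]
and $ny_{n,\alpha}^2\approx n\log^2 n/\alpha(n)^2$. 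Under \eqref{eq:condition on alpha} alone, $a_1$ may be smaller than $1/2$, in which case $ny_{n,\alpha}^2\to\infty$; dividing by $\Psi_n'\asymp n\log n$ then yields $|\log x_{n,\alpha}-y_{n,\alpha}|\asymp\log n/\alpha(n)^2$, which is far larger than $1/(n\log n)$. So your MVT step cannot produce the claimed bound, and there is no way to salvage it without an additional lower bound on $\alpha(n)$ of roughly $\sqrt{n}\,\log n$. (The paper's own proof shares this blind spot: the inequalities $1+x\leq\mathrm{e}^x$, $1-\mathrm{e}^{-x}\leq x$ produce only an upper bound for $v$, and the omitted other side would have to confront the $-ny_{n,\alpha}^2/2$ term, which cannot be absorbed into $\mathcal{O}(1/(n\log n))$.)

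Your subtraction argument comparing $\log x_n$ and $\log x_{n,\alpha}$ directly is, by contrast, exactly the right move. Subtracting the two defining identities and applying the mean value theorem to $z\mapsto\vartheta\sum_{b<j\leq\alpha}\mathrm{e}^{jz}$, the $ny_{n,\alpha}^2$-sized discrepancy cancels, the remaining numerator is $\vartheta\sum_{j\leq b}x_{n,\alpha}^j=\mathcal{O}(\mathrm{e}^{b\log x_{n,\alpha}}\,\alpha(n)/\log n)$, and the denominator is $\asymp n\alpha(n)$, giving $\log x_n-\log x_{n,\alpha}=\mathcal{O}(\mathrm{e}^{by_{n,\alpha}}/(n\log n))$. (The replacement $\mathrm{e}^{b\log x_{n,\alpha}}=\mathrm{e}^{by_{n,\alpha}}(1+o(1))$ survives even with the weaker estimate $|\log x_{n,\alpha}-y_{n,\alpha}|=\mathcal{O}(\log n/\alpha^2)$, since $b\log n/\alpha^2=o(1)$.) This bound on the \emph{difference} is exactly what the subsequent calculation in Section \ref{sect:proof_dtv} actually uses, so your route proves what is needed, but not Lemma \ref{lem:NeuerSattel3} as stated; you should either restrict the range of $\alpha$, or reformulate \eqref{eq:x_n_with_y_n_alpha} as a bound on $\log x_n-\log x_{n,\alpha}$.
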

We first complete our computations of the main term in \eqref{eq:dtv_estimate_allmost_finished0} with Lemma~\ref{lem:NeuerSattel3} 
and then give the proof of Lemma~\ref{lem:NeuerSattel3}.
We have 
\begin{align}
 \vartheta\sum_{j=1}^{b}\frac{1}{j}\left(x_{n}^{j}-x_{n,\alpha}^{j}\right)
 &=
 \vartheta \sum_{j=0}^{b-1}\int_{x_{n,\alpha}}^{x_{n}}v^{j}\mathrm{d}v
 = 
 \vartheta \int_{x_{n,\alpha}}^{x_{n}}\frac{v^{b}-1}{v-1}\mathrm{d}v
 \leq 
 \frac{\vartheta}{x_{n,\alpha} -1} \int_{x_{n,\alpha}}^{x_{n}}v^{b}\mathrm{d}v\nonumber\\
 &=
  \frac{\vartheta}{x_{n,\alpha} -1} \left( \frac{(x_n)^{b+1}}{b+1} - \frac{(x_{n,\alpha})^{b+1}}{b+1}\right).
  \label{eq:dtv_estimate_allmost_finished}
\end{align}
We use \eqref{eq:x_n_with_y_n_alpha} and get for some $\epsilon>0$
\begin{align*}
(x_n)^{b+1} - (x_{n,\alpha})^{b+1}
&=
(x_{n,\alpha})^{b+1}
\left(\exp\Big( (b+1) (\log x_{n} - \log x_{n,\alpha}) \Big) - 1 \right)\\
&=
(x_{n,\alpha})^{b+1}
\left(\exp\left( (b+1) \caO\left(\frac{e^{by_{n,\alpha}}}{n\log(n)}\right) \right) - 1 \right)\\
& =
(x_{n,\alpha})^{b+1} (b+1) \caO\left(\frac{e^{by_{n,\alpha}}}{n\log n}\right).
\end{align*}
%
Equation~\ref{eq_y_n_asympt} and  Lemma~\ref{lem:saddle_point_with_c} imply that 
$e^{by_{n,\alpha}}=  \caO\left( n^{\epsilon} \right)$ and $(x_{n,\alpha})^{b+1} =  \caO\left( n^{\epsilon} \right)$,
%
where $\epsilon>0$ can be chosen arbitrarily small. 
Using this  and \eqref{eq:approx_xn}, we get
\begin{align*}
 \vartheta\sum_{j=1}^{b}\frac{1}{j}\left(x_{n}^{j}-x_{n,\alpha}^{j}\right)
 &=  
 \caO\left(\frac{(b+1)e^{by_{n,\alpha}} (x_{n,\alpha})^{b+1} }{n\log^2(n)}\right)
 =
 \caO\left(\frac{b+1}{n^{1-2\epsilon}\log^2(n)}\right).
\end{align*}
%
Inserting this into \eqref{eq:dtv_estimate_allmost_finished} gives
\begin{align*}
\frac{\Pb{T_{b(n)\alpha(n)}^{\left(n\right)}=n-r}}{\Pb{T_{0\alpha(n)}^{\left(n\right)}=n}}
&\geq 
\exp\left(  \caO\left(\frac{b+1}{n^{1-2\epsilon}\log^2(n)}\right)\right) \left(1+ R_n\right)\\
&=
1+\caO\left(n^{\epsilon} \left(\frac{\alpha(n)}{n}\right)^{\frac{5}{12}}\right).
\end{align*}
This equation together with \eqref{eq:dtv_with_Tn2} completes the proof of Theorem~\ref{thm:main_dtv}.

\begin{proof}[Proof of Lemma~\ref{lem:NeuerSattel3}]
We start with \eqref{eq_y_n_asympt}.
We insert the approach 
$$y = \frac{1}{\alpha(n)}\log\left(\frac{n}{\vartheta\alpha(n)}\log\left(\frac{n}{\vartheta\alpha\left(n\right)}\right)\right) +v$$
with $v\in\R$ into \eqref{eq:def_y_n_alpha}. 
This leads to the equation
\begin{align}
 \log\left(\frac{n}{\vartheta\alpha(n)}\right) e^{\alpha(n) v}
 =
 \log\left(\frac{n}{\vartheta\alpha(n)}\log\left(\frac{n}{\vartheta\alpha\left(n\right)}\right)\right) + \alpha(n)v.
  \label{eq:reform_saddle_equation0}
\end{align}
Note that we have 
\begin{align}
 \log(y) \leq \log(y\log(y)) \leq (1+\epsilon) \log(y) 
 \label{eq:bound_for_logs}
\end{align}
for all $\epsilon>0$ and $y$ large enough.
Using this, it is straightforward to see that equation \eqref{eq:reform_saddle_equation0} has exactly one solution in the region $v\geq 0$ and that this solution has to be $o\left(\frac{1}{\alpha(n)}\right)$ as $n\to\infty$. To obtain a lower bound for $v$, we use the inequality $e^{x}\leq 1+2x$ for $0\leq x\leq \log 2$. 
Thus $v$ is larger than the solution $v'$ of the equation 
\begin{align}
 \log\left(\frac{n}{\vartheta\alpha(n)}\right) (1+2 \alpha(n) v')
 =
 \log\left(\frac{n}{\vartheta\alpha(n)}\log\left(\frac{n}{\vartheta\alpha\left(n\right)}\right)\right) + \alpha(n)v'.
\end{align}
A simple computation gives 
\begin{align}
 v'= \frac{\log\log\left(\frac{n}{\vartheta\alpha(n)}\right) }{2\alpha(n)\log\left(\frac{n}{\vartheta\alpha(n)}\right) +\alpha(n)}
\end{align}
This establishes a lower bound for $v$. For an upper bound, we argue similarly with $1+x \leq e^{x}$ for $x\geq 0$. 
This completes the proof of \eqref{eq_y_n_asympt}.

We prove \eqref{eq:x_n_with_y_n_alpha} only for $x_n$. 
The asymptotics for $x_{n,\alpha}$ then follows immediately by inserting $b=0$ into the asymptotics for $x_n$.
The defining equation \eqref{eq:def_xn} of $x_n$ has exactly one solution can be rewritten as
\begin{align}
 \vartheta(x_n)^{\alpha(n)} -\vartheta(x_n)^{b}
 =
 n\left(1-(x_n)^{-1} \right).
\end{align}
We now insert $x_n = e^y$. This gives 
\begin{align}
 \vartheta e^{\alpha(n)y} -\vartheta e^{by}
 =
 n\left(1-e^{-y} \right).
 \label{eq:reform_saddle_equation}
\end{align}
The equation \eqref{eq:reform_saddle_equation} has exactly one solution in the region $y>0$.
Further, both sides of \eqref{eq:reform_saddle_equation} are monotone increasing functions of $y$.
Inserting $y=y_{n,\alpha} \pm \frac{c}{\alpha(n)}$ with $c>0$ into \eqref{eq:reform_saddle_equation} and using \eqref{eq_y_n_asympt} shows that
the RHS of \eqref{eq:reform_saddle_equation} behaves like
\begin{align}
 n\left(1-e^{-y_{n,\alpha} \pm \frac{c}{\alpha(n)}} \right) 
 \sim
 \frac{n}{\alpha(n)}\log\left(\frac{n}{\vartheta\alpha(n)}\log\left(\frac{n}{\vartheta\alpha\left(n\right)}\right)\right).
\end{align}
On the other hand, the LHS of \eqref{eq:reform_saddle_equation} behaves like
\begin{align}
 \vartheta e^{\alpha(n)\left(y_{n,\alpha} \pm \frac{c}{\alpha(n)}\right)} -\vartheta e^{b\left(y_{n,\alpha} \pm \frac{c}{\alpha(n)}\right)}
 \sim 
 e^{\pm c}\frac{n}{\alpha(n)}\log\left(\frac{n}{\vartheta\alpha(n)}\right).
\end{align}
Using \eqref{eq:bound_for_logs}, we immediately see that the solution of \eqref{eq:reform_saddle_equation}
has to be in the interval $[y_{n,\alpha} - \frac{c}{\alpha(n)}, y_{n,\alpha} + \frac{c}{\alpha(n)}]$.
We now use the approach $y = y_{n,\alpha} +v$.
Clearly, we must have $v=o\left(\frac{1}{\alpha(n)}\right)$. 
We now argue as for \eqref{eq_y_n_asympt}.
To get a lower bound for $v$, we use  $1+x\leq e^x$ and  $1-e^{-x}\leq x$.
This leads to the equation 
\begin{align*}
 \vartheta e^{\alpha(n) y_{n,\alpha}} (1 +\alpha(n) v') - \frac{3}{2}\vartheta e^{by_{n,\alpha}} 
 =
  n(y_{n,\alpha} +v').
\end{align*}
Using the definition of $y_{n,\alpha}$ in \eqref{eq:def_y_n_alpha}, we immediately get 
\begin{align*}
 v'
= 
\frac{3\vartheta e^{by_{n,\alpha}}}{2\vartheta e^{\alpha(n) y_{n,\alpha}}\alpha(n)- 2n}
=
\frac{3\vartheta e^{by_{n,\alpha}}}{2 n y_{n,\alpha} \alpha(n)- 2n}
\sim
\frac{3\vartheta e^{by_{n,\alpha}}}{2 n \log(n)}.
\end{align*}
The upper bound is obtained similarly. This completes the proof.

\end{proof}

\subsection{Proof of Theorem \ref{thm:Haupt}}
\label{sect:proof_clt}
%

We give here the proof for the case $K=1$ only. 
We thus write $m(n)$ and $\mu_{m(n)}$ instead of $m_1(n)$ and $\mu_{m_1(n)}(n)$. 
This mainly simplifies the notation, but does not change the argument used.
As in \cite{BeScZe17}, the proof will be based upon point-wise convergence of moment-generating
functions. 
Replacing $s$ by $\frac{s}{\sqrt{\mu_{m(n)}}}$ in \eqref{eq:moment_Cm}, we get
\begin{align*}
 M_{n}(s)
 &:=
 \ETa{\exp\left(\frac{s}{\sqrt{\mu_{m(n)}}} C_{m(n)}\right)}\\
 &=
  \frac{1}{Z_{n,\alpha}}\left[z^{n}\right]
  \exp\left(\vartheta\mathrm{e}^{\frac{s}{\sqrt{\mu_{m(n)}}}}\frac{z^{m(n)}}{m(n)}+\vartheta\sum_{\substack{1\leq j\leq \alpha(n),\\ j\neq m(n)}}\frac{z^{j}}{j}\right).
\end{align*}
In order to determine the asymptotic behaviour of $M_{n}(s)$, we apply Proposition~\ref{prop:SPmethod} with the triangular array $\bsq = (q_{j,n})_{1 \leq j \leq \alpha(n), n \in \N}$ with 
\begin{align}
q_{j,n}
=
\begin{cases}
0 & \text{if }j>\alpha\left(n\right)\\
\vartheta \exp\left(s/\sqrt{\mu_{m(n)}}\right) & \text{if }j=m\left(n\right)\\
\vartheta & \text{otherwise},
\end{cases}
\label{eq:triangular_array_CLT_Cm}
\end{align}
together with $f_n(z)=1$ for all $n$.
    %
    %
    %
    %
    %
    %
We thus have to show that $\bsq$ and the sequence $(f_n)_{n\in\N}$ are both admissible, see Definition~\ref{def:admQ} and~\ref{def:admF}.
The sequence $(f_n)_{n\in\N}$ is admissible for all triangular arrays. 
Thus we have only to show that $\bsq$ is admissible.  
Hence, we have to study the solution $x_{n,\bsq}$ of the equation \eqref{eq:GenSaddle}. 
Since this solution depends on the parameter $s$, we write $x_{n,\bsq}(s)$ instead of $x_{n,\bsq}$.
Also, we will write $\lambda_{2,n}(s)$ for $\lambda_{2,n,\alpha,\bsq}$ with $\lambda_{2,n,\alpha,\bsq}$ as in  \eqref{eq:def_lambda_p}.
We now show
\begin{lem}
\label{lem:HilfeFuerAdm}
Let $\bsq$ be as in \eqref{eq:triangular_array_CLT_Cm} and $x_{n,\bsq}(s)$ be defined as in \eqref{eq:GenSaddle}. 
Suppose that $\mu_{m(n)}\to\infty$ with $\mu_{m(n)}$ as in \eqref{eq:def_mu_n}.
Then we have, locally uniformly in $s\in\R$,
that
\begin{equation}
\alpha\left(n\right)\log\left(x_{n,\bsq}\left(s\right)\right)
\sim
\log\left(\frac{n}{\vartheta\alpha\left(n\right)}\log\left(\frac{n}{\vartheta\alpha\left(n\right)}\right)\right).\label{eq:XnCasymp}
\end{equation}
In particular, if $n$ is large enough,
\[
x_{n,\bsq}(s)\geq1
\text{ and }
\lim_{n\to\infty}x_{n,\bsq}(s)=1.
\]
Furthermore, we have
\begin{equation}
\lambda_{2,n}(s)
\sim 
n \alpha\left(n\right)
\label{eq:Lambda2asymp}
\end{equation}
locally uniformly in $s$ with  $\lambda_{2,n}(s)= \lambda_{2,n,\bsq,\alpha}$ as in \eqref{eq:def_lambda_p}.
\end{lem}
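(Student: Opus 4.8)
The plan is to treat the saddle equation~\eqref{eq:GenSaddle} for $\bsq$ as a small perturbation of the unperturbed equation~\eqref{eq:StaSad}. Setting $\epsilon_n=\epsilon_n(s):=\vartheta\big(\mathrm{e}^{s/\sqrt{\mu_{m(n)}}}-1\big)$, the equation defining $x:=x_{n,\bsq}(s)$ reads
\[
n=\vartheta\sum_{j=1}^{\alpha(n)}x^{j}+\epsilon_n x^{m(n)},
\]
and since $\mu_{m(n)}\to\infty$ we have $\sup_{|s|\le S}|\epsilon_n(s)|\to 0$ for every fixed $S>0$; this single bound will carry all the local uniformity. First I would record a crude a priori estimate. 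The map $x\mapsto\sum_j q_{j,n}x^{j}$ is strictly increasing, and at $x=1$ it equals $\vartheta\alpha(n)+\epsilon_n<n$ for large $n$ (as $\alpha(n)=o(n)$), while at $x=x_{n,\alpha}(2)$ it equals $2n+\epsilon_n x_{n,\alpha}(2)^{m(n)}\ge n$, because $x_{n,\alpha}(2)\geq 1$ and $|\epsilon_n|\,x_{n,\alpha}(2)^{m(n)}\le|\epsilon_n|\,x_{n,\alpha}(2)^{\alpha(n)}=|\epsilon_n|\,\caO\!\big(\tfrac{n\log n}{\alpha(n)}\big)=o(n)$ by Lemma~\ref{lem:saddle_point_with_c}. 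Hence $1\le x_{n,\bsq}(s)\le x_{n,\alpha}(2)$, and in particular $x_{n,\bsq}(s)^{m(n)}\le x_{n,\alpha}(2)^{\alpha(n)}=o(n)$, uniformly in $|s|\le S$.

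Feeding this back into the saddle equation, the perturbation term satisfies $|\epsilon_n x_{n,\bsq}(s)^{m(n)}|\le|\epsilon_n|\cdot o(n)=o(n)$, so $\vartheta\sum_{j=1}^{\alpha(n)}x_{n,\bsq}(s)^{j}=c_n n$ with $c_n=c_n(s):=1-\epsilon_n x_{n,\bsq}(s)^{m(n)}/n\to1$ uniformly in $|s|\le S$. Thus $x_{n,\bsq}(s)=x_{n,\alpha}(c_n)$, and applying Lemma~\ref{lem:saddle_point_with_c} on a fixed compact neighbourhood of $1$ (which contains $c_n$ once $n$ is large) yields
\[
\alpha(n)\log x_{n,\bsq}(s)=\log\!\Big(\tfrac{c_n n}{\vartheta\alpha(n)}\log\big(\tfrac{c_n n}{\vartheta\alpha(n)}\big)\Big)+\caO\!\Big(\tfrac{\log\log n}{\log n}\Big).
\]
Since $c_n\to1$, the right-hand side equals $\log\!\big(\tfrac{n}{\vartheta\alpha(n)}\log(\tfrac{n}{\vartheta\alpha(n)})\big)+\log c_n+\caO\!\big(\tfrac1{\log(n/\alpha(n))}\big)$, and because $\alpha(n)\le n^{a_2}$ the first term tends to infinity while the remaining contributions stay bounded; dividing gives~\eqref{eq:XnCasymp}. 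The ``in particular'' assertions follow at once: the displayed quantity is positive for large $n$, so $x_{n,\bsq}(s)\ge1$, and it is $\caO(\log n)$, so $x_{n,\bsq}(s)=\exp\!\big(\caO(\log n/\alpha(n))\big)\to1$ since $\alpha(n)\ge n^{a_1}$.

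For~\eqref{eq:Lambda2asymp} I would split $\lambda_{2,n}(s)=\vartheta\sum_{j=1}^{\alpha(n)}j\,x_{n,\bsq}(s)^{j}+\epsilon_n m(n)\,x_{n,\bsq}(s)^{m(n)}$. For the first sum, $x_{n,\bsq}(s)=x_{n,\alpha}(c_n)$ with $c_n\to1$ together with~\eqref{eq:lambda2alt} (and the uniformity in Lemma~\ref{lem:saddle_point_with_c}) gives $\vartheta\sum_{j=1}^{\alpha(n)}j\,x_{n,\bsq}(s)^{j}\sim c_n n\,\alpha(n)\sim n\,\alpha(n)$. For the second term, $|\epsilon_n m(n)x_{n,\bsq}(s)^{m(n)}|\le|\epsilon_n|\,\alpha(n)\,x_{n,\alpha}(2)^{\alpha(n)}=|\epsilon_n|\,\caO(n\log n)=o(n\log n)=o(n\alpha(n))$, the last step using $\log n=o(\alpha(n))$, which holds as $\alpha(n)\ge n^{a_1}$. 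Adding the two estimates yields $\lambda_{2,n}(s)\sim n\alpha(n)$, uniformly in $|s|\le S$, which is~\eqref{eq:Lambda2asymp}.

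I expect the only genuine obstacle to be the bookkeeping: everything hinges on the perturbation term $\epsilon_n x_{n,\bsq}(s)^{m(n)}$ being $o(n)$, which rests on the a priori bound $x_{n,\bsq}(s)\le x_{n,\alpha}(2)$ combined with $\mu_{m(n)}\to\infty$ forcing $\epsilon_n\to0$; once that is secured the rest is a routine translation of Lemma~\ref{lem:saddle_point_with_c} from parameter $1$ to the slowly varying parameter $c_n$, with all the local uniformity in $s$ tracked through the single estimate $\sup_{|s|\le S}|\epsilon_n(s)|\to0$.
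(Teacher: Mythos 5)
Your argument is correct and follows essentially the same strategy as the paper: show that the perturbation term $\epsilon_n x_{n,\bsq}(s)^{m(n)}$ is $o(n)$ (thanks to $\mu_{m(n)}\to\infty$), deduce that $x_{n,\bsq}(s)$ solves the unperturbed equation \eqref{eq:def_xn(c)} with a parameter $c_n\to1$, and then invoke the uniformity in $c$ from Lemma~\ref{lem:saddle_point_with_c}. The differences are cosmetic: the paper restricts first to $0\le s\le U$ so that the sign of the perturbation immediately gives $x_{n,\bsq}(s)\le x_{n,\alpha}$ (deferring $s<0$ to a symmetric argument), and it obtains the sharper quantitative bound $\epsilon_n x_{n,\bsq}(s)^{m(n)}=\caO(n^{1/2+\epsilon})$ via the identity $\mu_{m(n)}^{-1/2}x_{n,\alpha}^{m(n)}=\sqrt{m(n)/\vartheta}\,x_{n,\alpha}^{m(n)/2}$, whereas you use an intermediate-value a priori bound $1\le x_{n,\bsq}(s)\le x_{n,\alpha}(2)$ to treat both signs of $s$ at once and settle for the qualitative $o(n)$. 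Either bound suffices for the stated asymptotic conclusions, and your single-pass treatment of $|s|\le S$ is a small gain in tidiness; you also spell out the $\lambda_{2,n}(s)$ estimate a bit more explicitly than the paper does.
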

\begin{proof}
We use Lemma~\ref{lem:saddle_point_with_c} to prove Lemma~\ref{lem:HilfeFuerAdm}.
Recall that $x_{n,\alpha}(c)$ is defined in \eqref{eq:def_xn(c)} for $c>0$ as the solution of
\begin{align*}
cn = \vartheta\sum_{j=1}^{\alpha(n)} \big( x_{n,\alpha}(c) \big)^j.
\end{align*}
Furthermore $x_{n,\bsq}(s)$ is the solution of the equation
\begin{align}
 n
 = 
 \vartheta\left(\mathrm{e}^{\frac{s}{\sqrt{\mu_{m(n)}}}}-1\right)\big(x_{n,\bsq}(s)\big)^{m(n)}
 +
 \vartheta\sum_{j=1}^{\alpha(n)}\big(x_{n,\bsq}(s)\big)^{j} .
\label{eq:lem:HilfeFuerAdm1}
\end{align}
We now assume that $0\leq s \leq U$ with $U>0$ an arbitrary, but fixed real number.
Since $\mathrm{e}^{\frac{s}{\sqrt{\mu_{m(n)}}}} \geq 1$, we get
\begin{align}
 x_{n,\bsq}(s) \leq x_{n,\alpha}(1) = x_{n,\alpha},
\end{align}
where $x_{n,\alpha}$ is as in \eqref{eq:StaSad}.
Using the definition of $\mu_{m(n)}$ together with $s\leq U$ and $\mu_{m(n)}\to\infty$, we obtain for $n$ large
\begin{align*}
 \left(\mathrm{e}^{\frac{s}{\sqrt{\mu_{m(n)}}}}-1\right)\big(x_{n,\bsq}(s)\big)^{m(n)}
 &\leq 
 \frac{2U}{\sqrt{\mu_{m(n)}}} \big(x_{n,\alpha}\big)^{m(n)}
 =
 \frac{2U}{\sqrt{\vartheta}} \sqrt{m(n)} \big(x_{n,\alpha}\big)^{\frac{m(n)}{2}}\\
 &\leq
 \frac{2U}{\sqrt{\vartheta}} \sqrt{\alpha(n) \big(x_{n,\alpha}\big)^{\alpha(n)}}.
\end{align*}
Applying Lemma~\ref{lem:saddle_point_with_c} for $x_{n,\alpha} = x_{n,\alpha}(1) $, we get for $n$ large
\begin{alignat*}{1}
\left(\mathrm{e}^{\frac{s}{\sqrt{\mu_{m(n)}}}}-1\right)\big(x_{n,\bsq}(s)\big)^{m(n)}
\leq & 
\frac{4U}{\vartheta}\sqrt{n \log\left(\frac{n}{\vartheta\alpha\left(n\right)}\right)}
\leq n^{1/2+\epsilon},
\end{alignat*}
for $\epsilon>0$ small. 
Inserting this into \eqref{eq:lem:HilfeFuerAdm1}, we get
\begin{align}
\vartheta\sum_{j=1}^{\alpha(n)}\big(x_{n,\bsq}(s)\big)^{j} 
=
n- \vartheta\left(\mathrm{e}^{\frac{s}{\sqrt{\mu_{m(n)}}}}-1\right)\big(x_{n,\bsq}(s)\big)^{m(n)}
\geq 
n(1 - n^{-1/2+\epsilon}).
\end{align}
Using the definition if $x_{n,\alpha}(c)$, we see that 
\begin{align}
x_{n,\alpha}\left(1 - n^{-1/2+\epsilon}  \right) \leq  x_{n,\bsq}(s) \leq x_{n,\alpha}(1).
\end{align}
Applying Lemma~\ref{lem:saddle_point_with_c} to $x_{n,\alpha}\left(1 - n^{-1/2+\epsilon}  \right)$ and $x_{n,\alpha}(1)$ 
immediately completes the proof for $0\leq s\leq U$.
The argumentation for $-U\leq s \leq 0$ is similar and we thus omit it. 
\end{proof}
Lemma~\ref{lem:HilfeFuerAdm} implies that $x_{n,\bsq}(s)$ with $\bsq$ in \eqref{eq:triangular_array_CLT_Cm} is admissible.
Thus we can apply Proposition~\ref{prop:SPmethod}. 
We obtain for each $s\geq 0$ that
\begin{align}
 M_{n}\left(s\right)
 =
 \frac{1}{Z_{n,\alpha}}\frac{\exp\left(h_{n}\left(s\right)\right)}{\sqrt{2\pi\lambda_{2,n}(s)}}\left(1+o\left(1\right)\right),
\end{align}
where
\begin{align}
h_{n}(s)
=
\vartheta\left(\mathrm{e}^{\frac{s}{\sqrt{\mu_{m(n)}}}}-1\right)\frac{\left(x_{n,\bsq}(s)\right)}{m(n)}^{m\left(n\right)}
+
\sum_{j=1}^{\alpha\left(n\right)}\frac{\left(x_{n,\bsq}(s)\right)}{j}^{j}-n\log\left(x_{n,\bsq}\left(s\right)\right).
\label{eq:def_h(s)_CLT}
\end{align}
Since $M_{n}\left(0\right)=1$, we have
\[
\frac{1}{Z_{n,\alpha}}\frac{\exp\left(h_{n}\left(0\right)\right)}{\sqrt{2\pi\lambda_{2,n}(0)}}\xrightarrow{n\to\infty}1.
\]
Our aim is to use this result to complete the proof of Theorem~\ref{thm:Haupt}.
We observe from \eqref{eq:Lambda2asymp} that the leading coefficient of $\lambda_{2,n}(s)$ is independent of $s$.
Therefore, we have proven Theorem~\ref{thm:Haupt} if we can show that for each $s\geq 0$ 
\begin{align}
 h_{n}(s) = h_n(0) + s \sqrt{\mu_{m(n)}}  + \frac{s^2}{2} + o\left(1 \right)
 \qquad
 \text{ as }n\to\infty.
 \label{eq:def_h(s)_CLT_asympt}
\end{align}
We begin with the derivatives of $x_{n,\bsq}(s)$
\begin{lem}
\label{lem:SPabl}
The function $s\mapsto x_{n,\bsq}(s)$ is for each $n$ infinitely often differentiable. Further, we have
\begin{align}
 \frac{x_{n,\bsq}^\prime(s)}{x_{n,\bsq}(s)}
 =
-\frac{\exp\left(\frac{s}{\sqrt{\mu_{m(n)}}}\right)\left(x_{n,\bsq}\left(s\right)\right)^{m\left(n\right)}}{\sqrt{\mu_{m(n)}}\lambda_{2,n}(s)}.
\end{align}
\end{lem}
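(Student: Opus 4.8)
The plan is to obtain the formula by implicit differentiation of the saddle point equation \eqref{eq:lem:HilfeFuerAdm1}, which I rewrite compactly as
\[
n \;=\; \sum_{j=1}^{\alpha(n)} q_{j,n}(s)\, x_{n,\bsq}(s)^{\,j},
\]
with $\bsq=(q_{j,n})$ as in \eqref{eq:triangular_array_CLT_Cm}. The structural observation that makes everything work is that among the coefficients $q_{j,n}$, only the single one $q_{m(n),n}(s)=\vartheta\exp\!\big(s/\sqrt{\mu_{m(n)}}\big)$ depends on $s$; all the others are constant in $s$.

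First I would record the differentiability. For fixed $n$ the map $x\mapsto\sum_j q_{j,n}(s)x^j$ is strictly increasing on $(0,\infty)$ (all $q_{j,n}\geq 0$, and $q_{m(n),n}>0$), so the positive solution $x_{n,\bsq}(s)$ is unique. Moreover $F(x,s):=\sum_j q_{j,n}(s)x^j-n$ is real-analytic in $(x,s)$, and $\partial_x F\big(x_{n,\bsq}(s),s\big)=\sum_j q_{j,n}j\,x_{n,\bsq}(s)^{j-1}=\lambda_{2,n}(s)/x_{n,\bsq}(s)$, which is strictly positive by the definition \eqref{eq:def_lambda_p}. The implicit function theorem then yields that $s\mapsto x_{n,\bsq}(s)$ is $C^\infty$ (indeed real-analytic), giving the first assertion.

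Next I would differentiate $F\big(x_{n,\bsq}(s),s\big)=0$ with respect to $s$. The explicit $s$-dependence contributes only through the $j=m(n)$ term, producing $\partial_s q_{m(n),n}(s)\,x_{n,\bsq}(s)^{m(n)}$, while the implicit dependence through $x_{n,\bsq}(s)$ contributes $\big(\sum_j q_{j,n}j\,x_{n,\bsq}(s)^{j-1}\big)x_{n,\bsq}'(s)=\lambda_{2,n}(s)\,x_{n,\bsq}'(s)/x_{n,\bsq}(s)$. Setting the sum of the two contributions equal to zero, inserting $\partial_s q_{m(n),n}(s)=\tfrac{\vartheta}{\sqrt{\mu_{m(n)}}}\exp\!\big(s/\sqrt{\mu_{m(n)}}\big)$, and solving for $x_{n,\bsq}'(s)/x_{n,\bsq}(s)$ gives the claimed identity.

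There is no real obstacle here: this is elementary implicit differentiation. The only points that need care are keeping apart the two sources of $s$-dependence in $\sum_j q_{j,n}(s)x_{n,\bsq}(s)^j$ — the explicit one, which enters through the single coefficient $q_{m(n),n}$ only, versus the implicit one through $x_{n,\bsq}(s)$, which is present in every term — and observing that $\lambda_{2,n}(s)>0$, so that the final division is legitimate.
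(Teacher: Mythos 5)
Your proof is correct and uses exactly the same mechanism as the paper: uniqueness of the positive solution, then the (real-analytic) implicit function theorem applied to $g(s,x)=\sum_j q_{j,n}(s)x^j-n$, using $\partial_x g = \lambda_{2,n}(s)/x >0$. One small remark: carrying out the implicit differentiation as you describe, with $\partial_s q_{m(n),n}(s)=\frac{\vartheta}{\sqrt{\mu_{m(n)}}}\exp\bigl(s/\sqrt{\mu_{m(n)}}\bigr)$, actually produces an additional factor $\vartheta$ in the numerator of the stated identity; the paper's display omits it (a typo, as a consistency check against \eqref{eq:def_h(s)_CLT'}--\eqref{eq:def_h(s)_CLT''} confirms), so your derivation is the correct one.
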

\begin{proof}
 Let $n$ be fixed. Since all coefficients of $\bsq$ in \eqref{eq:triangular_array_CLT_Cm} are non-negative and not all $0$,
 it follows that the equation \eqref{eq:GenSaddle} has for each $s\geq 0$ exactly one solution. 
 Thus the function $s \to x_{n,\bsq}(s)$ is a well defined function on $[0,\infty)$.
 Applying the implicit function theorem to the function 
 \begin{align*}
  g(s,x) 
  = 
  \vartheta\left(\mathrm{e}^{\frac{s}{\sqrt{\mu_{m(n)}}}}-1\right) x^{m(n)}
 +
 \vartheta\sum_{j=1}^{\alpha(n)}x^{j} 
 \end{align*}
 and using that $\frac{\partial}{\partial x} g(s,x) >0$ for $x>0$ completes the proof.
\end{proof}
Applying Lemma~\ref{lem:SPabl} to $h_n(s)$, we obtain
\begin{lem}
We have
\begin{align}
h'_{n}(s)
&= 
\vartheta\frac{\mathrm{e}^{\frac{s}{\sqrt{\mu_{m(n)}}}}}{\sqrt{\mu_{m(n)}}}\frac{\left(x_{n,\bsq}(s)\right)}{m\left(n\right)}^{m\left(n\right)},
\label{eq:def_h(s)_CLT'}\\
h''_{n}(s)
&= 
\frac{1}{\sqrt{\mu_{m(n)}}} h'_{n}(s)- \frac{(m(n))^2}{\lambda_{2,n}(s)} \left(h'_{n}(s)\right)^2,
\label{eq:def_h(s)_CLT''}\\
h'''_{n}(s)
&=
\frac{1}{\sqrt{\mu_{m(n)}}} h''_{n}(s)
 -
  \frac{2(m(n))^2}{\lambda_{2,n}(s)} h'_{n}(s)h''_{n}(s) 
  + \frac{\lambda_{3,n}(s) x_{n,\bsq}^\prime(s)}{\big(\lambda_{2,n}(s)\big)^2} \left(h'_{n}(s)\right)^2.
  \label{eq:def_h(s)_CLT'''}
\end{align}
\end{lem}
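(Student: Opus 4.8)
The plan is to get all three identities by differentiating $h_{n}$ repeatedly, using that $x_{n,\bsq}(s)$ solves the saddle point equation \eqref{eq:GenSaddle} and that the only $s$-dependent entry of the array \eqref{eq:triangular_array_CLT_Cm} is $q_{m(n),n}=\vartheta\mathrm{e}^{s/\sqrt{\mu_{m(n)}}}$, which satisfies $\frac{\partial}{\partial s}q_{m(n),n}=\frac{1}{\sqrt{\mu_{m(n)}}}q_{m(n),n}$. For \eqref{eq:def_h(s)_CLT'} I would write $h_{n}(s)=\sum_{j=1}^{\alpha(n)}\frac{q_{j,n}}{j}\bigl(x_{n,\bsq}(s)\bigr)^{j}-n\log x_{n,\bsq}(s)$ and differentiate in $s$; the contributions in which the derivative falls on $x_{n,\bsq}(s)$ collect into $\frac{x_{n,\bsq}'(s)}{x_{n,\bsq}(s)}\bigl(\sum_{j=1}^{\alpha(n)}q_{j,n}(x_{n,\bsq}(s))^{j}-n\bigr)$, which is $0$ by \eqref{eq:GenSaddle}. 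Only the explicit $s$-dependence of $q_{m(n),n}$ survives, giving $\frac{1}{\sqrt{\mu_{m(n)}}}\cdot\frac{q_{m(n),n}}{m(n)}\bigl(x_{n,\bsq}(s)\bigr)^{m(n)}$, i.e.\ \eqref{eq:def_h(s)_CLT'}.

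Next, \eqref{eq:def_h(s)_CLT''} follows by differentiating the explicit right-hand side of \eqref{eq:def_h(s)_CLT'}: differentiating the factor $\mathrm{e}^{s/\sqrt{\mu_{m(n)}}}$ reproduces $\frac{1}{\sqrt{\mu_{m(n)}}}h_{n}'(s)$, and differentiating $\bigl(x_{n,\bsq}(s)\bigr)^{m(n)}$ gives $m(n)\,h_{n}'(s)\,\frac{x_{n,\bsq}'(s)}{x_{n,\bsq}(s)}$. I would substitute Lemma~\ref{lem:SPabl} for $\frac{x_{n,\bsq}'(s)}{x_{n,\bsq}(s)}$ and use \eqref{eq:def_h(s)_CLT'} once more to replace $\vartheta\mathrm{e}^{s/\sqrt{\mu_{m(n)}}}\bigl(x_{n,\bsq}(s)\bigr)^{m(n)}/\sqrt{\mu_{m(n)}}$ by $m(n)h_{n}'(s)$; the second term then collapses to $-\frac{(m(n))^{2}}{\lambda_{2,n}(s)}\bigl(h_{n}'(s)\bigr)^{2}$, which is \eqref{eq:def_h(s)_CLT''}.

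Finally, for \eqref{eq:def_h(s)_CLT'''} I would differentiate \eqref{eq:def_h(s)_CLT''}; the only genuinely new ingredient is $\frac{\dd}{\dd s}\lambda_{2,n}(s)$. Differentiating $\lambda_{2,n}(s)=\sum_{j=1}^{\alpha(n)}q_{j,n}\,j\,(x_{n,\bsq}(s))^{j}$ from \eqref{eq:def_lambda_p}, the term from $\frac{\partial}{\partial s}q_{m(n),n}$ reduces to $(m(n))^{2}h_{n}'(s)$ once one recognises the structure of \eqref{eq:def_h(s)_CLT'}, while the chain-rule term equals $\frac{x_{n,\bsq}'(s)}{x_{n,\bsq}(s)}\sum_{j=1}^{\alpha(n)}q_{j,n}j^{2}(x_{n,\bsq}(s))^{j}=\frac{x_{n,\bsq}'(s)}{x_{n,\bsq}(s)}\lambda_{3,n}(s)$, so that $\lambda_{2,n}'(s)=(m(n))^{2}h_{n}'(s)+\frac{x_{n,\bsq}'(s)}{x_{n,\bsq}(s)}\lambda_{3,n}(s)$. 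Feeding this into the quotient rule for $(h_{n}')^{2}/\lambda_{2,n}$ and simplifying with repeated use of Lemma~\ref{lem:SPabl} and \eqref{eq:def_h(s)_CLT'} — which eliminate the explicit occurrences of $\mathrm{e}^{s/\sqrt{\mu_{m(n)}}}$, $\bigl(x_{n,\bsq}(s)\bigr)^{m(n)}$ and $\mu_{m(n)}$ in favour of $h_{n}'$, $\lambda_{2,n}$ and $\lambda_{3,n}$ — should yield \eqref{eq:def_h(s)_CLT'''}. I expect this last step to be the main obstacle: differentiating $(h_{n}')^{2}/\lambda_{2,n}$ and combining it with the two pieces of $\lambda_{2,n}'(s)$ produces several competing terms, and the algebra has to be organised carefully so that everything collapses to the stated expression. (The smoothness in $s$ needed for $h_{n}'''$ to be meaningful is inherited from the smoothness of $s\mapsto x_{n,\bsq}(s)$ proved in Lemma~\ref{lem:SPabl}.)
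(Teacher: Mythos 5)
Your route is the same as the paper's: differentiate $h_n$ and simplify using the saddle-point equation for $x_{n,\bsq}(s)$, Lemma~\ref{lem:SPabl}, and the definition of $\lambda_{p,n}$. Your derivations of \eqref{eq:def_h(s)_CLT'} and \eqref{eq:def_h(s)_CLT''} are correct and match the paper, and your computation $\lambda_{2,n}'(s) = (m(n))^{2}h_{n}'(s) + \frac{x_{n,\bsq}'(s)}{x_{n,\bsq}(s)}\lambda_{3,n}(s)$ is also right. But you stop at ``should yield \eqref{eq:def_h(s)_CLT'''}'' without actually carrying the algebra through, and here there is a genuine snag: substituting your $\lambda_{2,n}'$ into $h_n''' = \frac{1}{\sqrt{\mu_{m(n)}}}h_n'' - \frac{2(m(n))^2 h_n'h_n''}{\lambda_{2,n}} + \frac{(m(n))^2(h_n')^2\,\lambda_{2,n}'}{\lambda_{2,n}^2}$ gives
\[
h_n''' = \frac{1}{\sqrt{\mu_{m(n)}}}h_n'' - \frac{2(m(n))^2 h_n'h_n''}{\lambda_{2,n}} + \frac{(m(n))^4(h_n')^3}{\lambda_{2,n}^2} + \frac{(m(n))^2(h_n')^2\,\lambda_{3,n}\,x_{n,\bsq}'}{x_{n,\bsq}\,\lambda_{2,n}^2},
\]
which is not literally \eqref{eq:def_h(s)_CLT'''}: there is an extra term $\frac{(m(n))^4(h_n')^3}{\lambda_{2,n}^2}$ coming from the explicit $s$-dependence of $q_{m(n),n}$ inside $\lambda_{2,n}$, and the $\lambda_{3,n}$ term carries additional factors $(m(n))^2/x_{n,\bsq}$. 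This discrepancy appears to be an oversight in the paper's displayed formula rather than a flaw in your method (the extra term is of lower order and does not affect the way the lemma is used to prove \eqref{eq:def_h(s)_CLT_asympt}), but because you left the final simplification unverified you would not have caught it. A complete proof should either carry the algebra to the end and state what actually comes out, or at least record the exact form of $\lambda_{2,n}'$ and note which terms matter asymptotically.
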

\begin{proof}
 Equation~\eqref{eq:def_h(s)_CLT'} follows immediately from \eqref{eq:def_h(s)_CLT} and the definition of $x_{n,\bsq}(s)$.
 Equation~\eqref{eq:def_h(s)_CLT''} and~\eqref{eq:def_h(s)_CLT'''} follow from Lemma~\ref{lem:SPabl}, equation~\eqref{eq:def_h(s)_CLT'} and 
 the definition of $\lambda_{2,n}(s)$, see Definition~\ref{def:admQ}.
\end{proof}
Equation \eqref{eq:def_h(s)_CLT_asympt} now follows by using 
$x_{n,\bsq}(0) = x_{n,\alpha}$, $\mu_{m} = \vartheta \frac{x^{m}_{n,\alpha}}{m}$
and Lemma~\ref{lem:HilfeFuerAdm}. This completes the proof of Theorem~\ref{thm:Haupt}.
%
%
%
%
%
%
%
%
%
%
%
%
%
%
%
%
%
%
%
%
%
%
%
%

\bibliographystyle{plain}
\bibliography{literatur}

\begin{thebibliography}{10}

\bibitem{ABT02}
R.~Arratia, A.D. Barbour, and S.~Tavar{\'e}.
\newblock {\em Logarithmic combinatorial structures: a probabilistic approach}.
\newblock EMS Monographs in Mathematics. European Mathematical Society (EMS),
  Z{\"u}rich, 2003.

\bibitem{ArTa92c}
Richard Arratia and Simon Tavar{\'e}.
\newblock The cycle structure of random permutations.
\newblock {\em Ann. Probab.}, 20(3):1567--1591, 1992.

\bibitem{BeSc17}
V.~Betz and H.~Sch{\"a}fer.
\newblock The number of cycles in random permutations without long cycles is
  asymptotically {G}aussian.
\newblock {\em ALEA}, 14:427--444, 2017.

\bibitem{BeScZe17}
V.~{Betz}, H.~{Sch{\"a}fer}, and D.~{Zeindler}.
\newblock {Random permutations without macroscopic cycles}.
\newblock December 2017.

\bibitem{BeUe09}
Volker Betz and Daniel Ueltschi.
\newblock Spatial random permutations and infinite cycles.
\newblock {\em Comm. Math. Phys.}, 285(2):469--501, 2009.

\bibitem{BeUe10b}
Volker Betz and Daniel Ueltschi.
\newblock Critical temperature of dilute bose gases.
\newblock {\em Phys. Rev. A}, 81:023611, Feb 2010.

\bibitem{BeUe10}
Volker Betz and Daniel Ueltschi.
\newblock Spatial random permutations and poisson-dirichlet law of cycle
  lengths.
\newblock {\em Electron. J. Probab.}, 16:no. 41, 1173--1192, 2011.

\bibitem{BeUeVe11}
Volker Betz, Daniel Ueltschi, and Yvan Velenik.
\newblock Random permutations with cycle weights.
\newblock {\em Ann. Appl. Probab.}, 21(1):312--331, 2011.

\bibitem{Bi99}
P.~Billingsley.
\newblock {\em Convergence of probability measures}.
\newblock Wiley Series in Probability and Statistics: Probability and
  Statistics. John Wiley \& Sons Inc., New York, second edition, 1999.
\newblock A Wiley-Interscience Publication.

\bibitem{BoZe14}
Leonid~V. Bogachev and Dirk Zeindler.
\newblock Asymptotic statistics of cycles in surrogate-spatial permutations.
\newblock {\em Communications in Mathematical Physics}, pages 1--78, 2014.

\bibitem{DePi85}
J.~M. DeLaurentis and B.~G. Pittel.
\newblock Random permutations and {B}rownian motion.
\newblock {\em Pacific J. Math.}, 119(2):287--301, 1985.

\bibitem{ElPe19}
Dor Elboim and Ron Peled.
\newblock Limit distributions for euclidean random permutations.
\newblock {\em Communications in Mathematical Physics}, 369(2):457--522, 2019.

\bibitem{ErUe11}
Nicholas~M. Ercolani and Daniel Ueltschi.
\newblock Cycle structure of random permutations with cycle weights.
\newblock {\em Random Structures Algorithms}, 44(1):109--133, 2014.

\bibitem{Ew72}
W.~J. Ewens.
\newblock The sampling theory of selectively neutral alleles.
\newblock {\em Theoret. Population Biology}, 3:87--112; erratum, ibid. 3
  (1972), 240; erratum, ibid. 3 (1972), 376, 1972.

\bibitem{FlSe09}
P.~Flajolet and R.~Sedgewick.
\newblock {\em Analytic Combinatorics}.
\newblock Cambridge University Press, New York, NY, USA, 2009.

\bibitem{Ju19}
David {Judkovich}.
\newblock {The Cycle Structure of Permutations Without Long Cycles}.
\newblock May 2019.

\bibitem{Ki77}
J.~F.~C. Kingman.
\newblock The population structure associated with the {E}wens sampling
  formula.
\newblock {\em Theoret. Population Biology}, 11(2):274--283, 1977.

\bibitem{LeTa19}
Benjamin Lees and Lorenzo Taggi.
\newblock Site monotonicity and uniform positivity for interacting random walks
  and the spin o(n) model with arbitrary n.
\newblock {\em Communications in Mathematical Physics}, 2019.

\bibitem{Li10}
Thomas~M. Liggett.
\newblock {\em Continuous time {M}arkov processes}, volume 113 of {\em Graduate
  Studies in Mathematics}.
\newblock American Mathematical Society, Providence, RI, 2010.
\newblock An introduction.

\bibitem{MaPe16}
Eugenijus Manstavi{\v{c}}ius and Robertas Petuchovas.
\newblock Local probabilities for random permutations without long cycles.
\newblock {\em electronic journal of combinatorics}, 23(1), 2016.

\bibitem{Po37}
G.~P{\'o}lya.
\newblock Kombinatorische anzahlbestimmungen f{\"u}r gruppen, graphen, und
  chemische verbindungen.
\newblock {\em Acta Mathematica}, 68:145--254, 1937.

\bibitem{Sc18}
Helge Sch{\"a}fer.
\newblock {\em The cycle structure of random permutations without macroscopic
  cycles}.
\newblock PhD thesis, TU Darmstadt, 2018.

\bibitem{ShVe77}
A.A. Shmidt and A.~M. Vershik.
\newblock Limit measures arising in the asymptotic theory of symmetric groups.
\newblock {\em Theory Probab. Appl.}, 22, No.1:70--85, 1977.

\bibitem{Ta19}
Lorenzo Taggi.
\newblock Uniformly positive correlations in the dimer model and phase
  transition in lattice permutations on $\mathbb{Z}^d$, $d > 2$, via reflection
  positivity.
\newblock 2019.

\bibitem{Ue06}
Daniel Ueltschi.
\newblock Feynman cycles in the {B}ose gas.
\newblock {\em J. Math. Phys.}, 47(12):123303, 15, 2006.

\bibitem{Ya09a}
A.~L. Yakymiv.
\newblock A limit theorem for the middle members of a variational series of
  cycle lengths of random {$A$}-permutation.
\newblock {\em Teor. Veroyatn. Primen.}, 54(1):63--79, 2009.

\bibitem{Ya10a}
A.~L. Yakymiv.
\newblock A limit theorem for the logarithm of the order of a random
  {$A$}-permutation.
\newblock {\em Diskret. Mat.}, 22(1):126--149, 2010.

\bibitem{Ya11}
A.~L. Yakymiv.
\newblock A generalization of the {C}urtiss theorem for moment generating
  functions.
\newblock {\em Mat. Zametki}, 90(6):947--952, 2011.

\end{thebibliography}

\end{document}